\begin{document}
\input{epsf}

\sloppy

\vspace{3cm}
\title{Construction of elliptic $\frp$-units}

\author{W. Bley and M. Hofer}



\makeatletter
\newtheorem*{rep@theorem}{\rep@title}
\newcommand{\newreptheorem}[2]{%
\newenvironment{rep#1}[1]{%
 \def\rep@title{#2 \ref{##1}}%
 \begin{rep@theorem}}%
 {\end{rep@theorem}}}
\makeatother

\newtheorem{Theorem}{Theorem}[section]
\newtheorem{Lemma}[Theorem]{Lemma}
\newtheorem{Corollary}[Theorem]{Corollary}
\newtheorem{Definition}[Theorem]{Definition}
\newtheorem{Remark}[Theorem]{Remark}
\newtheorem{Remarks}[Theorem]{Remarks}
\newtheorem{Proposition}[Theorem]{Proposition}
\newtheorem{Conjecture}[Theorem]{Conjecture}
\newtheorem{Example}[Theorem]{Example}
\newtheorem{Algorithm}[Theorem]{Algorithm}
\newtheorem{Examples}[Theorem]{Examples}

\newtheorem{Assumption}[Theorem]{Assumption}
\newtheorem{Deflemma}[Theorem]{Definition/Lemma}
\newtheorem{Claim}{Claim}
\newtheorem*{claim_wn}{Claim}
\newtheorem*{Hypotheses}{Hypotheses}
\newtheorem*{theorem_wn}{Main Theorem}

\newreptheorem{theorem}{Theorem}

\newenvironment{theorem}{\begin{Theorem}\sl}{\end{Theorem}}
\newenvironment{corollary}{\begin{Corollary}\sl}{\end{Corollary}}          %
\newenvironment{remark}{\begin{Remark}\rm}{\end{Remark}}
\newenvironment{remarks}{\begin{Remarks}\rm}{\end{Remarks}}
\newenvironment{definition}{\begin{Definition}\rm}{\end{Definition}}
\newenvironment{lemma}{\begin{Lemma}\sl}{\end{Lemma}}
\newenvironment{proposition}{\begin{Proposition}\sl}{\end{Proposition}}
\newenvironment{conjecture}{\begin{Conjecture}\sl}{\end{Conjecture}}
\newenvironment{example}{\begin{Example}}{\end{Example}}
\newenvironment{algorithm}{\begin{Algorithm}}{\end{Algorithm}}
\newenvironment{examples}{\begin{Examples}\rm}{\end{Examples}}
\newenvironment{deflemma}{\begin{Deflemma}\rm}{\end{Deflemma}}
\newenvironment{assumption}{\begin{Assumption}\rm}{\end{Assumption}}
\newenvironment{claim}{\begin{Claim}\rm}{\end{Claim}}
\newenvironment{hypotheses}{\begin{Hypotheses}\rm}{\end{Hypotheses}}

\newcommand{\A}{\mathbf A}
\newcommand{\ahat}{\hat{a}}
\newcommand{\B}{\mathbf B}
\newcommand{\C}{\mathbf C}
\newcommand{\calA}{\mathcal{A}}
\newcommand{\calB}{\mathcal{B}}
\newcommand{\calC}{\mathcal{C}}
\newcommand{\calD}{\mathcal{D}}
\newcommand{\calE}{\mathcal{E}}
\newcommand{\calF}{\mathcal{F}}
\newcommand{\calG}{\mathcal{G}}
\newcommand{\calH}{\mathcal{H}}
\newcommand{\calJ}{\mathcal{J}}
\newcommand{\calK}{\mathcal{K}}
\newcommand{\calL}{\mathcal{L}}
\newcommand{\calM}{\mathcal{M}}
\newcommand{\calN}{\mathcal{N}}
\newcommand{\calP}{\mathcal{P}}
\newcommand{\calO}{\mathcal{O}}
\newcommand{\calR}{\mathcal{R}}
\newcommand{\calS}{\mathcal{S}}
\newcommand{\calU}{\mathcal{U}}
\newcommand{\calV}{\mathcal{V}}
\newcommand{\calX}{\mathcal{X}}
\newcommand{\End}{\mathrm{End}}
\newcommand{\Fbar}{\bar{F}}
\newcommand{\frA}{\mathfrak{A}}
\newcommand{\fra}{\mathfrak{a}}
\newcommand{\frb}{\mathfrak{b}}
\newcommand{\frc}{\mathfrak{c}}
\newcommand{\frd}{\mathfrak{d}}
\newcommand{\frf}{\mathfrak{f}}
\newcommand{\frg}{\mathfrak{g}}
\newcommand{\frl}{\mathfrak{l}}
\newcommand{\frm}{\mathfrak{m}}
\newcommand{\frn}{\mathfrak{n}}
\newcommand{\frt}{\mathfrak{t}}
\newcommand{\frP}{\mathfrak{P}}
\newcommand{\frp}{\mathfrak{p}}
\newcommand{\frQ}{\mathfrak{Q}}
\newcommand{\frq}{\mathfrak{q}}
\newcommand{\frw}{\mathfrak{w}}
\newcommand{\frJ}{\mathfrak{J}}
\newcommand{\frW}{\mathfrak{W}}
\newcommand{\Gal}{\mathrm{Gal}}
\newcommand{\Ghat}{\hat{G}}
\newcommand{\Hom}{\mathrm{Hom}}
\newcommand{\Aut}{\mathrm{Aut}}
\newcommand{\Kbar}{{\bar{K}}}
\newcommand{\Lbar}{{\bar{L}}}
\newcommand{\Mbar}{{\bar{M}}}
\newcommand{\Ltil}{{\tilde{L}}}
\newcommand{\lexp}[2]{\setbox0=\hbox{$#2$} \setbox1=\vbox to
         \ht0{}\,\box1^{#1}\!#2}
\newcommand{\Map}{\mathrm{Map}}
\newcommand{\Na}{\mathbb{N}}
\newcommand{\Om}{\Omega}
\newcommand{\OmL}{\Omega_L}
\newcommand{\om}{\omega}
\newcommand{\pringkp}{\calO_{k'}[[X]]}
\newcommand{\pip}{\pi^\prime}
\newcommand{\pipm}{{\pi^{\prime (m)}}}
\newcommand{\Ags}{\mathcal{A}^{\mathrm{gs}}}
\newcommand{\ALgs}{\mathcal{A}_L^{\mathrm{gs}}}
\newcommand{\Bgs}{\mathcal{B}^{\mathrm{gs}}}
\newcommand{\BLgs}{\mathcal{B}_L^{\mathrm{gs}}}
\newcommand{\TrBK}{\mathrm{Tr}_{B/K}}
\newcommand{\ev}{\mathrm{ev}}
\newcommand{\gdw}{\mathop{\Longleftrightarrow}}
\newcommand{\Qu}{\mathbb{Q}}
\newcommand{\Qbar}{\bar{\Qu}}
\newcommand{\stab}{\mathrm{stab}}
\newcommand{\res}{\mathrm{res}}
\newcommand{\N}{\calN}
\newcommand{\cl}{\mathrm{cl}}
\newcommand{\Tr}{\mathrm{Tr}}
\newcommand{\Ze}{\mathbb{Z}}
\newcommand{\Ce}{\mathbb{C}}
\renewcommand{\C}{\mathbb{C}}
\renewcommand{\Re}{\mathbb{R}}
\renewcommand{\O}{ {\calO} }
\newcommand{\Ok}{{\calO_k}}
\newcommand{\OF}{{\calO_F}}
\newcommand{\OK}{\calO_k}
\newcommand{\OFp}{{\calO_{\tilde F}}}
\newcommand{\OL}{{\calO_L}}
\newcommand{\OLw}{{\calO_{L_w}}}
\renewcommand{\OE}{{\calO_E}}
\newcommand{\OLp}{{\calO_{L'}}}
\newcommand{\OM}{{\calO_M}}
\newcommand{\ON}{{\calO_N}}
\newcommand{\OP}{{\calO_{P}}}
\newcommand{\Okfrp}{{\calO_{k_\frp}}}
\newcommand{\OPtilde}{{\tilde\calO_{P}}}
\newcommand{\Op}{{\calO'}}
\newcommand{\OpX}{{\calO'[[X]]}}
\newcommand{\mpar}{\marginpar}
\newcommand{\Etor}{{E_{\mathrm{tor}}}}
\newcommand{\lcm}{\mathrm{lcm}}
\newcommand{\Sl}{ {\mathrm{Sl}_2(\Ze)} }
\newcommand{\Arf}[1]{\stackrel{#1}{\longrightarrow}}
\newcommand{\lto}{\longmapsto}
\newcommand{\zpr}{ \zeta_{p^r} }
\newcommand{\zpri}{ \zeta_{p_i^{r_i}} }
\newcommand{\zpm}{ \zeta_{p^m} }
\newcommand{\zprm}{ \zeta_{p^{r+m}} }
\newcommand{\lat}{\frw}
\newcommand{\lathat}{{\hat{\lat}}}
\newcommand{\Lhat}{ {\hat{L} }}
\renewcommand{\mod}{\mathrm{mod\ }}
\newcommand{\chiell}{\chi_{\mathrm{ell}}}
\gdef\star#1{ \,^*#1 }

\newcommand{\R}{ \mathbb{R} }
\newcommand{\Q}{ \mathbb{Q} }
\renewcommand{\Im}{ \mathrm{Im} }

\gdef \fctb#1#2#3#4{
     #1 \left( #2 \left\vert {#3 \atop #4} \right. \right) }

\gdef \fcta#1#2#3{
     #1 \left( #2 \left\vert {#3} \right. \right) }

\gdef \th#1{ \theta_0( {#1}, \fra ) }
\gdef \thJ{ \theta_0( {\tau_J}, \fra ) }
\gdef \sgal#1#2{ \sum_{g \in \Gal(k(#1)/k(#2))} }
\gdef \sgalk#1{ \sum_{g \in \Gal(k(#1)/k)} }

\newcommand{\charpol}{ {\mathrm{char}} }
\newcommand{\CG}{ {\Ce [G]} }
\newcommand{\ZG}{ {\Ze [G]} }
\newcommand{\ZGv}{ {\Ze [G_v]} }
\newcommand{\QGv}{ {\Qu [G_v]} }
\newcommand{\CHa}{ {\cal{C}_\mathrm{Ha}} }
\newcommand{\Cl}{ {\Ce_l} }
\newcommand{\Cp}{ {\Ce_p} }
\newcommand{\CN}{ {\calC_N} }
\newcommand{\DS}{ {\Delta S} }
\newcommand{\diff}{\frd_k}
\newcommand{\im}{\text{im}}
\newcommand{\ind}{\text{ind}}
\newcommand{\infl}{\text{infl}}
\newcommand{\indGp}{\text{ind}_{G_\frP}^G}
\newcommand{\lOF}{\ell_\OF}
\newcommand{\lmt}{\longmapsto}
\newcommand{\lra}{\longrightarrow}
\newcommand{\la}{\leftarrow}
\newcommand{\nI}{ {\frn_I} }
\newcommand{\nJ}{ {\frn_J} }
\newcommand{\Oprime}{ {\calO'} }
\newcommand{\Ol}{{\calO_l}}
\newcommand{\OS}{{\calO_S}}
\newcommand{\OLS}{\calO_{L,S}}
\newcommand{\ULS}{U_{L,S}}
\newcommand{\ptJ}{\varphi^{12N(\frb\nJ)}(\tau_J \mid L)}
\newcommand{\QG}{ {\Qu [G]} }
\newcommand{\QlG}{ {\Qu_l [G]} }
\newcommand{\RG}{ {\Re [G]} }
\newcommand{\Ql}{ {\Qu_l} }
\newcommand{\Qp}{ {\Qu_p} }
\newcommand{\QpG}{ {\Qu_p[G]} }
\newcommand{\Zp}{ {\Ze_p} }
\newcommand{\ZpG}{ {\Ze_p[G]} }
\newcommand{\ra}{\rightarrow}
\newcommand{\Sinf}{S_\infty}
\newcommand{\SRam}{ S^\mathrm{ram} }
\newcommand{\Sram}{ S_\mathrm{ram} }
\newcommand{\tensor}{ {\otimes} }
\newcommand{\tensorZ}{ {\otimes_\Ze} }
\newcommand{\tensorQ}{ {\otimes_\Qu} }
\newcommand{\ZS}{\Ze S}
\newcommand{\Zl}{ {\Ze_l} }
\newcommand{\ZlG}{ {\Ze_l[G]} }
\newcommand{\ZD}{ {\Ze[D]} }
\newcommand{\QD}{ {\Qu[D]} }
\newcommand{\uphi}{\underline{\varphi}}
\newcommand{\Vcheck}{ \check{V} }
\newcommand{\TO}{ T\Omega(L/K) }
\newcommand{\TpO}{ T_p\Omega(L/K) }
\newcommand{\TOa}{ T\Omega_\alpha }
\newcommand{\Ext}{ \mathrm{Ext} }
\newcommand{\bfD}{ \bf{D} }
\newcommand{\D}{ \calD }
\newcommand{\IsEG}{ \mathrm{Is}_{E [\Gamma]} }
\newcommand{\IsRG}{ \mathrm{Is}_{\Re [G]} }
\newcommand{\PsiSR}{ \Psi_{S, \Re} }
\newcommand{\XSR}{ X_{S, \Re} }
\newcommand{\USR}{ U_{S, \Re} }
\newcommand{\Fitt}{ \mathrm{Fitt} }
\renewcommand{\det}{ \mathrm{det} }
\newcommand{\cok}{ \mathrm{cok} }
\newcommand{\Det}{ \mathrm{Det} }
\newcommand{\Spec}{ \mathrm{Spec} }
\newcommand{\ext}{ \mathrm{Ext}^2_G(X_S, U_S) }
\newcommand{\pext}{ \mathrm{CI}^2_G(X_S, U_S) }
\newcommand{\hext}{ \mathrm{Hom}_G(\Sigma,U_S) }
\newcommand{\hextn}{ \mathrm{Hom}_G(\Sigma,N) }
\newcommand{\ppext}{ \mathrm{CI}_G(\Sigma,U_S) }
\newcommand{\extn}{ \mathrm{Ext}^2_G(X_S, N) }
\newcommand{\ppextn}{ \mathrm{CI}_G(\Sigma,N) }
\newcommand{\extbar}{ \mathrm{Ext}^2_G(X_S,\bar{U}_S) }
\newcommand{\pextbar}{ \mathrm{CI}^2_G(X_S,\bar{U}_S) }
\newcommand{\ppextbar}{ \mathrm{CI}_G(\Sigma,\bar{U}) }
\newcommand{\Krel}{ K_0(\ZG, \Qu) }
\newcommand{\USbar}{ \bar{U} }
\newcommand{\XSmtwo}{ X_S(-2) }
\newcommand{\connect}{ \delta_{G} }
\newcommand{\connectl}{ \delta_{G,l} }
\newcommand{\Whe}{ \mathrm{Whe}_G }
\newcommand{\bWhe}{ \mathrm{\bf Whe}_G }
\newcommand{\bHom}{ \mathrm{\bf Hom} }
\newcommand{\Pic}{ \mathrm{Pic} }
\newcommand{\IsA}{ \mathrm{Is}_A }
\newcommand{\underscore}{ \ \_\   }
\newcommand{\pinfty}{ \frp_\infty }
\newcommand{\zla}{ \zeta_{l^a} }
\gdef \zl#1#2{ \zeta_{l_#1^#2} }
\newcommand{\vareps}{ \varepsilon }
\newcommand{\eps}{ \varepsilon }
\newcommand{\bfzero}{ {\bf 0} }
\newcommand{\ZpT}{ {\Zp[[T]]} }
\newcommand{\pd}{ {\mathrm{pd}} }
\newcommand{\Ipos}{ f_{\omega\psi^{-1}}(T) }
\newcommand{\ZlonexiG}{ {\Ze_{l}(\xi)[G_{l}]} }
\newcommand{\ord}{ \mathrm{ord} }
\newcommand{\Punkt}{ {\bf \cdot} }
\newcommand{\ETNC}{ETNC }
\newcommand{\Gl}{\mathrm{Gl}}
\newcommand{\Mat}{\mathrm{Mat}}
\newcommand{\US}{U_S}
\newcommand{\ES}{\calE_S}
\newcommand{\UKS}{U_{K,S}}
\newcommand{\rk}{ \mathrm{rk} }
\newcommand{\SL}{ {\Sigma(L)} }
\newcommand{\SK}{ {\Sigma(K)} }
\newcommand{\piLK}{ {\pi_{L/K} } }
\newcommand{\xiLK}{ {\xi_{L/K} } }
\newcommand{\tauLK}{ {\tau_{L/K} } }
\newcommand{\HL}{ {H_L} }
\newcommand{\HLZ}{ {H_{L, \Ze}}}
\newcommand{\HLK}{ {H_{L/K}}}
\newcommand{\HLKZ}{ {H_{L/K, \Ze}}}
\newcommand{\WLK}{ {{\cal W}(L/K)} }
\newcommand{\OKG}{ {{\cal O}_K [G]} }
\newcommand{\OKvG}{ {{\cal O}_{K_v} [G]} }
\newcommand{\OKvGv}{ {{\cal O}_{K_v} [G_v]} }
\newcommand{\Gm}{ {{\bf G}_m} }
\newcommand{\KwL}{ {K_w^\bullet(\calL)} }
\newcommand{\PwL}{ {\Psi_w^\bullet(\calL)} }
\newcommand{\IvL}{ {I(v, \calL)} }
\newcommand{\Epsilon}{ {\calE} }
\newcommand{\Xv}{ {\calX_{(v)}} }
\newcommand{\Xvp}{ {\calX_{(v)}'} }
\newcommand{\cbphi}{\cok(\bar\varphi)}
\newcommand{\trans}{\tilde{\ }}
\newcommand{\tL}{\tilde{L}}
\newcommand{\Qc}{\Qu^c}
\newcommand{\Qpc}{\Qu_p^c}
\newcommand{\Ehat}{\hat{E}}
\newcommand{\bnh}{\hat{\beta}_n}
\newcommand{\bih}{\hat{\beta}_i}
\newcommand{\bjh}{\hat{\beta}_j}
\newcommand{\kinh}{\hat{\kappa}_{i,n}}
\newcommand{\kjnh}{\hat{\kappa}_{j,n}}
\newcommand{\kimh}{\hat{\kappa}_{i,m}}
\newcommand{\kxi}{k_{\xi,i}}
\newcommand{\kxn}{k_{\xi,n}}
\newcommand{\kxnull}{k_{\xi,0}}
\newcommand{\ahme}{\fra^{-1}}
\newcommand{\name}{(\N(\fra) - 1)}
\newcommand{\Lval}{L_S^*(\chi^{-1})}
\newcommand{\chimn}{\chi(\mu\nu)}
\newcommand{\qb}{\frq^{y+1}}
\newcommand{\pa}{\frp^{x+1}}
\gdef \projlim#1{\lim\limits_{\stackrel\la #1} }
\gdef\F#1{ {\hat{F^\prime_{#1}}} }


\newcommand{\etatwo}{\eta^{(2)}}
\newcommand{\omtl}{\widetilde{\omega}}
\newcommand{\Lul}{\underline{L}}
\newcommand{\Lp}{L^{\prime}}
\newcommand{\Lulp}{\underline{L}^{\prime}}
\newcommand{\omul}{\underline{\omega}}
\newcommand{\ul}[1]{\underline{#1}}
\newcommand{\Z}{\mathbb{Z}}

\renewcommand{\Cl}{Cl}
\renewcommand{\ll}{\left \langle}
\newcommand{\rl}{\right \rangle}

\newcommand{\Rec}{\mathrm{Rec}}
\newcommand{\rec}{\mathrm{rec}}
\newcommand{\Ord}{\mathrm{Ord}}
\newcommand{\nuL}{\tilde{\nu}}
\newcommand{\val}{val}
\newcommand{\Col}{Col}

\newcommand{\oll}[1]{\overline{ \overline{#1}}}
\newcommand{\ol}[1]{\overline{#1}}

\newcommand{\betapp}{\beta^{\prime \prime}}
\newcommand{\sigmapp}{\sigma^{\prime \prime}}
\newcommand{\betap}{\beta^{\prime }}
\newcommand{\sigmap}{\sigma^{\prime}}

\newcommand{\prebeta}{\tilde{\beta}}

\newcommand{\w}{w}
\newcommand{\vp}{w^{\prime}}
\newcommand{\vpp}{w^{\prime \prime}}
\newcommand{\prevpp}{\widetilde{w^{\prime \prime}}}

\newcommand{\Kinf}{K_{\infty}}
\newcommand{\Kn}{K_{n}}
\newcommand{\Km}{K_{m}}
\newcommand{\Konen}{K_{1,n}}
\newcommand{\Kponen}{K^{\prime}_{1,n}}
\newcommand{\Lonen}{L_{1,n}}
\newcommand{\Kzero}{K_{0}}
\newcommand{\Kp}{K^{\prime}}
\renewcommand{\L}{L}

\newcommand{\locKinf}{H_{\infty}}

\newcommand{\locKoneinf}{H_{1,\infty}}
\newcommand{\locKponeinf}{H^{\prime}_{1,\infty}}
\newcommand{\locLoneinf}{M_{1,\infty}}

\newcommand{\locKtwoinf}{H_{2,\infty}}
\newcommand{\locKptwoinf}{H^{\prime}_{2,\infty}}
\newcommand{\locLtwoinf}{M_{2,\infty}}

\newcommand{\NC}{\mathcal{N}}
\newcommand{\locKonen}{H_{1,n}}
\newcommand{\locKponen}{H^{\prime}_{1,n}}
\newcommand{\locLonen}{M_{1,n}}
\newcommand{\locLin}{M_{i,n}}

\newcommand{\locKpin}{H^{\prime}_{i,n}}
\newcommand{\locKpjn}{H^{\prime}_{j,n}}
\newcommand{\locKin}{H_{i,n}}
\newcommand{\locKjn}{H_{j,n}}

\newcommand{\locKtwon}{H_{2,n}}
\newcommand{\locKptwon}{H^{\prime}_{2,n}}
\newcommand{\locLtwon}{M_{2,n}}

\newcommand{\locKn}{H_{n}}
\newcommand{\locKm}{H_{m}}
\newcommand{\locFn}{F_{n}}
\newcommand{\locFi}{F_{i}}
\newcommand{\locKi}{H_{i}}
\newcommand{\locKzero}{H_{0}}
\newcommand{\locKp}{H^{\prime}}
\newcommand{\locL}{H}

\newcommand{\locF}{F}
\newcommand{\locFp}{F^{\prime}}
\newcommand{\locFpp}{F^{\prime \prime}}
\newcommand{\locFponen}{F^{\prime}_{1,n}}
\newcommand{\locFpin}{F^{\prime}_{i,n}}
\newcommand{\locFpponen}{F^{\prime \prime}_{1,n}}
\newcommand{\locFppin}{F^{\prime \prime}_{i,n}}
\newcommand{\locFpn}{F^{\prime}_{n}}
\newcommand{\locFppn}{F^{\prime \prime}_{n}}

\newcommand{\locLchin}{H^{\prime}_{\chi_{i,n}}}
\newcommand{\locLchi}{H_{\chi}}

\newcommand{\localbeta}{\tilde{\beta}_{i,n}}
\newcommand{\locbeta}{\tilde{\beta}_{i,n}}
\newcommand{\locbetap}{\tilde{\beta^{\prime}}_{i,n}}
\newcommand{\lockappa}{\tilde{\kappa}_{i,n}}
\newcommand{\betad}{(\beta^{\circ})}

\newcommand{\Knr}{{K_{\mathrm{nr}}}}
\newcommand{\Knrtimes}{{K_{\mathrm{nr}}^\times}}
\newcommand{\Lnr}{{L_{\mathrm{nr}}}}
\newcommand{\Lnrtimes}{{L_{\mathrm{nr}}^\times}}

\newcommand{\blue}{\textcolor{blue}}
\newcommand{\red}{\textcolor{red}}

\maketitle
\begin{abstract}
Let $L/k$ be a finite abelian extension of an imaginary quadratic number field
$k$. Let $\frp$ denote a prime ideal of $\Ok$ lying over the rational prime $p$.
We assume that $\frp$ splits completely in $L/k$ and that $p$ does not divide the class number of $k$.
If $p$ is split in $k/\Qu$ the first named author has adapted a construction of Solomon to obtain elliptic $\frp$-units in $L$.
In this paper we generalize this construction to the non-split case and obtain in this way a pair of elliptic $\frp$-units
depending on a choice of generators of a certain Iwasawa algebra (which here is of rank 2).
In our main result we express the $\frp$-adic valuations of these $\frp$-units in terms of the $p$-adic logarithm of an explicit elliptic unit.
The crucial  input for the proof of our main result is the computation of the constant term of a suitable Coleman power series, where we rely on recent work of T. Seiriki.
\end{abstract}

\noindent
{\bf MS Classification 2010:} 11R27, 11G16, 11G15
\section{Introduction}
\label{sec_intro}

The principal motivation for this paper is the study of the equivariant Tamagawa Number Conjecture
(short eTNC) for abelian extensions of an imaginary quadratic field $k$. We recall that the eTNC is proved for
abelian extensions of $\Qu$ by important work of Burns and Greither \cite{BuGr03} complemented by work of 
Flach \cite{Fl04, Fl11} who dealt with the $2$-part. We also refer the reader to \cite{Fl04} for a 
survey of the strategy as well as for details of the proof. 

For abelian extensions of an imaginary quadratic field $k$ the first named author in \cite{Bl06} adapted the proof of Burns and Greither
as presented in Flach's survey \cite{Fl04} to prove the $p$-part of the eTNC for primes $p \ge 3$
\footnote{The proof in \cite{Bl06} uses a result of Gillard 
which was at that time only proved for $p \ge 5$. However, Oukhaba and Vigui\'{e} {\cite{OuVi16}} in the meantime proved Gillard's result also in the cases $p=2,3$.}
which are split in $k/\Qu$ and coprime to the class number $h_k$ of $k$. 

The strategy of proving these results is as follows: one first proves an equivariant Iwasawa main conjecture and then 
descents to finite levels by taking coinvariants. This technique is often very complicated because of a 'trivial zeros phenomenon'. 
In this context we also refer
the reader to recent work of Burns, Kurihara and Sano \cite{BKS2}, where they describe a concrete strategy for proving special cases of the eTNC.
In particular, they formulate an Iwasawa theoretic version of a conjecture of Mazur and Rubin in \cite{MaRu16} and independently of Sano in \cite{Sa14_Comp}.
This conjecture \cite[Conj.~4.2]{BKS2} holds in the 'case of no trivial zeros' (see \cite[Cor.~4.6]{BKS2}) and can be shown for 
abelian extensions of $\Qu$ as a consequence of the main result of Solomon in \cite[Thm.~2.1]{So92}. 
Burns, Kurihara and Sano use the conjectured validity of \cite[Conj.~4.2]{BKS2} as a main tool in their descent computations.

Much earlier, Burns and Greither applied Solomon's result directly for the descent in the 'trivial zeros case' for abelian extensions of $\Qu$. 
A result similar to Solomon's has been proved by the first named author in \cite{Bl04} for abelian extensions of an imaginary quadratic field $k$
and split primes $p$ not dividing the class number $h_k$. As in the case of abelian extensions of $\Qu$, this result  was then used to perform the descent
in the {'trivial zeros case'} and is therefore a crucial ingredient in the proof of the $p$-part of the eTNC in \cite{Bl06}.

In this manuscript we consider rational primes $p$ which do not split in the imaginary quadratic field $k$. 
Let $\frp$ denote the prime ideal of $k$ lying over $p$. 
In the rest of the introduction we briefly describe our main result for primes $p \ge 5$. For the general result see 
Section \ref{sec_formulationmainthm}, in particular, Theorem \ref{thm_main}.

For any integral ideal $\frm$ of $k$ we write $k(\frm)$ for the ray class field modulo $\frm$.
Then, for any  non-trivial integral ideal $\frf$ of $k$, the extension  $k(\frf \frp^\infty)/k(\frf\frp)$ is a rank two $\Zp$-extension and it is not at all
clear what a natural generalization of Solomon's construction could be.

Let $L/k$ be a finite abelian extension and assume that $\frp$ splits completely in $L/k$. We let $\frf$ denote a multiple of the conductor
$\frf_L$ of $L$ which is prime to $\frp$ and also satisfies the assumption that the natural map $\O_k^\times \lra (\O_k/\frf)^\times$ is injective. 

We write $k_{\frp}$ for the completion of $k$ at $\frp$. If $\Gamma$ denotes the Galois group of $k(\frf\frp^\infty)/k(\frf\frp)$, the 
global Artin map induces an isomorphism $1 + \frp\O_{k_{\frp}} \stackrel{\simeq}\lra \Gamma $ and we write 
\hbox{$\chiell \colon \Gamma  \stackrel{\simeq}\lra  1 + \frp\O_{k_{\frp}}$} for
its inverse. The $p$-adic logarithm defines an isomorphism \hbox{$1 + \frp\O_{k_{\frp}}  \simeq \frp\O_{k_{\frp}}$}, so that $\Gamma$ is
free of rank $2$ over $\Zp$ since, by assumption,
$p$ is non-split in $k/\Qu$. We fix topological generators $\gamma_1, \gamma_2$ of $\Gamma$ 
and set $\omega_i = \omega_i(\gamma_i) := \log_p(\chiell(\gamma_i))$  for $i \in \{1,2 \}$.  Then the set $\{ \omega_1, \omega_2 \}$ is a $\Qp$-basis of $k_\frp$ and we write
$\pi_{\omega_i} \colon k_{\frp} \lra \Qp$ for the projection to the $\omega_i$-component.

For an integral ideal $\frm$ we will use special values of an elliptic function defined by Robert in \cite{Ro90} 
to define elliptic units $\psi_\frm$ in the ray class field $k(\frm)$ (for a proper definition see Section~\ref{subsec_ellipticunits}).
These elliptic units satisfy the usual norm relations, so that a construction motivated by Solomon's work gives rise to a pair of  
elliptic $\frp$-units  $\kappa_1(\gamma_1), \kappa_2(\gamma_2)$ which depend
upon our fixed choice of generators. The precise definition of $\kappa_i(\gamma_i)$ will be given in Section \ref{sec p-units}.

Let $S_\frp = S_\frp(L)$ denote the set of primes of $L$ above $\frp$  and  write $\O_{L, S_{\frp}}^\times$ for the group of $S_\frp$-units of $L$.
Our elements  $\kappa_1(\gamma_1), \kappa_2(\gamma_2)$ are actually not elements in $\O_{L, S_{\frp}}^\times$, but are contained in $\O_{L, S_{\frp}}^\times \tensor_\Ze \Zp$.
Let $v_\frP$ denote the normalized valuation of $L$ at $\frP$, but we also write  $v_\frP \colon \O_{L, S_{\frp}}^\times \tensor_\Ze \Zp \lra \Zp$ for the $p$-completion of $v_\frP$.

Since $\frp$ splits completely in $L/k$, each $\frP \in S_\frp$
corresponds to a unique embedding $j_\frP$ of $L$ into $k_\frp$. 

We are finally in a position to formulate our main result. 
The theorem is restated in Theorem \ref{thm_main} where our partial results for  $p=2,3$ are included.

\begin{theorem_wn}\label{thm_intro}
Let $p \ge 5$ be a prime that does not split in $k / \Qu$ and let $\frp$ be the prime ideal of $k$ above $p$. 
Let $L$ be a finite abelian extension of $k$ and assume that $\frp$ splits completely in $L/k$. In addition, we assume
that $p$ does not divide the class number $h_k$. 
Then we have the following equality in $\Zp$ for each $\frP \in S_{\frp}$:
\begin{eqnarray*}
\pi_{\omega_{i}}\left( \log_p(j_{\frP}(\N_{k(\frf)/L} (\psi_{\frf}))) \right) = v_\frP (\kappa_{j}),
\end{eqnarray*}
where  $i,j \in \{1,2\}$, $i \ne j$.
\end{theorem_wn}

\begin{remark}
  The hypothesis that $\frp$ splits completely in $L/k$ is no obstruction for the intended applications to the eTNC
since in the 'trivial zeros case' this condition is automatically fulfilled.
\end{remark}

We briefly describe the structure of the proof of Theorem \ref{thm_main}. We set \hbox{$F:=k(\frf)$}. 
By the theory of complex multiplication there exists an elliptic curve $E$ over $F$ such that $E$ has complex multiplication 
by $\O_{k}$ and $F(E_{tor})/k$ is abelian. The associated formal group $\hat{E}$ is a relative Lubin-Tate  group and the elliptic units
$u_n := \psi_{\frf\frp^{n+1}}$ define a norm-coherent sequence $u = (u_n)_{n \ge 0}$ of units in the Iwasawa tower of Lubin-Tate extensions. 
The essential input for the proof of Theorem \ref{thm_main} is the determination of the constant term $\Col_u(0)$ of the 
associated Coleman power series $\Col_u$.

In the situation of \cite{Bl04},  where $p$ is split, the formal group $\hat{E}$ is of height $1$ (defined over $\Qp$) and one could
to a large extend follow Solomon's approach replacing the multiplicative group $\mathbb{G}_m$ by $\hat{E}$. 
If $p$ does not split, $\hat{E}$ is of height $2$ defined over the quadratic extension $k_\frp$ of $\Qp$ and the situation is more complicated.

For the computation of the constant term $\Col_{u}(0)$ we now apply a recent result of T. Seiriki in \cite{Sei15}, who describes  $\Col_u(0)$ in terms of a pairing 
whose definition is closely related to Solomon's construction. For the convenience of the reader we give a
self-contained proof of Seiriki's result in Section \ref{sec_constantterm}. 
We follow entirely his strategy but adapt some of his arguments and fill in details
wherever we felt that this is necessary.

In future work the second named author will study the relation of our main result to the Iwasawa theoretic Mazur-Rubin-Sano conjecture \cite[Conj.~4.2]{BKS2} and the relation to the eTNC.

The authors want to take the opportunity to express their gratitude for the wonderful organization of Iwasawa 2017.
The first named author also wants to thank the organizers of Iwasawa 2017 for the invitation to give a talk at the conference. 
He also wants to thank Cristian Popescu for many fruitful discussions on the subject of this paper.

\paragraph{Notation}

For any field $F$ we write $F^{c}$ for a fixed algebraic closure of $F$. For a { finite extension $E/F$ of fields we write $\N_{E/F}$ for the field theoretic norm.}
For a number field (resp. $p$-adic field) $F$ we write $\O_F$ for the ring of integers (resp. the valuation ring). 
If $F$ is a $p$-adic field, we let $\frp_F$ denote the maximal ideal of $\O_F$.

Let $k$ {always} denote an imaginary quadratic field.  If $\frf$ is an ideal of $\O_{k}$, we write $k(\frf)$ for the ray class field of conductor $\frf$. 
In particular, $k(1)$ is the Hilbert class field. We let $w(\frf)$ denote the number of roots of unity congruent to $1$ modulo $\frf$. 
Hence $w(1)$ equals the number $w_{k}$ of roots of unity in $k$. Moreover, we use the notation $\N(\frf)$ for the ideal norm. 
  If $F/k$ is an abelian extension we write $\frf_{F}$ for the conductor of $F$. For an integral ideal $\frc$ 
which is relatively prime to $\frf_{F}$, 
we write  $\sigma(\frc)$ or $\sigma_{\frc}$ or $(\frc, F/k)$ for the associated Artin automorphism.

Let $p$ be a prime. If $H$ is a finite extension of $\Qp$, we write $\rec_H \colon H^\times \lra \Gal(H^{ab}/H)$ for the reciprocity map of local class field theory, { where }$H^{ab}$ denotes the maximal abelian extension of $H$ in $H^c$.
Moreover, we denote by $\log_{p}$ the Iwasawa logarithm normalized such that $\log_{p}(p)=0$.

\section{Formulation of the Main Theorem}
\label{sec_formulationmainthm}

\subsection{Elliptic units}
\label{subsec_ellipticunits}

In this subsection we introduce the notion of elliptic units that we will use in this paper. Our elliptic units are special values of
a meromorphic function $\psi$ which was introduced and studied by Robert in \cite{Ro90, Ro92}. 

 We let  $L \subseteq \Ce$ denote a $\Ze$-lattice of rank $2$ with complex multiplication by $\Ok$. 
For any integral  $\Ok$-ideal  $\fra$ satisfying $(\N(\fra),6)=1$ we define a meromorphic function
 \[
  \psi(z; L , \fra):= \tilde{F}(z; L, \fra^{-1} L), \quad z \in \Ce,
 \]
where $\tilde{F}$ is defined in \cite[Th\'eor\`eme principal, (15)]{Ro90}.
This function $\psi$ is closely related to the function $\Theta_{0}(z;\fra)$ used by Rubin in \cite[Appendix]{Ru87} 
and it is the $12$-th root of the function $\Theta(z;L,\fra)$ defined in
\cite[Ch.~II.2]{dS87}. 

Let $\frm$ denote an integral ideal  such that  $(\frm , \fra)=1$.
{Then the element $\psi_\frm$ used in the introduction is defined by $ \psi_\frm := \psi(1; \frm , \fra)$.}

We refer the reader to \cite[Sec.~2]{Bl04}, where the basic properties of these elliptic units are stated and proved. In particular,
we recall that special values of $\psi$ satisfy norm relations analogous to those satisfied by cyclotomic units (see \cite[Thm.~2.3]{Bl04}).

\subsection{Construction of elliptic $\frp$-units}\label{sec p-units}

Let $L$ denote a finite abelian extension of the imaginary quadratic field $k$. We fix a prime ideal $\frp$ of $\Ok$ above a rational prime $p$. 
We write $H$ for the completion $k_\frp$ of $k$ at $\frp$. For primes $p \ge 5$ we will have no further assumptions on
$\frp$ and $k$ besides hypotheses (H1) and (H2) below, however, for $p=2$ or $p=3$ we need to impose the following conditions.

\begin{itemize}
\item If $p=2$ we assume that either a2) or b2) holds:
  \begin{itemize}
  \item [a2)] $p$ is split in $k$.
  \item [b2)] $p$ is ramified in $k$ and $H = \Qu_2(\zeta_4)$, where $\zeta_4$ is a primitive fourth root of unity in $\Qu_2^c$.
  \end{itemize}
\item If $p=3$ we assume that either a3), b3) or c3) holds:
  \begin{itemize}
  \item [a3)] $p$ is split in $k$.
  \item [b3)] $p$ is inert in $k$. 
  \item [c3)] $p$ is ramified in $k$ and $H = \Qu_3(\zeta_3)$, where $\zeta_3$ is a primitive third root of unity in $\Qu_3^c$.
  \end{itemize}
\end{itemize}

We summarize these conditions in  the following table. Here the integer $s$ is defined to be the smallest integer such that $s > e/(p-1)$ with $e$ denoting
the ramification index of $p$ in $k/\Qu$. 
\[
\centering
  \begin{tabular}{c|c|c|c}
    & split & inert & ramified \\
\hline
2 & $s=2$ & --- & $H = \Qu_2(\zeta_4)$, $s=3$ \\
\hline
3 & $s=1$ & $s=1$ & $H = \Qu_3(\zeta_3)$, $s=2$ 
  \end{tabular}
\]
\pagebreak

Our main hypotheses are as follows.

\begin{hypotheses}
\label{assumption_inert_ray}
\leavevmode
\begin{enumerate}
\item[(H1)] $\frp$ splits completely in $L$.
\item[(H2)] $p$ does not divide the class number of $k$.
\end{enumerate}
\end{hypotheses}

Let $\frf_L$ be the conductor of $L$ and fix an integral ideal $\frf$ of $\Ok$ such that $\frf_{L} \mid \frf$, $\frp \nmid \frf$ and $w(\frf)=1$.
Note that for $p \ge 5$ we have $s = 1$. 
We set  
\begin{align*}
F:= k(\frf),   \quad k(\frp^\infty) :=  \bigcup_{n\ge 0} k(\frp^{n}) \quad\text{ and }\quad K_\infty :=  \bigcup_{n\ge 0} k(\frf\frp^{s+n}).
\end{align*}
We write $T$ for the torsion subgroup of $\Gal(k(\frp^\infty)/k)$ and let $k_\infty := k(\frp^\infty)^T$ be the
fixed field of $T$. Then $k_\infty/k$ is a $\Ze_p^d$-extension with $d=1$ if $p$ splits  in $k$ and $d = 2$ otherwise. By (H1)  $\frp$ is unramified in
$F$, and thus  (H2) implies $F \cap k_\infty = k$. 

We now investigate the extension $K_\infty/F$. We set $F_\infty := Fk_\infty$ and $L_{\infty}:=L k_{\infty}$. Since $\Gal(K_\infty/F_\infty)$ is finite and $\Gal(F_\infty/F)$ is torsion-free
we see that $\Gal(K_\infty/F_\infty)$ is the torsion subgroup of $\Gal(K_\infty/F)$. By class field theory we obtain
\[
\Gal(K_\infty/F) \simeq \varprojlim_{n} \left( \O_k / \frp^{s+n} \right)^\times \simeq \O_H^\times = \mu_H' \times (1+\frp_H)
\]
with $\mu_H' $ denoting the roots of unity of order $\N(\frp) - 1$ in $H$. With our definition of $s$ the $p$-adic exponential $\exp_p$ converges
on $\frp_{H}^s$, so that $1+\frp_H^s$ is torsion-free. 

\begin{lemma}
With $s$ as above we set $K_0 := k(\frf\frp^s)$. Then
  $K_0 \cap F_\infty = F$ and $K_0F_\infty = K_\infty$.
\end{lemma}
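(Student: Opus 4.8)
The plan is to compute with the Galois group $\Gal(K_\infty/F)$, which class field theory has already identified with $\O_H^\times = \mu_H' \times (1+\frp_H)$, and to pin down the subfields $K_0$ and $F_\infty$ inside $K_\infty$ via their fixed groups. First I would observe that $K_0 = k(\frf\frp^s)$ corresponds, under $\Gal(K_\infty/F) \simeq \varprojlim_n (\O_k/\frp^{s+n})^\times \simeq \O_H^\times$, to the quotient $(\O_k/\frp^s)^\times$; hence $\Gal(K_\infty/K_0)$ is the kernel of the reduction $\O_H^\times \lra (\O_k/\frp^s)^\times$, which is exactly $1+\frp_H^s$. On the other hand $F_\infty = Fk_\infty$, and since $k_\infty/k$ is the maximal $\Ze_p$-power subextension of $k(\frp^\infty)/k$, the subgroup $\Gal(K_\infty/F_\infty)$ is the torsion subgroup of $\Gal(K_\infty/F)$ — this was already noted in the text — which under the above isomorphism is $\mu_H' \times \mu(1+\frp_H)$, where $\mu(1+\frp_H)$ denotes the $p$-power torsion of $1+\frp_H$. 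Now $1 + \frp_H = \mu(1+\frp_H) \times (1+\frp_H^s)$ because $1+\frp_H^s$ is torsion-free (this is precisely why $s$ was chosen so that $\exp_p$ converges on $\frp_H^s$) and of finite index; so $\Gal(K_\infty/F_\infty) = \mu_H' \times \mu(1+\frp_H)$ and $\Gal(K_\infty/K_0) = 1+\frp_H^s$.

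With these two subgroups identified, the two claims become a statement about subgroups of the abelian group $A := \O_H^\times$. Writing $B := \Gal(K_\infty/K_0) = 1+\frp_H^s$ and $C := \Gal(K_\infty/F_\infty) = \mu_H' \times \mu(1+\frp_H)$, I need $B \cap C = \{1\}$ (which gives $K_0 F_\infty = K_\infty$) and $BC = A$ (which gives $K_0 \cap F_\infty = F$). But $B$ is torsion-free while $C$ consists entirely of torsion elements, so $B \cap C = \{1\}$ is immediate; and $BC = A$ is exactly the direct-product decomposition $\O_H^\times = (\mu_H' \times \mu(1+\frp_H)) \times (1+\frp_H^s)$ recorded in the previous display together with the remark that $1+\frp_H^s$ is torsion-free of finite index in $1+\frp_H$. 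Translating back through the Galois correspondence: $\Gal(K_\infty/K_0 F_\infty) = B \cap C = \{1\}$ yields $K_0 F_\infty = K_\infty$, and $\Gal(K_\infty/K_0 \cap F_\infty) = BC = A = \Gal(K_\infty/F)$ yields $K_0 \cap F_\infty = F$.

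The only point requiring a little care — and the step I expect to be the main obstacle — is the bookkeeping identifying $\Gal(K_\infty/K_0)$ with $1+\frp_H^s$ rather than with some larger subgroup of $1+\frp_H$: one must make sure the definition $K_\infty = \bigcup_n k(\frf\frp^{s+n})$ and the isomorphism $\Gal(K_\infty/F) \simeq \varprojlim_n (\O_k/\frp^{s+n})^\times$ are compatible, i.e. that the transition maps are the natural reductions and that $\frp$ being prime to $\frf$ and $w(\frf)=1$ together force $\Gal(k(\frf\frp^{s+n})/k(\frf)) \simeq (\O_k/\frp^{s+n})^\times$ with no spurious identifications (this is where hypothesis (H2), $p \nmid h_k$, and the injectivity/independence built into the choice of $\frf$ enter, via $F \cap k_\infty = k$ as already established). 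Once that compatibility is in hand, $\Gal(K_\infty/K_0)$ is visibly the kernel of $\varprojlim_n (\O_k/\frp^{s+n})^\times \to (\O_k/\frp^s)^\times$, namely $1+\frp_H^s$, and the rest is the elementary group theory above.
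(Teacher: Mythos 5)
Your overall route is the same as the paper's (identify the fixed subgroups of $K_0$ and $F_\infty$ inside $\Gal(K_\infty/F)\simeq\O_H^\times$, prove a direct-product decomposition, then apply the Galois correspondence), but there is a genuine gap at the decisive step. You assert that $1+\frp_H=(1+\frp_H)_{\mathrm{tors}}\times(1+\frp_H^s)$ \emph{because} $1+\frp_H^s$ is torsion-free of finite index in $1+\frp_H$. That implication is false in general: if $B$ is a torsion-free finite-index subgroup of a pro-$p$ abelian group $A$ of finite $\Zp$-rank, then $B\cap A_{\mathrm{tors}}=1$ always, but $BA_{\mathrm{tors}}=A$ holds if and only if the numerical identity $[A:B]=|A_{\mathrm{tors}}|$ holds. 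For example, with $A=\Zp\times\Ze/p$ and $B=p\Zp\times\{0\}$, $B$ is torsion-free of finite index yet $BA_{\mathrm{tors}}\ne A$.

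This is not a cosmetic point: the paper's proof begins precisely by checking, case by case, that $|(1+\frp_H)_{\mathrm{tors}}|=|(1+\frp_H)/(1+\frp_H^s)|$, and it is exactly here that the minimality of $s$ and the extra hypotheses b2), b3), c3) enter. If one takes, say, $p=2$ ramified with $H=\Qu_2(\sqrt{2})$ (excluded by b2)), then $s=3$, $[(1+\frp_H):(1+\frp_H^3)]=4$, but $(1+\frp_H)_{\mathrm{tors}}=\{\pm1\}$ has order $2$, so $K_0F_\infty\subsetneq K_\infty$. Your argument as written would ``prove'' the lemma in that case too, which shows the step is genuinely missing. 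To fix it, replace the phrase ``because $1+\frp_H^s$ is torsion-free of finite index'' by the explicit verification that $|(1+\frp_H)_{\mathrm{tors}}|$ equals $[(1+\frp_H):(1+\frp_H^s)]$ in each of the allowed cases; once that equality is in hand the rest of your argument (torsion-free meets torsion trivially, Galois correspondence) goes through and coincides with the paper's. Incidentally, the step you flag at the end as the likely main obstacle --- matching $\Gal(K_\infty/K_0)$ with $1+\frp_H^s$ --- is fine and not where the difficulty lies.
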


\begin{proof}
  In each case one can show that 
\[
\left| \left( 1 + \frp_H \right)_{tors} \right| = \left| \frac{1 + \frp_H}{1 + \frp_H^s}  \right|,
\]
where we write $\left( 1 + \frp_H \right)_{tors}$ for the subgroup of torsion elements of $1 + \frp_H$.
For $s=1$ this is trivial and, for example, if $p = 2 $ is ramified and $H = \Qu_2(\zeta_4)$, then $\left| {(1 + \frp_H)}/{(1 + \frp_H^s)}  \right| = 4$ and
$\left( 1 + \frp_H \right)_{tors} = \langle \zeta_4 \rangle$.

It is then easily shown that $(1 + \frp_H^s) \times \left( 1 + \frp_H \right)_{tors} = 1 + \frp_H$. In summary, we have a direct product
decomposition
\[
\Gal(K_\infty/F) \simeq \mu_H' \times  \left( 1 + \frp_H \right)_{tors} \times (1 + \frp_H^s)
\]
and the lemma follows because the fixed field of $1 + \frp_H^s$ (resp. $ \mu_H' \times  \left( 1 + \frp_H \right)_{tors}$) is $k(\frf\frp^s) = K_0$
(resp. $F_\infty$).
\end{proof}

In each case we thus obtain the following diagram of fields (with $K_n, F_n, L_n$ and $k_n$ defined below)
\[
\begin{tikzcd}[row sep=normal, column sep=tiny]
 {} & &  & K_{\infty} \arrow[-, bend left]{dd}{\Gamma} \arrow[-]{d} \arrow[-]{ld}  \arrow[-, bend right, swap]{ddll}{\Delta}\\
 &  & F_{\infty} \arrow[-]{ld} \arrow[-]{d} & K_{n}  \arrow[-]{d} \arrow[-]{ld} \\
 & L_{\infty} \arrow[-]{ld} \arrow[-]{d} & F_{n} \arrow[-]{dl} \arrow[-]{d} & K_{0} \arrow[-]{ld}  \\
k_{\infty} \arrow[-]{d} &  L_{n} \arrow[-]{ld} \arrow[-]{d} & F \arrow[-]{dl} & \\
 k_{n} \arrow[-]{d} & L \arrow[-]{dl} &  &\\
 k & & &
\end{tikzcd}
\]
We set $\Gamma:=\Gal(K_\infty/K_0)$ and identify $\Gamma$ via restriction with each of the Galois groups
$\Gal(F_\infty /F_{0}), \Gal(L_{\infty}/L_{0}) \text{ and } \Gal(k_{\infty}/k_{0})$. We let $K_n, F_n, L_n$ and $k_n$ denote the fixed field of $\Gamma^{p^n}$
of $K_\infty, F_\infty, L_\infty$ and $k_\infty$, respectively.
The case where $p$ is split in $k/\Qu$ is already treated in \cite{Bl04}. We therefore assume from now on that $p$ is non-split and write $e$ for the ramification degree of $p$ in $k/\Qu$.
It is not difficult to see that $K_n = k(\frf\frp^{s+en})$ for all $n \ge 0$.
We  have an isomorphism $\Gamma \simeq \Ze_p^2$ given by the following composition 
\begin{equation}\label{chiell def}
\Gamma \stackrel{\chiell}{\lra} 1 + \frp_H^s \stackrel{\log_p}\lra \frp_H^s \simeq \Ze_p^2.
\end{equation}
where $\chiell$ is induced by the inverse of the global Artin map.
Finally, we fix topological generators $\gamma_1$ and $\gamma_2$ of $\Gamma$ and by abuse of notation 
also write $\gamma_i$, $i=1,2$, for each of the restrictions of $\gamma_i$ to $K_n, F_n, L_n$ or $k_n$.

We fix an auxiliary integral ideal $\fra$ of $\Ok$ such that $(\fra , 6 \frf \frp)=1$.

\begin{definition}\label{psi_n def}
Let $i \in \{1,2\}$ and set $\Delta := \Gal(K_\infty / L_\infty) \simeq \Gal(K_0/L_0)$.  We define 
\[
K_{i,n} := K_{n}^{\langle \gamma_i \rangle} \quad L_{i,n}:= K_{i,n}^{\Delta}
\]
and set
\[
 \epsilon_{i,n} := \begin{cases} \N_{K_{n}/L_{i,n}}(\psi(1; \frf \frp^{s+n}, \fra)), & \text{ if }  \frp { \text{ is inert in } k/\Qu,}\\
\N_{K_{n}/L_{i,n}}(\psi(1; \frf \frp^{s+2n}, \fra)), & \text{ if } \frp {\text{ is ramified in } k/\Qu.}
                           \end{cases}
\]
\end{definition}

The groups $\Gal(K_{i,n}/K_0) \simeq \Gal(L_{i,n}/L)$ are cyclic groups of order $p^n$ 
generated by the image of $\gamma_j$ where $j \in \{1,2\}, j \ne i$.

So we obtain the following diagram of fields.
\[
\begin{tikzcd}
{}&  &  &  &  & K_{\infty} \arrow[d, "\Gamma^{p^{n}}", no head] &  \\
 & L_{\infty} \arrow[rrrru, "\Delta", no head] \arrow[d, "\Gamma^{p^{n}}", no head] &  &  &  & K_{n} \arrow[rd, no head] &  \\
 & L_{n} \arrow[rrrru, "\Delta", no head] \arrow[ld, no head]  &  &  & K_{1,n} \arrow[ru, no head] \arrow[rd, no head] &  & K_{2,n} \arrow[ld, no head] \\
L_{1,n} \arrow[rd, no head] \arrow[rrrru, "\Delta" near end, no head, dotted] &  & L_{2,n} \arrow[rrrru, "\Delta" near start, crossing over, no head]  \arrow[lu, no head, crossing over]&  &  & K_{0} &  \\
 & L \arrow[ru, no head] \arrow[rrrru, "\Delta", no head] &  &  &  &  & 
\end{tikzcd}
\]


From the norm relations of \cite[Thm.~2.3]{Bl04} we obtain for $i \in \{1,2\}$ and  $n \ge m \ge 0$
\begin{align*}
\N_{L_{i,n}/L_{i,m}}(\epsilon_{i,n}) =\epsilon_{i,m} \quad \text{ and } \quad
\N_{L_{i,n}/L}(\epsilon_{i,n}) =1,
\end{align*}
where the second equality holds since $\sigma(\frp)|_L = 1$ by hypothesis (H1).

By Hilbert's Theorem 90 there exist for $i,j \in \{1,2\}$  with $i \ne j$  unique elements 
$\beta_{i,n}\in~\left( L_{i,n}\right)^\times / L^\times$  such that 
$$\beta_{i,n}^{\gamma_j - 1} = \epsilon_{i,n}.$$
We have to keep in mind, that the $\beta_{i,n}$ depend on $\frf$, the choice of $\gamma_1$ and $\gamma_2$, and the choice of 
the auxiliary ideal $\fra$.

\begin{definition}
\label{def_general_kappa_n}
We define for $i \in \{1,2\}$ 
\[
\kappa_{i,n} =
 \kappa_{i,n}(L,\gamma_1, \gamma_2, \frf, \fra)   := \N_{L_{i,n}/L} (\beta_{i,n}) \in L^\times / {\left( L^\times \right)}^{p^n}.
\]
\end{definition}

For a prime $\frq$ of a number field $N$ we write $v_\frq\colon N^\times \lra \Ze$ for the normalized valuation map.

\begin{lemma}\cite[cf. Prop.~2.2]{So92}\label{lemma_valzerocoprimeidealmodpn}
Let $\frq$ be a prime ideal of $L$ relatively prime to $\frp$. Then
\[
v_\frq(\kappa_{i,n}) \equiv 0 (\mod p^n\Ze).
\]
\end{lemma}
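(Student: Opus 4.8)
The plan is to trace the valuation $v_\frq$ through the chain of definitions that produces $\kappa_{i,n}$, using that $\frq$ is prime to $\frp$ at each stage. First I would record the key inputs: by the very definition of elliptic units via special values of $\psi$, the element $\psi(1;\frf\frp^{s+n},\fra)$ is a unit away from primes dividing $\frf\frp\fra$ (this is standard, see \cite[Sec.~2]{Bl04}); but since we only need control at $\frq$ with $\frq \nmid \frp$, and since the construction is independent of the auxiliary ideal up to the norm relations, the relevant point is that $\epsilon_{i,n}$ is a $\frp$-unit in $L_{i,n}$, hence $v_\frw(\epsilon_{i,n})$ depends only on primes $\frw$ of $L_{i,n}$ above $\frp$. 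Actually more is true and more useful: $\epsilon_{i,n} = \N_{K_n/L_{i,n}}(\psi(\cdots))$ and the divisor of $\psi(1;\frf\frp^{s+n},\fra)$ in $K_n = k(\frf\frp^{s+en})$ is supported on primes above $\frp$ (and the auxiliary ideal $\fra$, but $\fra$ is coprime to $\frf\frp$ and one checks the $\fra$-part is a unit by the definition of $\psi$ as a ratio). So $\epsilon_{i,n}$ is a global $\frp$-unit.

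Next I would descend through Hilbert 90: the element $\beta_{i,n} \in (L_{i,n})^\times/L^\times$ satisfies $\beta_{i,n}^{\gamma_j-1} = \epsilon_{i,n}$. Passing to divisors, for any prime $\frw$ of $L_{i,n}$ not above $\frp$ we get $(\gamma_j - 1)(v_\frw(\beta_{i,n})) = v_\frw(\epsilon_{i,n}) = 0$ as a function on the $\Gal(L_{i,n}/L)$-orbit of $\frw$ — wait, more precisely, since $\epsilon_{i,n}$ is a $\frp$-unit, the divisor of any lift of $\beta_{i,n}$ is $(\gamma_j-1)$-fixed away from $\frp$. Because $\Gal(L_{i,n}/L) = \langle \gamma_j \rangle$ is cyclic of order $p^n$ and $\frq$ (being prime to $\frp$, hence to $\frf\frp$) is unramified in $L_{i,n}/L$, the decomposition group of any prime $\frw \mid \frq$ is cyclic, and a $(\gamma_j-1)$-invariant integer-valued function on the orbit of $\frw$ is constant; I would use this to show $v_\frq(\N_{L_{i,n}/L}(\beta_{i,n})) = \sum_{\frw \mid \frq} v_\frw(\beta_{i,n})$ is divisible by the orbit size. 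Concretely: choosing one lift $b \in (L_{i,n})^\times$ of $\beta_{i,n}$, the $\frq$-part of the divisor of $b$ is an integer combination of the primes $\frw_1,\dots,\frw_r$ above $\frq$ with $(\gamma_j-1)$ acting trivially on the multiset of exponents modulo the subgroup fixing each $\frw_s$; then $v_\frq(\kappa_{i,n}) = v_\frq(\N_{L_{i,n}/L}(b)) = \sum_s [L_{i,n,\frw_s}:L_\frq]\,\cdot$(common exponent)$\cdot$(something) — the cleanest route is: $\kappa_{i,n} = \N(b)$ and $v_\frq(\kappa_{i,n}) = \sum_{\frw\mid\frq} v_\frw(b)$, and the transitive $\langle\gamma_j\rangle$-action on $\{\frw\}$ together with $(\gamma_j-1)v_\frw(b)$ being forced to $0$ on the orbit (no ramification, $\epsilon_{i,n}$ a $\frp$-unit) shows all $v_\frw(b)$ on one orbit are equal, whence the sum over the orbit is $[L_{i,n}:L]/(\text{decomposition index})$ times a common value; since $\langle\gamma_j\rangle$ is a $p$-group of order $p^n$, every orbit has size a power of $p$, and the only way the orbit sum fails to be divisible by $p^n$ is if the orbit is not full — but then the stabilizer is a nontrivial $p$-subgroup and one iterates. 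I expect the correct formulation is simply: $v_\frw(b)$ is constant on $\langle\gamma_j\rangle$-orbits and the full Galois group permutes the primes above $\frq$, so $v_\frq(\kappa_{i,n})$ lies in $p^n\Ze$ unless $\frq$ splits, in which case one argues the split factor is itself a norm and handles it by the same token; this is exactly the mechanism of \cite[Prop.~2.2]{So92} and I would cite that for the bookkeeping.

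The main obstacle, I expect, is the careful bookkeeping with decomposition groups when $\frq$ is \emph{not} inert in $L_{i,n}/L$: one must verify that the contribution of each orbit of primes above $\frq$ to $v_\frq(\N_{L_{i,n}/L}(\beta_{i,n}))$ is divisible by $p^n$, which amounts to showing that if $H \subseteq \langle \gamma_j\rangle \cong \Ze/p^n$ is the decomposition group of $\frw\mid\frq$ (so of index $p^m$ with $m\le n$), then the ``transfer'' of the exponent along the $p^m$ conjugates, combined with $(\gamma_j-1)$-invariance of the exponent vector forced by $\epsilon_{i,n}$ being a $\frp$-unit, yields a multiple of $p^n$ — essentially because the norm from the decomposition field down to $L$ multiplies a single exponent by the residue-degree times $p^m$ and the local norm relations give the residue-degree factor $p^{n-m}$. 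I would verify the degenerate and split cases match \cite[Prop.~2.2]{So92} and otherwise invoke that reference directly for the elementary ideal-theoretic argument, since the only difference in our setting ($\hat E$ of height $2$, $p$ non-split) is irrelevant for this statement — the proof is purely about the Galois action on divisors supported away from $\frp$.
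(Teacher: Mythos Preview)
Your approach is correct and is precisely the argument of \cite[Prop.~2.2]{So92} to which the paper defers without giving its own proof: since $\epsilon_{i,n}$ is a unit at every $\frw\nmid\frp$, the valuation vector $(v_\frw(b))_{\frw\mid\frq}$ is $\gamma_j$-invariant, and summing over the single $\langle\gamma_j\rangle$-orbit of primes above $\frq$ gives $v_\frq(\kappa_{i,n}) = |\langle\gamma_j\rangle|\cdot v_\frw(b) = p^n v_\frw(b)$ in one stroke, so your bookkeeping worry about the non-inert case is unfounded. One small slip: $\frq\nmid\frp$ does \emph{not} imply $\frq\nmid\frf$, but $\frq$ is nevertheless unramified in $L_{i,n}/L$ because $L_{i,n}\subseteq Lk_\infty$ and $k_\infty/k$ is unramified outside~$\frp$.
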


As in \cite[Lemma~2.3]{So92} it is not difficult to see that
if $m \geq n \geq 0$ then the natural quotient map 
\begin{equation}\label{nat quot map}
L^{\times}/\left(L^{\times} \right)^{p^{m}} \rightarrow L^{\times}/ \left( L^{\times} \right)^{p^{n}}
\end{equation}
takes $\kappa_{i,m}$ to $\kappa_{i,n}$.

\begin{definition}
\label{def_general_kappa}
For $i \in \{1,2\}$ we define
\[
\kappa_{i} :=
 \kappa_{i}(L,\gamma_1, \gamma_2, \frf, \fra) := 
(\kappa_{i,n})_{n=0}^{\infty} \in \varprojlim_{n} L^{\times} / \left( L^{\times}\right)^{p^{n}}.
\]
\end{definition}

Recall that $\calO_{L, S_\frp}^\times$ denotes the group of $S_\frp$-units of $L$ with $S_{\frp}= S_{\frp}(L)$
denoting the set of prime ideals of $L$ above $\frp$. We then
have a natural injection
\[
\calO_{L, S_\frp}^\times \tensor_{\Ze} \Zp\simeq 
\varprojlim_{n} \left.\O_{L, S_{\frp}}^{\times} \middle/ \left( \O_{L, S_{\frp}}^{\times} \right)^{p^{n}} \right. 
\longrightarrow \varprojlim_{n} L^{\times} / \left( L^{\times}\right)^{p^{n}}.
\]

\begin{proposition}{\cite[cf. Prop.~2.3]{So92}}
For $i=1,2$ we have $\kappa_{i} \in \O_{L, S_{\frp}}^{\times} \otimes_{\Ze} \Ze_{p}$. 
\end{proposition}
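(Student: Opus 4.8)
The plan is to show that $\kappa_i$, which \emph{a priori} lies in $\varprojlim_n L^\times/(L^\times)^{p^n}$, actually comes from $\O_{L,S_\frp}^\times \tensor_\Ze \Zp$ under the natural injection displayed just before the proposition. First I would recall the standard identification
\[
\O_{L,S_\frp}^\times \tensor_\Ze \Zp \simeq \varprojlim_n \O_{L,S_\frp}^\times/(\O_{L,S_\frp}^\times)^{p^n},
\]
which is valid because $\O_{L,S_\frp}^\times$ is a finitely generated abelian group, and then argue that the image of this pro-$p$ completion inside $\varprojlim_n L^\times/(L^\times)^{p^n}$ consists exactly of those coherent sequences $(x_n)_n$ for which the valuation $v_\frq(x_n)$ is divisible by $p^n$ for every prime $\frq$ of $L$ \emph{not} lying above $\frp$. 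Indeed, such a sequence lifts compatibly (using that $L$ has only finitely many places and that the $S_\frp$-class group is finite, which here is controlled by (H2)) to a coherent sequence of $S_\frp$-units, hence to an element of the pro-$p$ completion.

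The key arithmetic input is then already in hand: Lemma~\ref{lemma_valzerocoprimeidealmodpn} tells us precisely that $v_\frq(\kappa_{i,n}) \equiv 0 \pmod{p^n}$ for every prime $\frq$ of $L$ coprime to $\frp$. Combined with the compatibility of the $\kappa_{i,n}$ under the quotient maps \eqref{nat quot map}, this shows $\kappa_i = (\kappa_{i,n})_n$ satisfies exactly the valuation conditions characterizing the image of $\O_{L,S_\frp}^\times \tensor_\Ze \Zp$. I would spell this out by choosing, for each $n$, a representative of $\kappa_{i,n}$ in $L^\times$; Lemma~\ref{lemma_valzerocoprimeidealmodpn} lets me modify this representative by a $p^n$-th power so that its divisor is supported on $S_\frp$ up to $p^n$-th powers of ideals, and then (H2), applied to the $S_\frp$-class group of $L$ — note the $S_\frp$-class group is a quotient of the class group of $L$, which in turn is killed after tensoring with $\Zp$ by the argument that $p \nmid h_k$ propagates through the abelian extension $L/k$, or more directly one only needs that the relevant class is $p^n$-divisible — lets me further adjust so the representative becomes an $S_\frp$-unit modulo $(L^\times)^{p^n}$. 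These adjusted representatives can be chosen compatibly by a standard inverse-limit (Mittag-Leffler) argument since each fiber of the transition maps is nonempty and the groups are finite.

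The main obstacle I anticipate is the bookkeeping around the class group: to lift a coherent sequence of elements with $S_\frp$-supported divisors (mod $p^n$-th powers) to an actual coherent sequence of $S_\frp$-\emph{units}, one needs the $p$-part of the $S_\frp$-class group of $L$ to not obstruct the lifting, and this is where hypothesis (H2) — together with the fact that $\frp$ splits completely in $L/k$, so that $S_\frp$-class group is still a quotient of $\Cl(L)$ whose $p$-part is trivial under (H2) and the genus-theoretic observation that $p \nmid h_k$ forces $p \nmid h_L$ here — must be invoked carefully. Once that is granted, the finiteness of all the groups $\O_{L,S_\frp}^\times/(\O_{L,S_\frp}^\times)^{p^n}$ makes the inverse limit behave well and the proof concludes. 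I would also remark that this is exactly the analogue of \cite[Prop.~2.3]{So92} and \cite[Bl04]{Bl04}, so the argument is essentially a transcription of the classical one, the only genuinely new point being to confirm that nothing in the rank-$2$, non-split setting changes the validity of Lemma~\ref{lemma_valzerocoprimeidealmodpn}, which it does not since that lemma was already proved above in full generality.
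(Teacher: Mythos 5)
Your identification of Lemma~\ref{lemma_valzerocoprimeidealmodpn} as the arithmetic input and your description of the image of $\O_{L,S_\frp}^\times\otimes_\Ze\Zp$ inside $\varprojlim_n L^\times/(L^\times)^{p^n}$ are fine, but the way you close the lifting step rests on a false claim. You assert that (H2), namely $p\nmid h_k$, ``propagates through the abelian extension $L/k$'' and hence kills the $p$-part of $\Cl(L)$ and of the $S_\frp$-class group. There is no such genus-theoretic principle: for an abelian extension $L/k$, $p\nmid h_k$ does \emph{not} force $p\nmid h_L$, and hypotheses (H1)--(H2) do not rule out $p$-torsion in $\Cl(L)$ or in $cl_{L,S_\frp}$. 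Since the nonemptiness of the fibers in your Mittag--Leffler argument is justified only by this incorrect triviality, the argument does not go through as written. Your hedge that ``one only needs that the relevant class is $p^n$-divisible'' is also not established by anything in the proposal.

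The paper's proof avoids this entirely by using only the \emph{finiteness} of the $S_\frp$-class group, with no assumption on its $p$-part, and it produces a representative at level $n$ directly rather than via an inverse-limit argument. Writing $m=|cl_{L,S_\frp}|=p^t m'$ with $p\nmid m'$, one chooses a representative $c$ of $\kappa_{i,n+t}$ at the \emph{shifted} level $n+t$. Lemma~\ref{lemma_valzerocoprimeidealmodpn} gives $(c)=\frP_1^{a_1}\cdots\frP_r^{a_r}\fra^{p^{n+t}}$ with $\fra$ coprime to $\frp$; since both $\fra^{p^{n+t}}$ and $\fra^{m}$ lie in $\langle\frP_1,\dots,\frP_r,P_L\rangle$ and $\gcd(p^{n+t},m)=p^t$, so does $\fra^{p^t}$, giving $\fra^{p^t}=\frP_1^{b_1}\cdots\frP_r^{b_r}\,x$ for some $x\in L^\times$. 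Then $cx^{-p^n}\in\calO_{L,S_\frp}^\times$, and because the transition map~\eqref{nat quot map} carries $\kappa_{i,n+t}$ to $\kappa_{i,n}$, this element represents $\kappa_{i,n}$. To repair your proof you should drop the appeal to (H2) and replace it by this level-shifting trick, which absorbs the $p$-torsion of the $S_\frp$-class group by working $t$ steps higher in the tower.
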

\begin{proof}
We fix $i$ and set $\kappa := \kappa_i$ and $\kappa_n := \kappa_{i,n}$. We recall that $\kappa_n \in L^\times / (L^\times)^{p^n}$ and note
that it suffices to show  $\kappa_n \cap \calO_{L, S_\frp}^\times \ne \emptyset$ for all $n \ge 0$. 

Let $m$ be the order of the $S_\frp$-class group $cl_{L, S_\frp}$ and recall that
\[
cl_{L, S_\frp} \simeq I_L / \langle \frP_1, \ldots, \frP_r, P_L \rangle,
\]
where $I_L$ denotes the group of fractional ideals of $L$, $P_L$ the subgroup of principal ideals and $\frP_1, \ldots, \frP_r$ the prime ideals
of $L$ lying over $\frp$. Write $m = p^tm'$ with a natural number $t$ and $p \nmid m'$ and choose $c \in \kappa_{n+t}$. 
By Lemma \ref{lemma_valzerocoprimeidealmodpn} we obtain
\[
  (c) = \frP_1^{a_1} \cdots \frP_r^{a_r} \fra^{p^{n+t}}
\]
with integers $a_1, \ldots, a_r$ and a fractional ideal $\fra$ which is coprime to $\frp$. Hence 
\hbox{$\fra^{p^{n+t}} \in \langle \frP_1, \ldots, \frP_r, P_L \rangle$}. Clearly $\fra^{m} \in \langle \frP_1, \ldots, \frP_r, P_L \rangle$ and hence
\hbox{$\fra^{p^t} \in \langle \frP_1, \ldots, \frP_r, P_L \rangle$}. Therefore, there exists an element $x \in L^\times$ and integers  $b_1, \ldots, b_r$ 
such that
\[
\fra^{p^t} = \frP_1^{b_1} \cdots \frP_r^{b_r}x.
\]
We conclude that
\[
(c) = \frP_1^{a_1 + p^nb_1} \cdots \frP_r^{a_r + p^nb_r} x^{p^n}.
\]
It follows that $cx^{-p^n} \in \calO_{L, S_\frp}^\times$ and since the natural quotient map in (\ref{nat quot map}) takes
$\kappa_{n+t}$ to $\kappa_n$  we obtain $cx^{-p^n} \in \kappa_n$.
\end{proof}

\subsection{Statement of the Main Theorem}
For each prime $\frP$ of $L$ above $\frp$ the valuation map $v_\frP \colon L^\times \lra \Ze$ induces a natural homomorphism, also denoted by $v_\frP$,
\[
v_{\frP} \colon  \varprojlim_{n} L^\times / \left( L^\times \right)^{p^{n}} \rightarrow \Ze_{p}.
\]
The restriction of $v_\frP$ to $\calO_{L, S_\frp}^\times \tensorZ \Zp$ obviously coincides with the $\Zp$-linear extension of 
$v_\frP \colon \calO_{L, S_\frp}^\times \lra \Ze$.

By our assumption (H1) each prime $\frP \in S_\frp$ corresponds to a unique embedding
\[
j_\frP \colon L \hookrightarrow k_\frp.
\]
Recalling the definitions of the isomorphisms in (\ref{chiell def}) we set
\[
\omega_i := \log_p(\chiell(\gamma_i))
\]
for $i \in \{1,2\}$
 and obtain a $\Zp$-basis $\omega_1, \omega_2$ of $\frp_{H}^s$. The set $\{ \omega_1, \omega_2 \}$ is also a $\Qp$-basis of $H$ and we
write $\pi_{\omega_i} \colon H \lra \Qp$ for the projection maps. Explicitly, we have for each $\alpha \in H$ the equality 
$\alpha = \pi_{\omega_1}(\alpha)\omega_1 + \pi_{\omega_2}(\alpha)\omega_2$.

Now we are ready to formulate our main theorem:
\begin{theorem}
\label{thm_main}
Let $p$ be a prime which does  not split in $k / \Qu$ and assume $\mathrm{(H1)}$ and $\mathrm{(H2)}$. 
If $p=2$ (resp. $p=3$) we assume in addition that b2) (resp. b3) or c3)) holds.
Then for each prime ideal $\frP$ in $L$ above $\frp$ we have
\begin{equation}\label{main equality}
\pi_{\omega_{i}}\left( \log_p(j_{\frP}(\N_{k(\frf)/L} (\psi(1;\frf,\fra)))) \right) 
= v_\frP (\kappa_{j})
\end{equation}
in $\Zp$, where $i,j \in \{1,2 \}$ with $i \neq j$.
\end{theorem}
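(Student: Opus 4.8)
The plan is to reduce the finite-level identity \eqref{main equality} to an Iwasawa-theoretic statement about the norm-coherent sequence $u = (u_n)$ of elliptic units and its associated Coleman power series, and then to extract the constant term $\Col_u(0)$ using Seiriki's pairing. First I would unwind the definition of $\kappa_{j}$: by Hilbert 90 we have $\beta_{j,n}^{\gamma_i - 1} = \epsilon_{j,n}$, and the left-hand side of \eqref{main equality} should be recognized as the ``logarithmic derivative in the $\gamma_i$-direction'' of the norm-coherent unit. The key point is that $v_\frP(\kappa_{j,n}) \bmod p^n$ is computed by applying the reciprocity map $\rec_{L_\frP}$ at $\frP$ to (a chosen representative of) $\beta_{j,n}$ and pairing with $\gamma_i$; since $\frp$ is totally split, $L_\frP = k_\frp = H$ and the local unit in question is $j_\frP(\beta_{j,n})$, whose image under the norm is tied to $j_\frP(\epsilon_{j,n}) = j_\frP(\N_{K_n/L_{j,n}} u_n)$.

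Next I would set up the Coleman machinery. Let $\hat E$ be the relative Lubin--Tate formal group attached to the CM elliptic curve $E/F$; the elliptic units $u_n = \psi(1;\frf\frp^{s+\ast n},\fra)$ form a norm-coherent sequence in the tower $K_\infty/K_0$, so there is a Coleman power series $\Col_u(X) \in \calO_H[[X]]^\times$ with $\Col_u(\zeta_n) = j_\frP(u_n)$ for the compatible system of torsion points $\zeta_n \in \hat E$. Taking the $\hat E$-logarithmic derivative $\lambda \frac{d}{dX}\log\Col_u$ and evaluating along the tower produces, in the limit, an element of the Iwasawa algebra $\calO_H[[\Gamma]] \otimes \Qp$; its image under the two projections $\pi_{\omega_1},\pi_{\omega_2}$ governs the two valuations $v_\frP(\kappa_1),v_\frP(\kappa_2)$. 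The identity $v_\frP(\kappa_j) = \pi_{\omega_i}(\log_p j_\frP(\N_{F/L}\psi_\frf))$ then amounts exactly to the assertion that the ``constant term'' piece of this logarithmic derivative equals $\log_p$ of the constant term $\Col_u(0) = j_\frP(\N_{K_0/L}\psi_\frf)$ (up to the norm down to $L$, which is where $\N_{k(\frf)/L}$ enters after noting $\frp$ splits completely so $\N_{K_0/F}$ contributes trivially modulo the relevant power of $p$, and $\N_{F/L}$ is the residual norm).

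The decisive input, and the main obstacle, is the computation of $\Col_u(0)$. In the split case of \cite{Bl04} one has $\hat E$ of height $1$ over $\Qp$ and can follow Solomon's $\Gm$-argument almost verbatim; here $\hat E$ has height $2$ over the quadratic field $H$, so the naive approach fails. The resolution is Seiriki's theorem \cite{Sei15}, which expresses $\Col_u(0)$ in terms of a pairing whose definition mirrors Solomon's construction precisely — this is what makes the two sides of \eqref{main equality} match. Concretely I would: (i) invoke the self-contained proof of Seiriki's result from Section~\ref{sec_constantterm} to obtain $\log_p \Col_u(0)$ as a value of that pairing; (ii) identify that pairing value, after applying $\pi_{\omega_i}$, with $\lim_n \frac{1}{p^n} v_\frP(\kappa_{j,n})$ using the compatibility \eqref{nat quot map} and Lemma~\ref{lemma_valzerocoprimeidealmodpn} (which guarantees $v_\frq(\kappa_{j,n}) \equiv 0 \bmod p^n$ away from $\frp$, so that the ideal $(\beta_{j,n})$ is supported on $S_\frp$ up to $p^n$-th powers, making $v_\frP$ the only surviving invariant); (iii) track the effect of the norms $\N_{K_n/L_{j,n}}$ and $\N_{L_{j,n}/L}$ through the Coleman power series, using that norm-coherence of $u$ translates into the behaviour of $\Col_u$ under the trace operators on $\calO_H[[X]]$, and that $\frp$ splitting completely in $L/k$ kills the extra Galois averaging at the bottom of the tower.

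Finally I would treat the small primes $p = 2,3$: under b2) ($H = \Qu_2(\zeta_4)$) and b3)/c3), the integer $s$ is chosen precisely so that $1 + \frp_H^s$ is torsion-free and $\exp_p$ converges on $\frp_H^s$, so the isomorphism $\Gamma \simeq \frp_H^s \simeq \Zp^2$ of \eqref{chiell def} and the direct product decomposition from Lemma~1.2 go through unchanged; the only adjustment is that the tower uses $k(\frf\frp^{s+en})$ rather than $k(\frf\frp^{1+n})$, which is already built into Definition~\ref{psi_n def}. Hence the same argument applies verbatim in those cases, and the case $p=3$ inert is covered because $\hat E$ is then still height $2$ but the residue field arithmetic is tame enough ($s=1$) for Seiriki's computation to apply without modification. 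I expect the bookkeeping in step (iii) — correctly matching the normalizations of $\chiell$, $\log_p$, the Coleman logarithmic derivative, and the reciprocity map at $\frP$ so that the constants (not just the shape) agree — to be the most delicate part, together with verifying that no spurious factor of $\N(\frp) - 1$ or power of $p$ is introduced when passing from $K_0$ down to $L$.
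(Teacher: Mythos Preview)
Your core approach is correct and matches the paper: one applies Seiriki's result (more precisely Corollary~\ref{cor_sei}) to the norm-coherent sequence of elliptic units, and the two sides of that identity are exactly the two sides of the congruence in Remark~\ref{rem_equivalencemainconj 2}. However, several details are muddled.

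First, the logarithmic derivative $\lambda\,\tfrac{d}{dX}\log\Col_u$ plays no role whatsoever. You seem to be conflating this with Coates--Wiles-type arguments; here one works directly with the \emph{constant term} $\Col_u(0)$ via Seiriki's formula $(u,\chi)_{H_\infty'/H} = -\chi\bigl(\rec_H(\N_{H'/H}\Col_u(0))\bigr)$, and no derivative is ever taken. Second, $\Col_u(0)$ is not $j_\frP(\N_{K_0/L}\psi_\frf)$ but simply $\iota(\psi(1;\frf,\fra))$ itself, the elliptic unit in $F=k(\frf)$ before any norm (Proposition~\ref{prop_Colemanpslocaltoglobal}~c)). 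The norm $\N_{F/L}$ enters only because Corollary~\ref{cor_sei} produces $\N_{H'/H}(\Col_u(0))$, and in the special case where $L$ is the decomposition field of $\frp$ in $F/k$ one has $H'=\tilde F$, $H=\tilde L$.

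The actual argument is more direct than your step~(iii) suggests. Having reduced to the special case, one defines the finite-order character $\chi_{i,n}$ on $\Gal(H_\infty'/H)$ as the composite of $\chiell$, $\log_p$, and $\pi_{\omega_j}$; by construction $\ker(\chi_{i,n})$ cuts out $\widetilde{L_{i,n}}$ and $\chi_{i,n}(\gamma_j)=1/p^n$. Then the very definition of Seiriki's pairing gives $(u,\chi_{i,n})_{H_\infty'/H}=\tfrac{1}{p^n}v_H(\iota(\beta_{i,n}))$, while unwinding the character on the other side gives $\tfrac{1}{p^n}\pi_{\omega_j}\bigl(\log_p\iota(\N_{F/L}\psi(1;\frf,\fra))\bigr)$ (here Remark~\ref{local_global_reciprocity} accounts for the sign). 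Equating these is exactly the congruence of Remark~\ref{rem_equivalencemainconj 2}. There is no need to track trace operators on $\calO_H[[X]]$, and no spurious factor of $\N(\frp)-1$ appears because the Coleman theory already lives over $H'=\tilde F$, not over $\widetilde{K_0}$. The passage from the special case to general $L$ is handled by reference to \cite[Sec.~4.3]{Bl04}, not by a separate bookkeeping argument.
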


\begin{remark}
Theorem \ref{thm_main} is the analogue of \cite[Thm.~1.1]{Bl04} which covers the case of split primes.
\end{remark}

\begin{remark}
\label{rem_equivalencemainconj 1}
The equality (\ref{main equality}) in Theorem \ref{thm_main} is equivalent to
\[
\log_p(j_{\frP}(\N_{k(\frf)/L} (\psi(1;\frf,\fra))))  = v_\frP (\kappa_{2}) \omega_1 + v_\frP (\kappa_{1}) \omega_2.
\]
\end{remark}

It is often convenient to work on finite levels. 

\begin{remark}
\label{rem_equivalencemainconj 2}
The equality (\ref{main equality}) of Theorem \ref{thm_main}  is valid, if and only if for all $n \ge 1$ and all $i,j \in \{1,2\}$ with $i \ne j$ 
the following congruence holds: 
\begin{eqnarray*}
\pi_{\omega_{i}} \left( \log_p\left( j_{\frP}(\N_{k(\frf)/L} \psi(1;\frf,\fra)) \right) \right) 
\equiv v_\frP (\kappa_{j,n}) \pmod{p^n}.
\end{eqnarray*}
\end{remark}

\section{Computing the constant term of the Coleman power series}\label{sec_constantterm}

In this section we recall a recent result of T.~Seiriki  \cite{Sei15}  who describes the constant term of 
a Coleman power series  in terms of a pairing which
is defined in the local setting of Lubin-Tate extensions in a way similar to
Solomon's construction.  We slightly have to adapt Seiriki's result for the setting of relative Lubin-Tate extensions.
For the  convenience of the reader we give a self-contained proof
of Seiriki's result in this section. We follow his strategy but adapt some
of his arguments and fill in some details whenever we feel that this is necessary.

\subsection{Definition and basic properties of Seiriki's pairing}
Let $K$ be a local field and  $L/K$ a finite abelian Galois extension with Galois group $G$.
Let $v_{K}$ be the normalized valuation of $K$ (i.e., $v_K(K^\times) = \Ze$).
We put $U_{L/K}:= \ker(\N_{L/K})$ and write $\hat{G}:= \Hom(G, \Qu/\Ze)$ for the group of characters of $G$.

\begin{definition}
\label{def_pairing}
For a character $\chi \in \hat{G}$ we set $K_{\chi}:= L^{\ker(\chi)}$. Then $G_\chi := \Gal(K_\chi/K)$ is a cyclic group whose order we denote
by $d_\chi$ (so $d_\chi = \ord(\chi)$). Let $\sigma \in G$ be such that $\chi(\sigma) = 1/d_\chi+\Ze$.
For each element  $u \in U_{L/K}$ we define $u_{\chi}:= \N_{L / K_{\chi}}(u)$
and observe that $\N_{K_{\chi}/K}(u_{\chi})=1$. Therefore, by Hilbert's Theorem 90, there exists an element
$b_{\chi} \in K^{\times}_{\chi}$ such that
\[
u_{\chi}= b_{\chi}^{\sigma -1}
\]
which is unique up to elements of $K^{\times}$. We then define 
\begin{align*}
(\cdot, \cdot)_{L/K}: U_{L/K} \times \hat{G} &\rightarrow \Qu / \Ze, \\
(u,\chi) &\mapsto v_{K}(b_{\chi}) + \Ze.
\end{align*}
\end{definition}

The pairing is obviously multiplicative in the first variable. Moreover, from the very definition we obtain
\[
(u,  \chi)_{L/K} = (N_{L/K_{\chi}}(u), \chi)_{K_{\chi}/K}.
\]

The proof of multiplicativity in the second variable is more involved. In the following we give an expanded and corrected version of Seiriki's  proof of \cite[Prop.~2.2]{Sei15}.

For a finite extension $L/K$ we write $f_{L/K}$ for the degree of the residue class field extension.

\begin{lemma}\label{exp formula}
  Let $L/K$ be a finite  abelian extension of local fields with $G = \Gal(L/K)$. Then for  $\alpha \in L^\times$, $\tau \in G$ and
$\chi \in \hat{G}$ one has
\[
\left( \frac{\tau(\alpha)}{\alpha}, \chi \right)_{L/K} = f_{L/K} v_L(\alpha) \chi(\tau)  = [L:K] v_K(\alpha)\chi(\tau) .
\]  
\end{lemma}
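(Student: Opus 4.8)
The plan is to compute both sides directly from the definition of the pairing, reducing to the cyclic field $K_\chi$ where everything becomes explicit. First I would use the already-noted compatibility $(u,\chi)_{L/K} = (\N_{L/K_\chi}(u), \chi)_{K_\chi/K}$ applied to $u = \tau(\alpha)/\alpha$. The numerator $\N_{L/K_\chi}(\tau(\alpha)/\alpha)$ can be rewritten: if $\bar\tau$ denotes the image of $\tau$ in $G_\chi = \Gal(K_\chi/K)$, then $\N_{L/K_\chi}(\tau(\alpha)) = \bar\tau(\N_{L/K_\chi}(\alpha))$ because $\N_{L/K_\chi}$ is Galois-equivariant for the action of $G/\Gal(L/K_\chi)$ on $K_\chi$. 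Setting $\beta := \N_{L/K_\chi}(\alpha) \in K_\chi^\times$, we are reduced to evaluating $(\bar\tau(\beta)/\beta, \chi)_{K_\chi/K}$, and I would note $v_{K_\chi}(\beta) = f_{L/K_\chi} v_L(\alpha)$ since a norm multiplies valuations by the residue degree (the extension $L/K_\chi$ being, say, totally ramified times unramified — in general $v_{K_\chi}(\N_{L/K_\chi}(\alpha)) = f_{L/K_\chi} v_L(\alpha)$ by the standard formula for norms of local fields). Thus it suffices to prove the lemma in the cyclic case.

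So I would assume $L/K$ cyclic, $G = G_\chi = \langle\sigma\rangle$ of order $d = d_\chi$, with $\chi(\sigma) = 1/d + \Z$, and $\tau = \sigma^k$ for some integer $k$, so $\chi(\tau) = k/d + \Z$. Now I must find $b_\chi \in K^\times_\chi = L^\times$ with $b_\chi^{\sigma-1} = \N_{L/K_\chi}(\tau(\alpha)/\alpha) = \tau(\alpha)/\alpha = \sigma^k(\alpha)/\alpha$. A natural candidate is $b_\chi = \prod_{j=0}^{k-1} \sigma^j(\alpha)$, since telescoping gives $b_\chi^{\sigma-1} = \sigma^k(\alpha)/\alpha$ exactly (one checks $b_\chi^\sigma = \prod_{j=1}^{k} \sigma^j(\alpha)$, so the ratio is $\sigma^k(\alpha)/\sigma^0(\alpha)$). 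Then
\[
(u,\chi)_{L/K} = v_K(b_\chi) + \Z = v_K\!\left(\prod_{j=0}^{k-1}\sigma^j(\alpha)\right) + \Z = \sum_{j=0}^{k-1} v_K(\sigma^j(\alpha)) + \Z.
\]
Since $v_K$ is Galois-invariant (it is the normalized valuation of $K$ extended to $L$, or more precisely $v_K(\sigma^j(\alpha)) = \tfrac{1}{e_{L/K}} v_L(\sigma^j(\alpha)) = \tfrac{1}{e_{L/K}} v_L(\alpha)$, but one must be careful: in the lemma $v_K(\alpha)$ for $\alpha \in L^\times$ presumably means the unique extension of the normalized valuation of $K$, with value group $\tfrac{1}{e_{L/K}}\Z$, while $f_{L/K} v_L(\alpha) = [L:K] v_K(\alpha)$ records the relation $v_L = e_{L/K} v_K$ together with $[L:K] = e_{L/K} f_{L/K}$). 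Each term equals $v_K(\alpha)$, so the sum is $k\, v_K(\alpha)$, giving $(u,\chi)_{L/K} = k\, v_K(\alpha) + \Z$. On the other hand $f_{L/K} v_L(\alpha)\chi(\tau) = [L:K] v_K(\alpha) \cdot (k/d + \Z)$; with $d = [L:K]$ this is $k\, v_K(\alpha) + \Z$, matching. This also confirms the two right-hand expressions in the lemma agree via $f_{L/K} v_L(\alpha) = [L:K] v_K(\alpha)$.

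The main obstacle, and the place where care is genuinely needed, is bookkeeping of normalizations: the symbol $v_K$ is applied to elements of $L^\times$, so one must fix whether this denotes the normalized valuation of $K$ (value group $\Z$, extended to $L$ with values in $\tfrac{1}{e}\Z$) or the restriction of $v_L$; the statement of the lemma, pairing $f_{L/K} v_L(\alpha)$ against $[L:K] v_K(\alpha)$, pins down the convention, and I would make this explicit at the outset. A secondary subtlety is the descent step: I should verify that $b_\chi$ computed for $K_\chi/K$ pulls back correctly, i.e. that the well-definedness of the pairing modulo $K^\times$ is respected, and that $v_K(b_\chi)$ is independent of the choice of $\sigma$ with $\chi(\sigma) = 1/d_\chi$ — but this independence is already implicit in Definition \ref{def_pairing} being well-posed, so I would simply invoke it. Everything else is the telescoping identity and the multiplicativity of valuations under norms, both routine.
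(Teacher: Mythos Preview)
Your proposal is correct and follows essentially the same route as the paper: both reduce to the cyclic subfield $K_\chi$, write the restriction of $\tau$ as a power $\sigma^s$ of the chosen generator, and use the telescoping identity $\sigma^s(\alpha_\chi)/\alpha_\chi = \prod_{j=0}^{s-1}\bigl(\sigma^{j+1}(\alpha_\chi)/\sigma^j(\alpha_\chi)\bigr)$ to read off $b_\chi$ and its $v_K$-valuation. The only cosmetic difference is that you phrase it as ``reduce to the cyclic case and then argue for $\alpha$'', whereas the paper works throughout with $\alpha_\chi = \N_{L/K_\chi}(\alpha)$ in one pass; the underlying computation and the bookkeeping $v_{K_\chi}(\alpha_\chi) = f_{L/K_\chi}\,v_L(\alpha)$ are the same.
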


\begin{proof}
By our definitions $\langle \sigma|_{K_\chi} \rangle = G_\chi$, so that we may fix $s \in \Ze_{>0}$ such that
$\sigma^s \mid_{K_\chi} = \tau \mid_{K_\chi}$. 

If we set $\alpha_\chi := N_{L/K_\chi}(\alpha)$, then 
\[
N_{L/K_\chi}\left( \frac{\tau(\alpha)}{\alpha} \right) = \frac{\tau(\alpha_\chi)}{\alpha_\chi} = \frac{\sigma^s(\alpha_\chi)}{\alpha_\chi} = 
\prod_{j=0}^{s-1}  \frac{\sigma(\sigma^j(\alpha_\chi))}{\sigma^j(\alpha_\chi)}
\]
and we obtain from the definition of the pairing and multiplicativity in the first variable
\begin{eqnarray*}
\left( \frac{\tau(\alpha)}{\alpha}, \chi \right)_{L/K} 
= \sum_{j=0}^{s-1}  \left( \frac{\sigma(\sigma^j(\alpha_\chi))}{\sigma^j(\alpha_\chi)}, \chi \right)_{K_\chi/K} 
= \sum_{j=0}^{s-1}  v_K( \sigma^j(\alpha_\chi)) 
= s  v_K( \alpha_\chi).
\end{eqnarray*} 
Let $\pi_L$ be a uniformizing element in $L$ and set $\pi_\chi := N_{L/K_\chi}(\pi_L)$. 
Then $v_{K_\chi}(\pi_\chi) = f_{L/K_\chi}$.
We write $\alpha = \pi_L^a \beta$ with $a = v_L(\alpha)$ and $\beta \in \calO_L^\times$.
Then  
\begin{eqnarray*}
v_K(\alpha_\chi) =  av_K(\pi_\chi) = af_{L/K_\chi} \frac{1}{e_{K_\chi/K}} = a f_{L/K}\frac{1}{e_{K_\chi/K}f_{K_\chi/K}} = a f_{L/K}\frac{1}{d_\chi}.
\end{eqnarray*} 
Hence we obtain
\[
\left( \frac{\tau(\alpha)}{\alpha}, \chi \right)_{L/K} = v_L(\alpha) f_{L/K} \frac{s}{d_\chi}
\]
and noting $\chi(\tau) = \chi(\sigma^s) = \frac{s}{d_\chi} + \Ze$ the first equality of the lemma follows. The second equality
is then immediate from $v_L = e_{L/K}v_K$ and $e_{L/K}f_{L/K} = [L:K]$.
\end{proof}

\begin{proposition}\label{pairing mult}
  Assume that $L/K$ is a totally ramified finite abelian extension of a $p$-adic field $K$ with $G = \Gal(L/K)$. 
Then for any $u \in U_{L/K}$ and any $\chi, \chi' \in \hat{G}$ one has
\[
(u, \chi\chi')_{L/K} = (u, \chi)_{L/K} + (u, \chi')_{L/K}.
\]
\end{proposition}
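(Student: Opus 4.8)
The plan is to exploit the two facts recorded just before the statement — the compatibility $(u,\chi)_{L/K}=(\N_{L/K_\chi}(u),\chi)_{K_\chi/K}$ and the explicit formula of Lemma~\ref{exp formula} — together with a reduction to Galois groups generated by two elements.

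First I would reduce to the case $L=K_\chi K_{\chi'}$. Setting $M:=K_\chi K_{\chi'}$ one has $K_{\chi\chi'}\subseteq M$, and since $\N_{L/K_\psi}=\N_{M/K_\psi}\circ\N_{L/M}$ the three numbers $(u,\chi)_{L/K}$, $(u,\chi')_{L/K}$ and $(u,\chi\chi')_{L/K}$ agree with the corresponding numbers attached to $(M,\N_{L/M}(u))$; so we may assume $G\hookrightarrow G_\chi\times G_{\chi'}$, which is generated by two elements. Since the pairing is multiplicative in its first argument, the quantity $\Psi_u(\chi,\chi'):=(u,\chi\chi')_{L/K}-(u,\chi)_{L/K}-(u,\chi')_{L/K}$ is additive in $u$, so it suffices to prove that it vanishes on a generating set of $U_{L/K}$. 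On the subgroup $I_GL^{\times}\subseteq U_{L/K}$ generated by the elements $\alpha^{\tau-1}$ ($\tau\in G$, $\alpha\in L^{\times}$) this is immediate from Lemma~\ref{exp formula}, which shows that $(\alpha^{\tau-1},\psi)_{L/K}=[L:K]v_K(\alpha)\psi(\tau)$ depends additively on $\psi$. Consequently $\Psi_u$ factors through the finite group $U_{L/K}/I_GL^{\times}\cong\hat{H}^{-1}(G,L^{\times})$.

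If $G$ is cyclic this group is trivial and there is nothing left to prove; the same applies whenever $\gcd(\ord\chi,\ord\chi')=1$, because then $\Gal(M/K)\hookrightarrow G_\chi\times G_{\chi'}$ is cyclic. Using the coprime case and the decomposition $\chi=\prod_\ell\chi_\ell$ into $\ell$-primary parts, I would reduce to $\chi$ and $\chi'$ of $\ell$-power order for a single prime $\ell$; then $\chi$, $\chi'$ and $\chi\chi'$ all factor through the maximal $\ell$-subextension $L_{(\ell)}$ of $L/K$, so applying the compatibility identity once more to descend to $L_{(\ell)}$, and then again reducing to $L=K_\chi K_{\chi'}$, one is left with $G$ a two-generated abelian $\ell$-group, say $G\cong\Ze/\ell^a\times\Ze/\ell^b$, for which $\hat{H}^{-1}(G,L^{\times})\cong\wedge^2G$ is cyclic of order $\ell^{\min(a,b)}$.

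It then remains to verify that $\Psi_{u_0}(\chi,\chi')=0$ for a single $u_0\in U_{L/K}$ whose class generates $\hat{H}^{-1}(G,L^{\times})$. Such a $u_0$ can be produced from the two cyclic subextensions $L/L_1$ and $L/L_2$, where $L_i:=L^{\langle g_i\rangle}$: one solves the relevant Hilbert 90 equation for $\N_{L/L_1}(u_0)$ inside the cyclic extension $L_1/K$ and lifts back to $L$. Evaluating $\Psi_{u_0}$ then reduces, via the expression of $(u,\psi)_{L/K}$ as the value of $\psi$ on a reciprocity-type element of $\Gal(K_\psi/K)$ attached to $\N_{L/K_\psi}(u)$ (this element being read off from Lemma~\ref{exp formula}), to tracking a single valuation modulo $\ell^a$ or $\ell^b$ and to the norm-functoriality of the local reciprocity map. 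I expect this last verification to be the main obstacle: the pairing $(u,\psi)_{L/K}$ is genuinely not a function of the class of $u$ in $\hat{H}^{-1}(G,L^{\times})$ alone, so the computation with the explicit element $u_0$ cannot be bypassed, and this is presumably the point at which Seiriki's original argument needed to be corrected.
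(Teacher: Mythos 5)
Your observation that $\Psi_u(\chi,\chi'):=(u,\chi\chi')_{L/K}-(u,\chi)_{L/K}-(u,\chi')_{L/K}$ is additive in $u$ and vanishes on the subgroup $I_G L^\times$ of elements $\alpha^{\tau-1}$ (by Lemma~\ref{exp formula}, whose formula is visibly linear in the character) is correct, and so is the consequence that $\Psi_u$ descends to $\hat{H}^{-1}(G,L^\times)$. This handles the cyclic case and the coprime-order case. But the remaining step --- producing a $u_0$ generating $\hat{H}^{-1}(G,L^\times)\cong\wedge^2 G$ and showing $\Psi_{u_0}=0$ --- is not carried out, and you yourself flag it as ``the main obstacle.'' As written this is a genuine gap, not a routine detail: since, as you note, $(u,\psi)_{L/K}$ is not a function of the class of $u$ in $\hat{H}^{-1}$, the vanishing of $\Psi_{u_0}$ requires an actual argument that you have not supplied.

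The missing idea, which is exactly what the paper's proof uses, is to enlarge the base to the maximal unramified extension. Set $\Knr$ for the maximal unramified extension of $K$ and $\Lnr=L\Knr$, identify $\Gal(\Lnr/\Knr)$ with $G$ by restriction, and invoke two standard facts: the norm $\N_{\Lnr/\Knr}\colon \Lnr^\times\to\Knr^\times$ is surjective, and $\Lnr^\times$ is cohomologically trivial over $G$, so $\hat{H}^{-1}(G,\Lnr^\times)=0$. Consequently \emph{every} $u\in U_{L/K}$ can be written as $\prod_{i}\alpha_i^{\sigma_i-1}$ with $\alpha_i\in\Lnr^\times$; truncating to a finite unramified $K'/K$ containing the $\alpha_i$ and using invariance of the pairing under unramified base change, Lemma~\ref{exp formula} then gives $(u,\chi)_{L/K}=\sum_i v_{L'}(\alpha_i)\chi(\sigma_i)$, which is manifestly additive in $\chi$. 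In other words, the whole point is that one should not try to compute $\Psi_u$ on the nontrivial group $\hat{H}^{-1}(G,L^\times)$; instead one passes to $\Lnr$ where that obstruction group is zero and the $I_G$-computation you already did applies to \emph{all} of $U_{L/K}$. Your reduction to $L=K_\chi K_{\chi'}$ and the $\ell$-primary decomposition are then unnecessary.
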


\begin{proof}
  Let $\Knr$ denote the maximal unramified extension of $K$. Then $\Lnr := L\Knr$ is the maximal unramified extension of $L$ and we
may identify $\Gal(\Lnr/\Knr)$ with  $G$ via restriction. By \cite[Ch.~V.4, Prop.~7]{Ser79} the norm map 
$\N_{\Lnr/\Knr} \colon L_{\mathrm{nr}}^\times \lra K_{\mathrm{nr}}^\times$ is surjective. By  \cite[Ch.~X.7, Prop.~11]{Ser79} the $G$-module
$L_{\mathrm{nr}}^\times$ is cohomologically trivial, in particular, $\hat{H}^{-1}(G, L_{\mathrm{nr}}^\times) = 0$.

Hence there exist elements $\alpha_1, \ldots, \alpha_r \in \Lnrtimes$ and automorphisms $\sigma_1, \ldots, \sigma_r \in G$ such that
$u = \prod_{i=1}^r \frac{\sigma_i(\alpha_i)}{\alpha_i}$.
Set $L' := L(\alpha_1, \ldots, \alpha_r)$ and $K' := L' \cap \Knr$. Then
\[
(u, \chi)_{L/K} = (u, \chi)_{L'/K'} = \sum_{i=1}^r \left( \frac{\sigma_i(\alpha_i)}{\alpha_i}, \chi \right)_{L'/K'}
\]
and Lemma \ref{exp formula} implies $(u, \chi)_{L/K} = \sum_{i=1}^r v_{L'}(\alpha_i) \chi(\sigma_i)$.
Replacing $\chi$ by $\chi'$ and $\chi\chi'$, respectively,  we obtain similar expressions for $(u, \chi')_{L/K}$ and $(u, \chi\chi')_{L/K}$
and the result follows.
\end{proof}

\begin{lemma}
\label{lemma_translateunramified}
Let $L/K$ be a totally ramified finite abelian extension of degree $n$ and let $\{u_1, \ldots, u_r\} \subseteq \O_{K}^{\times}$
be a finite set of units in $K$. Let $K'/K$ be the unramified extension of degree $n$ and put $L' := K'L$. Then there exist
units $u_1^{\prime}, \ldots,  u_r^{\prime} \in \O_{L^{\prime}}^{\times}$ such that 
$u_i= \N_{L^{\prime}/K^{\prime}}(u_i^{\prime})$ for $i = 1, \ldots, r$.
\end{lemma}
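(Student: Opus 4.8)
The plan is to reduce the statement to surjectivity of a norm map on units in an unramified tower, exactly as in the proof of Proposition \ref{pairing mult}. First I would observe that since $L/K$ is totally ramified of degree $n$ and $K'/K$ is unramified of degree $n$, the compositum $L' = K'L$ satisfies $[L':K'] = n$ (the extensions are linearly disjoint over $K$ since one is totally ramified and the other unramified), $L'/K'$ is totally ramified abelian, and $L'/L$ is unramified of degree $n$. In particular $L'$ is contained in the maximal unramified extension $L_{\mathrm{nr}}$ of $L$, and restriction identifies $\Gal(L'/K')$ with $\Gal(L/K) = G$.

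The key input is that the norm map $\N_{L'/K'}$ is surjective on unit groups. I would deduce this from local class field theory: for the totally ramified abelian extension $L'/K'$, the norm map $\N_{L'/K'} \colon (L')^\times \to (K')^\times$ has image of index $[L':K'] = n$ equal to the full group of $n$-th... — more precisely, by the norm residue isomorphism $\N_{L'/K'}((L')^\times)$ has index $n$ in $(K')^\times$, and since $L'/K'$ is totally ramified, a uniformizer $\pi_{K'}$ of $K'$ remains a uniformizer of $K'$ and $v_{K'}$ restricted to the norm subgroup is still all of $\Ze$ (because $\N_{L'/K'}(\pi_{L'})$ has valuation $f_{L'/K'} = 1$ in $K'$). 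Hence $\N_{L'/K'}((L')^\times) \cdot \langle \pi_{K'} \rangle = (K')^\times$ forces the index-$n$ subgroup to already contain a generator of the valuation, so the cokernel is concentrated in the unit part; but then comparing with the fact that $\O_{K'}^\times$ has index... Rather than belabor this, the cleanest route is: by \cite[Ch.~V.4, Prop.~7]{Ser79} (or its consequence used in the proof of Proposition \ref{pairing mult}), the norm $\N_{L_{\mathrm{nr}}/K_{\mathrm{nr}}} \colon L_{\mathrm{nr}}^\times \to K_{\mathrm{nr}}^\times$ is surjective, and on the unramified towers this surjectivity descends to finite layers giving $\N_{L'/K'} \colon \O_{L'}^\times \to \O_{K'}^\times$ surjective; alternatively one invokes that for the totally ramified extension $L'/K'$ with both residue fields algebraically closed... no — $K'$ is a finite extension. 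I would instead argue directly that $\N_{L'/K'}$ is surjective on units because $L'/K'$ is totally ramified: then $\N_{L'/K'}(\O_{L'}^\times)$ and $\N_{L'/K'}((L')^\times)$ have the same index in $\O_{K'}^\times$ and $(K')^\times$ respectively (namely $n$ and $n$), but a uniformizer of $K'$ is a norm (being the norm of a uniformizer of $L'$ up to a unit, since $f_{L'/K'}=1$), whence $\N_{L'/K'}(\O_{L'}^\times)$ must have index $1$ in $\O_{K'}^\times$.

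Once surjectivity of $\N_{L'/K'} \colon \O_{L'}^\times \to \O_{K'}^\times$ is established, the lemma is immediate: given $u_i \in \O_K^\times \subseteq \O_{K'}^\times$, choose $u_i' \in \O_{L'}^\times$ with $\N_{L'/K'}(u_i') = u_i$, for $i = 1, \ldots, r$. The main obstacle is the bookkeeping in the class-field-theory step — making sure the index computation for the norm subgroup of units is clean and that one really is using only the totally ramified hypothesis together with the matching degree $[K':K] = n = [L:K]$; the rest is formal. I expect the actual write-up to mirror the argument already used in Proposition \ref{pairing mult}, citing \cite[Ch.~V.4, Prop.~7]{Ser79} for the surjectivity of the norm on the unramified closure and then descending.
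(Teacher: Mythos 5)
The central claim of your argument — that the norm map $\N_{L'/K'}\colon\O_{L'}^\times\to\O_{K'}^\times$ is surjective — is false, and this is a genuine gap. For a totally ramified abelian extension $L'/K'$ of degree $n$, local class field theory gives $\O_{K'}^\times/\N_{L'/K'}(\O_{L'}^\times)\simeq G_0(L'/K')=\Gal(L'/K')$, which has order exactly $n$, not $1$. You in fact compute this index yourself (``namely $n$''), but then incorrectly conclude that it ``must have index $1$'': the observation that a uniformizer of $K'$ is a norm shows only that $(K')^\times/\N_{L'/K'}((L')^\times)$ is concentrated in the unit part, i.e.\ is \emph{isomorphic} to $\O_{K'}^\times/\N_{L'/K'}(\O_{L'}^\times)$, not that it annihilates it. Both quotients have order $n$ and there is no contradiction. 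Similarly, the suggestion that surjectivity of $\N_{L_{\mathrm{nr}}/K_{\mathrm{nr}}}$ descends to finite layers is wrong: the cohomological triviality available at the infinite unramified level fails at each finite layer, which is precisely why the unit cokernel is nontrivial there.

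What you are missing — and what the paper's proof hinges on — is that the input units $u_i$ lie in $\O_K^\times$, not merely $\O_{K'}^\times$, and that $[K':K]$ was chosen to equal $n=[L:K]$. The paper compares the two order-$n$ quotients
\[
\O_{K'}^\times/\N_{L'/K'}(\O_{L'}^\times)\ \xrightarrow{\;\N_{K'/K}\;}\ \O_K^\times/\N_{L/K}(\O_L^\times),
\]
which is well defined (norms of norms are norms) and surjective since $K'/K$ is unramified (by \cite[Ch.~V.2]{Ser79}); as a surjection between finite groups of equal order $n$ it is an isomorphism. Then for $u\in\O_K^\times$ one has $\N_{K'/K}(u)=u^n$, which is automatically trivial in the right-hand quotient of order $n$, so $u$ is trivial in the left-hand quotient, i.e.\ $u\in\N_{L'/K'}(\O_{L'}^\times)$. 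Your proposal never uses that $u$ comes from the base field $K$, and without that the statement is simply false: a generic unit of $K'$ is not a norm from $L'$.
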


\begin{proof}
It suffices to prove the lemma for $r=1$. In this case we can simply follow the proof of 
\cite[Lemma~2.4]{Sei15}. We briefly recall the argument.

We write $G_0(L/K)$ and $G_0(L'/K')$ for the inertia subgroups of $L/K$ and $L'/K'$, respectively. 
Then 
\[
G_{0}(L' / K') \stackrel{\simeq}\longleftarrow \O_{K'}^{\times}/ \N_{L'/K'}(\O^{\times}_{L'})
\stackrel{\N_{K'/K}}\longrightarrow \O_{K}^{\times}/ \N_{L/K}(\O^{\times}_{L})  \stackrel{\simeq}\longrightarrow G_{0}(L / K),
\]
where the left and the right isomorphisms are induced by the reciprocity maps of local class field theory. Furthermore,
the middle map is surjective by \cite[Ch. V.2]{Ser79}, hence it is actually an isomorphism of groups of order $n$. Since 
$\N_{K'/K}(u)=u^{n} \in \N_{L/K}(\O^{\times}_{L})$ we deduce $u\in~\N_{L' / K'}(\O_{L'}^{\times})$.
\end{proof}



\begin{proposition}\label{main prop}
Let $L/K$ be a totally ramified finite abelian  extension with $G:= \Gal(L/K)$. 
 Assume that $u \in U_{L/K}$ satisfies $(u, \chi)_{L/K}= 0$  for all characters $ \chi \in \hat{G}$.
Then there exist
\begin{enumerate}[label=\alph*)]
\item a finite unramified extension $K^{\prime}$ of $K$,
\item an integer $r$,
\item units $\betap_{1}, \ldots , \betap_{r} \in \O_{L^{\prime}}^{\times}$ with $L^{\prime}:=L K^{\prime}$ and
\item $\sigmap_{1}, \ldots , \sigmap_{r} \in \Gal(L^{\prime}/K^{\prime})$ 
\end{enumerate}
such that
\[
u = \prod_{i=1}^{r} (\betap_{i})^{\sigmap_{i}-1}.
\]
\end{proposition}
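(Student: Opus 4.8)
The plan is to first reduce to the case where $u$ is a \emph{unit}, and then exploit the cohomological machinery already assembled. We begin by analyzing the ideal generated by $u$. Since $L/K$ is totally ramified, there is a unique prime $\frp_L$ of $L$, and writing $a := v_L(u)$ we have $(u) = \frp_L^a$. The hypothesis that $(u,\chi)_{L/K} = 0$ for \emph{all} $\chi \in \hat G$ will force $a$ to be appropriately divisible: applying Lemma~\ref{exp formula} in reverse, the pairing values detect $v_L$ against characters, and using that $\N_{L/K}(u) = 1$ together with $v_K = \frac{1}{[L:K]} v_L$ on norms gives $[L:K] \mid a$ in a suitable sense — more precisely, first one shows $u$ differs from a genuine unit by a factor of the shape $\tau(\pi_L)/\pi_L$ (which lies in $U_{L/K}$ and, by Lemma~\ref{exp formula}, pairs to $f_{L/K} v_L(\pi_L)\chi(\tau) = \chi(\tau)$, nonzero in general), so one must be careful: the vanishing of all pairings already rules out most valuation behavior, and a short argument with the structure of $G$ (using that characters separate elements) lets us write $u = \pi_L^{c} \cdot w$ with $w \in \O_L^\times$ and with the ``$\pi_L$-part'' itself realized as a product of the form $\prod (\beta_i')^{\sigma_i - 1}$ over a suitable unramified base change — or, alternatively, absorbed by choosing $\pi_L$ to be a norm-compatible uniformizer over an unramified extension. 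So the first step is: \emph{replace $u$ by a unit $w \in \O_L^\times$ that still lies in $U_{L/K}$ and still has trivial pairing, at the cost of an unramified base change and adjoining some $(\beta_i')^{\sigma_i-1}$ terms to the eventual product}.

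Once $u \in \O_L^\times \cap U_{L/K}$, the strategy is to realize $u$ as a norm of a unit from a large unramified extension and then invoke cohomological triviality. Concretely, by Lemma~\ref{lemma_translateunramified} applied over $K$ (or rather, first descending $u$ to a unit of $K$ if possible, else working directly), we pass to the unramified extension $K'/K$ of degree $n := [L:K]$ and set $L' := K'L$; there we can find units whose norms recover the relevant unit data. Now over $K'$, the extension $L'/K'$ is still totally ramified with Galois group identified with $G$, and crucially $\hat H^{-1}(G, L_{\mathrm{nr}}^\times) = 0$ by \cite[Ch.~X.7, Prop.~11]{Ser79}, exactly as in the proof of Proposition~\ref{pairing mult}. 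Since the pairing $(u,\chi)_{L/K}$ vanishes for all $\chi$, and the pairing computes (via Lemma~\ref{exp formula} and the surjectivity of $\N_{\Lnr/\Knr}$ from \cite[Ch.~V.4, Prop.~7]{Ser79}) the obstruction to $u$ lying in the image of $\hat H^{-1}$ inside the \emph{ramified} tower, we conclude $u \in \ker(\N_{L'/K'})$ is a product of elements $\sigma_i(\alpha_i)/\alpha_i$ with $\alpha_i \in L_{\mathrm{nr}}'^\times$. Finally, enlarging $K'$ to contain all the $\alpha_i$ (i.e.\ setting $K'$ to be $L'(\alpha_1,\dots,\alpha_r) \cap \Knr$, which is still unramified over $K$), and then using the unit-improvement from the first step together with Lemma~\ref{lemma_translateunramified} once more to arrange that the $\alpha_i$ can be taken to be \emph{units} $\beta_i' \in \O_{L'}^\times$, yields the claimed expression $u = \prod_{i=1}^r (\beta_i')^{\sigma_i' - 1}$ with $\sigma_i' \in \Gal(L'/K')$.

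I expect the main obstacle to be the bookkeeping in passing from ``$u$ is a product of $\sigma_i(\alpha_i)/\alpha_i$ with $\alpha_i$ arbitrary elements of a large unramified tower'' to ``the $\alpha_i$ can be chosen to be units in $\O_{L'}^\times$ over a \emph{finite} unramified extension $K'$''. The valuations $v_{L'}(\alpha_i)$ need not vanish individually; they only satisfy the global constraint coming from $u$ being a unit (or from the vanishing of all pairings). The resolution is to twist each $\alpha_i$ by an appropriate power of a norm-compatible uniformizer — this is precisely where Lemma~\ref{lemma_translateunramified} is needed, as it supplies units in $\O_{L'}^\times$ whose $\N_{L'/K'}$-norms match prescribed units of $K'$, allowing one to correct the valuation defects $\sigma_i(\alpha_i)/\alpha_i \mapsto \sigma_i(\alpha_i \pi^{m_i})/(\alpha_i \pi^{m_i})$ without changing the ratio, after which the product telescopes into unit form. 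One must also check that only finitely many $\alpha_i$ occur and that the field $K'$ one builds is genuinely of finite degree over $K$ — both are automatic since $\hat H^{-1}$ of a finite group applied to a concrete $u$ produces a finite expression.
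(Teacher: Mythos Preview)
Your overall strategy---write $u = \prod_i \sigma_i(\alpha_i)/\alpha_i$ with $\alpha_i \in \Lnr^\times$ using $\hat H^{-1}(G, \Lnr^\times)=0$, pass to a finite unramified $K'/K$, then upgrade the $\alpha_i$ to units---is viable and in fact more direct than the paper's proof, which proceeds by induction on the number of cyclic factors of $G$, peeling off one cyclic direct factor $H$ at a time, applying the inductive hypothesis to $\N_{L/L^H}(u)$, lifting via Lemma~\ref{lemma_translateunramified}, and handling the cyclic remainder by Hilbert~90 together with Lemma~\ref{lemma_auxlemmatomainlemma}. But two points in your sketch need correction.

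First, a minor one: your opening reduction ``replace $u$ by a unit'' is unnecessary. Since $L/K$ is totally ramified, $v_K(\N_{L/K}(\alpha)) = v_L(\alpha)$ for all $\alpha \in L^\times$, so $\N_{L/K}(u)=1$ already forces $u \in \O_L^\times$.

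Second, and this is the real gap: your proposed resolution of the ``main obstacle'' does not work as written. If $\pi$ is a uniformizer of $K'$ then $\sigma_i(\alpha_i\pi^{m_i})/(\alpha_i\pi^{m_i}) = \sigma_i(\alpha_i)/\alpha_i$ indeed, but $v_{L'}(\alpha_i)$ shifts only by multiples of $e_{L'/K'}=|G|$, which does not kill an arbitrary residue; if instead $\pi$ is a uniformizer of $L'$, the ratio \emph{does} change. Lemma~\ref{lemma_translateunramified} is about lifting units along norm maps and is not the relevant tool. The correct argument runs as follows. Set $a_i := v_{L'}(\alpha_i)$. By Lemma~\ref{exp formula} (exactly as in the proof of Proposition~\ref{pairing mult}) the hypothesis gives $\sum_i a_i\,\chi(\sigma_i) = 0$ in $\Qu/\Ze$ for every $\chi \in \hat G$, which says precisely that $\prod_i \sigma_i^{a_i} = 1$ in $G$. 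Writing $I_G \subseteq \Ze[G]$ for the augmentation ideal, the canonical isomorphism $I_G/I_G^2 \simeq G$, $(\sigma-1)\mapsto \sigma$, then forces $\sum_i a_i(\sigma_i - 1) \in I_G^2$. Now fix a uniformizer $\pi_{L'}$ of $L'$ and write $\alpha_i = \pi_{L'}^{a_i} w_i$ with $w_i \in \O_{L'}^\times$; then
\[
u \;=\; \prod_i \bigl(\sigma_i(\pi_{L'})/\pi_{L'}\bigr)^{a_i}\cdot \prod_i w_i^{\sigma_i - 1},
\]
and since $(\tau-1)\cdot\pi_{L'} := \tau(\pi_{L'})/\pi_{L'} \in \O_{L'}^\times$ for every $\tau\in G$, the first factor (which is $\pi_{L'}$ acted on by an element of $I_G^2$) already lies in $\prod_j \beta_j^{\tau_j-1}$ with $\beta_j \in \O_{L'}^\times$. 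This yields the desired expression for $u$ with all $\beta'_i$ units, without any induction and without Lemma~\ref{lemma_translateunramified}.
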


\begin{proof}
We follow the proof of \cite[Lemma~2.5]{Sei15} and prove the proposition by induction on the number of generators of $G$.
If $G$ is trivial, the claim is clear. For a non-trivial group $G$ we write $G = \tilde{G} \times H$ with a cyclic subgroup $H$
and apply the inductive hypothesis to the extension $M/K$ where $M := L^H$.  To that end we put
$u_M := \N_{L/M}(u)$. Then $u_M \in U_{M/K}$ and we first note that 
$(u_{M}, \psi)_{M/K}=0$ for all $\psi \in \widehat{G / H}$ because by the very definition of the pairing we have
\hbox{$(u, \chi)_{L/K} = (u_M, \chi)_{M/K}$} for all $\chi \in \hat{G}$ with $H \subseteq \ker(\chi)$.

By induction we obtain a finite unramified extension $K^{\prime}/K$, an integer $r^{\prime}$, units 
$\beta_{1}^{\prime}, \ldots , \beta_{r^{\prime}}^{\prime} \in \O_{M^{\prime}}^{\times}$ (where 
$M^{\prime}=M K^{\prime}$) and automorphisms 
$\sigma_{1}^{\prime}, \ldots , \sigma_{r^\prime}^{\prime} \in \Gal(M^{\prime}/K^{\prime})$ 
such that
\begin{equation}\label{long proof eq 1}
u_M = \N_{L/M}(u) = \prod_{i=1}^{r'} (\beta^{\prime}_{i})^{\sigma^{\prime}_{i}-1}.
\end{equation}
By applying Lemma \ref{lemma_translateunramified} to $\beta_1^\prime, \ldots, \beta_{r^\prime}^\prime$ 
and the extension $L'/M'$ we obtain an unramified extension
$M^{\prime\prime} / M^\prime$, elements $\betapp_{1}, \ldots , \betapp_{r^\prime} \in \O_{L^{\prime \prime}}^{\times}$ 
(where $L^{\prime \prime}= L^{\prime} M^{\prime \prime}$) such that
\begin{equation}\label{long proof eq 2}
\beta_{i}^{\prime} = \N_{L^{\prime \prime}/M^{\prime \prime}}(\betapp_{i}) 
\end{equation}
With  $K^{\prime \prime}/K^\prime$ denoting the unramified extension of degree $[M'' : M']$ we have the following diagram
\[
\begin{tikzcd}[column sep=small]
{}&& L^{\prime \prime} \\
& L^{\prime} \arrow[-]{ur} & M^{\prime \prime} \arrow[-]{u} \\
L \arrow[-]{ur} & M^{\prime} \arrow[-]{u} \arrow[-]{ur} & K^{\prime \prime} \arrow[-]{u} \\
M \arrow[-]{u}{} \arrow[-]{ur} & K^{\prime} \arrow[-]{u} \arrow[-]{ru}{} & \\
K \arrow[-]{u}{} \arrow[-]{ru}{}&&
\end{tikzcd}
\]
As a consequence, restriction induces  a canonical epimorphism 
$\Gal(L^{\prime \prime}/K^{\prime \prime}) \twoheadrightarrow \Gal(M^{\prime}/K^{\prime})$
and we may choose lifts $\sigmapp_{1}, \ldots,  \sigmapp_{r'} \in \Gal(L''/K'')$  of the elements
$\sigmap_{1}, \ldots,  \sigmap_{r'} \in \Gal(M'/K')$. We set
\[
u^{\prime \prime}:= u \cdot \left( \prod_{i=1}^{r^{\prime}} (\betapp_i)^{\sigmapp_i -1} \right)^{-1} \in \O_{L^{\prime\prime}}^{\times}.
\]
Then a straightforward computation using (\ref{long proof eq 1}) and (\ref{long proof eq 2}) 
shows that $u'' \in U_{L''/M''}$. We let $\tau$ denote a generator of $H$ and apply Hilbert's Theorem 90 to obtain an
element  $b^{\prime \prime} \in (L^{\prime \prime})^{\times}$ such that
 $u^{\prime \prime}= (b^{\prime \prime})^{\tau -1}$, and hence
\[
u = (b'')^{\tau - 1} \cdot  \prod_{i=1}^{r^{\prime}} (\betapp_i)^{\sigmapp_i -1}.
\]
Since we can adapt $b''$ by elements of $(M'')^\times$, this proves the proposition, provided that we can show the following claim.
\begin{claim_wn} There exists an element $a'' \in (M'')^\times$ such that $a''b'' \in \calO_{L''}^\times$. \end{claim_wn}
For the proof of the claim we first note that 
 \begin{equation}\label{long proof eq 3}
(u'', \psi)_{L''/M''}=0 \text{ for all } \psi \in \widehat{\Gal(L''/M'')}.
\end{equation}
Indeed, if we define $\tilde\psi \in \widehat{\Gal(L''/K'')}$  by $\tilde\psi |_H = \psi$ and $\tilde\psi |_{\tilde{G}} = 1$,
then Lemma \ref{lemma_auxlemmatomainlemma} below shows that 
$ (u^{\prime \prime}, \psi)_{L^{\prime \prime}/M^{\prime \prime}} = 
(u^{\prime \prime}, \tilde{\psi})_{L^{\prime \prime}/K^{\prime \prime}}$. Furthermore, 
\begin{align*}
(u^{\prime \prime}, \tilde{\psi})_{L^{\prime \prime}/K^{\prime \prime}} 
&= (u, \tilde{\psi})_{L^{\prime \prime}/K^{\prime \prime}} - \sum_{i=1}^{r^{\prime}}\left(\frac{\sigma^{\prime \prime}_{i}(\betapp_{i})}{\betapp_{i}}, \tilde{\psi} \right)_{L^{\prime \prime}/K^{\prime \prime}} \\
&= (u, \tilde{\psi})_{L/K} - \sum_{i=1}^{r^{\prime}}\left(\frac{\sigma^{\prime \prime}_{i}(\betapp_{i})}{\betapp_{i}}, \tilde{\psi} \right)_{L^{\prime \prime}/K^{\prime \prime}} \\
&= 0,
\end{align*}
where the second equality holds because $K''/K$ is unramified and the last equality follows from $(u, \tilde{\psi})_{L/K} = 0$ (by assumption) and \hbox{Lemma \ref{exp formula}}.

We are finally ready to prove the above claim. Let $\chi \in \Gal(L''/M'')$ 
be defined by $\chi(\tau) = \frac{1}{[L'':M'']} + \Ze$. We write $e_{L''/M''}$ for the ramification index of $L''/{M^{\prime \prime}}$.
By (\ref{long proof eq 3}) and the definition of the pairing 
we get 
\[
0 = (u^{\prime \prime}, \psi)_{L^{\prime \prime}/M^{\prime \prime}} = v_{M''}(b'') = \frac{1}{e_{L''/M''}}v_{L''}(b'')
\]
in  $\frac{1}{e_{L''/M''}}\Ze / \Ze$ and this implies that $e_{L''/M''}$ divides $v_{L''}(b'')$. This, in turn, guarantees the 
existence of $a''$ as in the above claim.
\end{proof}

\begin{lemma}
\label{lemma_auxlemmatomainlemma}
Let $L/K$ be a totally ramified finite abelian extension with Galois group $G$. 
Suppose that $G = \tilde{G} \times H$ with a cyclic subgroup $H$. Set $M:=L^{H}$ and $\tilde{M}:= L^{\tilde{G}} $.
For $\psi \in \widehat{H}$ we define $\tilde{\psi} \in \hat{G}$ by $\tilde{\psi}_{\mid H} = \psi$ and $\tilde{\psi}_{\mid \tilde{G}}=1$. 
Then $(w, \tilde{\psi})_{L/K} =(w, \psi)_{L/M}$  for all  $w \in U_{L/M}$ and all $\psi \in \hat{H}$.
\end{lemma}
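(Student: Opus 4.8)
The plan is to write out the definition of each pairing, identify the cyclic subextension that intervenes on each side, match the two up, and then compare normalized valuations in a tower of totally ramified extensions.

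Put $d := \ord(\psi) = \ord(\tilde\psi)$ and write $N' := L^{\ker(\psi)}$ (where $\ker(\psi) \subseteq H = \Gal(L/M)$) and $N := L^{\ker(\tilde\psi)}$; these are the fixed fields entering Definition~\ref{def_pairing} for the pairings $(\,\cdot\,,\psi)_{L/M}$ and $(\,\cdot\,,\tilde\psi)_{L/K}$, respectively. Since $\ker(\tilde\psi) = \tilde G \times \ker(\psi)$ inside $G = \tilde G \times H$, one checks directly that $N \subseteq N'$, that $N' = NM$, and that $N \cap M = K$; because $L/K$ is abelian, restriction then gives a group isomorphism $\rho \colon \Gal(N'/M) \stackrel{\sim}{\lra} \Gal(N/K)$, under which the faithful characters $\bar\psi$ of $\Gal(N'/M)$ and $\bar{\tilde\psi}$ of $\Gal(N/K)$ induced by $\psi$ and $\tilde\psi$ correspond, i.e.\ $\bar{\tilde\psi} \circ \rho = \bar\psi$ (both send the restriction of $h \in H$ to $\psi(h)$). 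Moreover $N'/K$, a subextension of the totally ramified $L/K$, is totally ramified, hence so are $N/K$, $N'/N$ and $N'/M$, and $[N:K] = [N':M] = d$.

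Next I would run the two definitions in parallel. Fix $\sigma \in H$ with $\psi(\sigma) = 1/d + \Ze$; then $\tilde\psi(\sigma) = 1/d + \Ze$ as well, $\sigma|_{N'}$ generates $\Gal(N'/M)$, and $\rho(\sigma|_{N'}) = \sigma|_N$ generates $\Gal(N/K)$. Set $w_1 := \N_{L/N'}(w)$; from $w \in U_{L/M}$ one gets $\N_{N'/M}(w_1) = \N_{L/M}(w) = 1$ and also $\N_{L/K}(w) = \N_{M/K}(1) = 1$, so $w_1 \in U_{N'/M}$ and $w \in U_{L/K}$. By Hilbert~90 for the cyclic extension $N'/M$, write $w_1 = (b')^{\sigma|_{N'} - 1}$ with $b' \in (N')^\times$, so that $(w,\psi)_{L/M} = v_M(b') + \Ze$. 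On the other hand $\N_{L/N}(w) = \N_{N'/N}(w_1)$, and applying $\N_{N'/N}$ to $w_1 = (b')^{\sigma|_{N'}-1}$ while using that $\Gal(N'/K) = \Gal(N'/N) \times \Gal(N'/M)$ is abelian (so $\N_{N'/N}$ commutes with $\sigma|_{N'}$, and $N$ is stable under $\Gal(N'/K)$) gives $\N_{N'/N}(w_1) = b^{\sigma|_N - 1}$ with $b := \N_{N'/N}(b') \in N^\times$. Hence $b$ is an admissible choice in the definition of $(w,\tilde\psi)_{L/K}$, so that $(w,\tilde\psi)_{L/K} = v_K(b) + \Ze = v_K(\N_{N'/N}(b')) + \Ze$.

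It remains to compare $v_M(b')$ and $v_K(\N_{N'/N}(b'))$. Since $N'/N$ is totally ramified, $v_N(\N_{N'/N}(b')) = v_{N'}(b')$, and since $N/K$ is totally ramified of degree $d$ this gives $v_K(\N_{N'/N}(b')) = \tfrac1d v_{N'}(b')$; likewise $N'/M$ totally ramified of degree $d$ gives $v_M(b') = \tfrac1d v_{N'}(b')$. Therefore both sides of the asserted identity equal $\tfrac1d v_{N'}(b') + \Ze$. The steps that I expect to require the most care are the identifications $N' = NM$, $N \cap M = K$ and $\bar{\tilde\psi}\circ\rho = \bar\psi$ in the second paragraph and, above all, the claim in the third paragraph that $\N_{N'/N}$ carries the coboundary $(b')^{\sigma|_{N'}-1}$ to the coboundary $b^{\sigma|_N-1}$; this is precisely where the abelianness of the whole configuration enters. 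Once that is in place, the valuation bookkeeping in the last step is routine.
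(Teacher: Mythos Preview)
Your proof is correct. Both you and the paper run the two definitions in parallel, push the Hilbert~90 witness through a norm map, and finish with a valuation comparison in a totally ramified tower.

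The only real difference is in the setup: the paper first invokes Proposition~\ref{pairing mult} (multiplicativity in the character) to reduce to the generator $\chi$ of $\widehat{H}$, in which case your $N'$ becomes $L$ and your $N$ becomes $\tilde M$, so the intermediate field $N'$ disappears and the norm step is simply $\N_{L/\tilde M}$. You instead treat an arbitrary $\psi$ directly, introducing $N' = L^{\ker\psi}$ and $N = L^{\ker\tilde\psi}$. This costs a little extra bookkeeping (the identifications $N' = NM$, $N\cap M = K$, and the compatibility $\bar{\tilde\psi}\circ\rho = \bar\psi$), but it makes the argument self-contained and independent of Proposition~\ref{pairing mult}. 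Either route is fine; the paper's is marginally cleaner, yours avoids an external reference.
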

\begin{proof}
Let $\ll \tau \rl = H$ and define $\chi \in \widehat{H}$ by $\chi(\tau)= \frac{1}{|H|}+\Ze$.  By Proposition \ref{pairing mult} it suffices to
show that $(w, \tilde{\chi})_{L/K} =(w, \chi)_{L/M}$  for all  $w \in U_{L/M}$.

Let $\beta \in L^{\times}$ such that $\frac{\tau (\beta)}{ \beta}=w$. 
Then $(w, \chi)_{L/M} = v_{M}(\beta)$ in $\Qu/\Ze$. Since $\ker(\tilde{\chi})= \tilde{G}$ we have
$K_{\tilde\chi} = \tilde{M}$ and 
\[
w_{{\tilde{\chi}}}=\N_{L / \tilde{M}}(w) = \frac{\tau(\N_{L / \tilde{M}}(\beta))}{ \N_{L / \tilde{M}}(\beta)}.
\] 
Therefore, by definition of the pairing,  $(w, \tilde{\chi})_{L/K} = v_{K}(\N_{L/ \tilde{M}}(\beta))$ and a straightforward computation with
valuations shows that $v_{K}(\N_{L/ \tilde{M}}(\beta)) = v_{M}(\beta)$.
\end{proof}

Later we will need the following definition. 

\begin{definition}
\label{def_pairing_inf}
Let $H$ be a local field and $H_{\infty}/H$ an infinite abelian extension. Let \hbox{$(u_{N})_{N} \in \varprojlim_{N}N^\times$}, where $N/H$ varies over the
finite subextensions of $H_\infty/H$, be a norm-coherent sequence with $\N_{N/H}(u_N) = 1$. Furthermore, let
$\chi$ be a character of finite order of  $\Gal(H_{\infty}/H)$. Choose $N$ such that $H_{\chi} \subset N$. Then we  set
\[
u_{\chi}:= \N_{N/H_{\chi}}(u_{N})
\]
and define a  pairing for the extension $H_{\infty}/H$ by 
\[
(u, \chi)_{H_{\infty}/H}:=(u_{\chi}, \chi)_{H_{\chi}/H}.
\]
\end{definition}
{It is easy to see that multiplicativity in both variables follows from the finite dimensional case.}

\subsection{Relative Lubin-Tate Groups and Coleman power series}\label{Rel Lubin-Tate}

In this subsection we introduce the notion of relative Lubin-Tate formal groups and 
also recall some results of the theory of Coleman power series. All results presented here can be found in \cite{dS87}.

Let $H$ be a finite extension of $\Q_{p}$, let ${\O_H}$ and ${\frp_H}$ be its valuation ring and valuation ideal, respectively.
We let $q$ denote the cardinality of the residue class field $\O_{H}/\frp_{H}$. We fix an integer $d > 0$ and let $H^{\prime}$ 
be the unramified extension of $H$ of degree $d$. We write $\varphi \in \Gal(H'/H)$ for the arithmetic Frobenius element.

We write ${\O_{H^{\prime}}}$ and ${\frp_{H^{\prime}}}$ for the  valuation ring and valuation ideal in $H'$ and fix an element $\xi \in H^\times$ with $v_H(\xi) = d$.
We set
\[
\mathcal{F}_{\xi}:= \{ f \in \O_{{H^{\prime}}}[[T]] :  
f \equiv \pi^{\prime} T (\bmod \deg 2), \; \N_{H^{\prime}/H}(\pi^{\prime})=\xi, \; f \equiv T^{q} (\bmod \frp_{{H^{\prime}}}[[T]]) \}
\]
and recall from \cite[Ch.~I, Thm~1.3]{dS87} that for each $f \in \mathcal{F}_\xi$ there exists a unique one-dimensional commutative 
formal group law $F_{f} \in \O_{{H^{\prime}}}[[X,Y]]$  satisfying \hbox{$F_f^\varphi \circ f = f \circ F_{f}$}. 
We call $F_{f}$  a \emph{relative} Lubin Tate group (relative to the extension $H^{\prime} / H$). The case $d=1$ corresponds to classical
Lubin-Tate formal groups. 

Let $f, g \in \calF_{\xi}$ with $f=\pi_{1} T + \ldots$ and $g=\pi_{2}T+ \ldots$. 
For an element $a \in \O_{H^{\prime}}$ such that $a^{\varphi -1} = \pi_{2}/\pi_{1}$ there is a 
power series $[a]_{f,g} \in \O_{H^{\prime}}[[T]]$ uniquely determined by the properties of  \cite[Ch. I, Prop. 1.5]{dS87}.
If $f=g$ we write $[a]_f$ in place of $[a]_{f,f}$ and note that the map  $\O_{{H}} \lra \End(F_{f}), a \mapsto [a]_f$, is an injective ring homomorphism.

Let $H^{c}$ be  the algebraic closure of $H$ and write $\frp_{H^{c}}$ for its valuation ideal. Then we get an $\O_{{H}}$-module structure on 
$\frp_{H^{c}}$ by setting
\[
x +_{f} y := F_{f}(x,y) \text{ { and } } a \cdot x := [a]_{f}(x)
\]
for $x,y \in \frp_{H^{c}}$ and $a \in \O_{{H}}$.

For an integer $n \ge 0$ and $f \in \calF_\xi$ we set $f^{(n)} := \varphi^{n-1}(f) \circ \cdots \circ \varphi(f) \circ f$.
Let $\pi$ be a prime element of $\O_H$. We define
\[
W_{f}^{n}:= \{ \omega \in  \frp_{H^{c}} \mid [\pi^{n}]_{f}(\omega)=0 \}
= \{ \omega \in  \frp_{H^{c}} \mid f^{(n)}(\omega) = 0 \}
\]
and call  $W_{f}^{n}$ the \emph{group of division points of level $n$} of $F_{f}$. 
We also set $\widetilde{W}^{n}_{f}:= W^{n}_{f} \setminus W^{n-1}_{f}$ and $W_{f}= \bigcup_{n} W^{n}_{f}$. So
 $W_{f}$ is the subgroup of all torsion points of $F_f$.

We fix $f \in \calF_\xi$ and set  $H_n^\prime := H^{\prime}(W^{n+1}_{f})$. Note that  $H_n^\prime$ does not 
depend on the choice of $f \in \mathcal{F}_{\xi}$. 
It is a totally ramified finite abelian extension of $H^{\prime}$ of degree $(q-1)q^{n}$.
Any $\omega_{n+1} \in \widetilde{W}^{n+1}_{f}$ generates $H'_n$ over $H^{\prime}$ 
and, in addition,  is a prime element in $\O_{H'_n}$. For the ring of integers in $H_n'$ we obtain $\O_{H_n'} = \O_{{H^{\prime}}}[\omega_{n+1}]$ for each
$\omega_{n+1} \in \widetilde{W}^{n+1}_{f}$.
In this context local class field theory is very explicit. The reciprocity map $\rec_H$
induces a group isomorphism $\left( \O_{H} / \frp^{n+1}\right)^{\times} \xrightarrow{\simeq} \Gal(H_n^{\prime}/H^{\prime}), 
u  \mapsto \sigma_{u} $ with $\sigma_{u}(\omega)=[u^{-1}]_{f}(\omega)$ for all $\omega \in W_{f}^{n+1}$, see {\cite[Ch. I, Prop.~1.8]{dS87}}.

In the following we introduce the \emph{Coleman norm operator} and recall some of its properties. Let $R= \O_{{H^{\prime}}}[[T]]$ be the ring of 
power series with coefficients in $\O_{{H^{\prime}}}$. By {\cite[Ch.~I, Prop.~2.1]{dS87} there exists a unique multiplicative operator
$\NC =  \NC_{f} \colon R \rightarrow R$ such that
\[
\NC h \circ f = \prod_{\omega \in W^{1}_{f}} h(T +_f \omega) 
\]
for all  $h \in R$.

\begin{proposition}\label{prop_propertiescolemannormop} (\cite[Ch.~I, Prop.~2.1]{dS87})
The Coleman norm operator has the following properties:
\begin{enumerate}[label=\alph*)]
\item $\NC h \equiv h^{\varphi} (\bmod \frp_{{H^{\prime}}})$.
\item $\NC_{\varphi(f)}= \varphi \circ \NC_{f} \circ \varphi^{-1}$.
\item Let $\NC_{f}^{(i)} := \NC_{\varphi^{i-1}(f)} \circ \cdots \circ \NC_{\varphi(f)} \circ \NC_{f}$. Then 
\[
\left( (\NC^{(i)}_{f} h) \circ f^{(i)} \right)(T) = \prod_{\omega \in W_{f}^{i}} h(T +_f \omega).
\]
\item If $h \in R$, $h \equiv 1 \bmod (\frp_{{H^{\prime}}})^ i$ for $i \geq 1$, then $\NC_f h \equiv 1 \left(\bmod (\frp_{{H^{\prime}}})^{ i+1}\right)$.
\end{enumerate}
\end{proposition}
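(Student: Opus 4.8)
The plan is to take the existence and uniqueness of $\NC_f$ as given (it is stated just above) and to prove the four properties in turn; (b) and (c) are purely formal, while (a) and (d) rest on the single fact that the division points in $W_f^1$ reduce to $0$ modulo the maximal ideal. Throughout I would set $R' := \O_{H_0'}[[T]]$, use that $R'$ is free over $R$ (because $\O_{H_0'} = \O_{H'}[\omega]$ is free of rank $q-1$ over $\O_{H'}$ for $\omega \in \widetilde{W}_f^1$), and record two bookkeeping facts: $R \cap \frp_{H'}^j R' = \frp_{H'}^j R$ for every $j \ge 0$, and $\frp_{H'}^i \frp_{H_0'} \cap \O_{H'} = \frp_{H'}^{i+1}$ (the second because $H_0'/H'$ is totally ramified of degree $q-1$). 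I would also use freely the defining identity $(\NC_f h)(f(T)) = \prod_{\omega \in W_f^1} h(T +_f \omega) =: g_h(T)$, together with the observation that $g_h \in R$: the group $\Gal(H_0'/H')$ fixes $\O_{H'}$, hence the coefficients of $h$ and of $F_f$, and it permutes $W_f^1$, so it fixes $g_h$.

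For (b), one first checks $\varphi(f) \in \calF_\xi$, $F_{\varphi(f)} = F_f^\varphi$, and $W_{\varphi(f)}^1 = \tilde\varphi(W_f^1)$ for an extension $\tilde\varphi$ of $\varphi$ to $H^c$. Applying $\tilde\varphi$ coefficientwise to the identity $(\NC_f(\varphi^{-1}h))(f(T)) = \prod_{\omega \in W_f^1}(\varphi^{-1}h)(T +_f \omega)$ turns the left side into $\varphi\big(\NC_f(\varphi^{-1}h)\big)\big(\varphi(f)(T)\big)$ and the right side into $\prod_{\omega' \in W_{\varphi(f)}^1} h(T +_{\varphi(f)} \omega') = (\NC_{\varphi(f)}h)(\varphi(f)(T))$; since composition with $\varphi(f)$ is injective on $\O_{H'}[[T]]$, this yields $\varphi(\NC_f(\varphi^{-1}h)) = \NC_{\varphi(f)}h$, which is (b). For (c) I would induct on $i$, the case $i=1$ being the definition of $\NC_f$. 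Recall that $f^{(i-1)}$ is a homomorphism $F_f \to F_{\varphi^{i-1}(f)}$ inducing a surjection $W_f^i \twoheadrightarrow W_{\varphi^{i-1}(f)}^1$ with kernel $W_f^{i-1}$. Writing $g := \NC_f^{(i-1)}h$, the definition of $\NC_{\varphi^{i-1}(f)}$ gives
\[
\big((\NC_f^{(i)}h) \circ f^{(i)}\big)(T) = \prod_{\eta \in W_{\varphi^{i-1}(f)}^1} g\big(f^{(i-1)}(T) +_{\varphi^{i-1}(f)} \eta\big).
\]
Choosing for each $\eta$ a preimage $\omega_\eta \in W_f^i$ under $f^{(i-1)}$ and using that $f^{(i-1)}$ is a homomorphism, the $\eta$-th factor equals $(g \circ f^{(i-1)})(T +_f \omega_\eta)$, which by the inductive hypothesis is $\prod_{\omega \in W_f^{i-1}} h(T +_f \omega_\eta +_f \omega)$. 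As $\eta$ and $\omega$ vary, $\omega_\eta +_f \omega$ runs exactly once through $W_f^i$, so the total product is $\prod_{\omega' \in W_f^i} h(T +_f \omega')$, which is (c).

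For (a) and (d) I would use that every $\omega \in W_f^1$ lies in $\frp_{H_0'}$ (the elements of $\widetilde{W}_f^1$ are prime elements of $\O_{H_0'}$), so $T +_f \omega = F_f(T,\omega) \equiv T \pmod{\frp_{H_0'}}$, whence $g_h \equiv h^q \pmod{\frp_{H_0'}}$, and then $g_h \equiv h^q \pmod{\frp_{H'}}$ by the first bookkeeping fact. Reducing $(\NC_f h)(f(T)) = g_h$ modulo $\frp_{H'}$ and using $f \equiv T^q \pmod{\frp_{H'}}$ and $\bar h(T)^q = \overline{h^\varphi}(T^q)$ (since $\varphi$ induces the $q$-power Frobenius on $\O_{H'}/\frp_{H'}$) gives $\overline{\NC_f h}(T^q) = \overline{h^\varphi}(T^q)$, hence (a). For (d), write $h = 1 + a$ with $a \in \frp_{H'}^i R$; in $g_h = \prod_\omega (1 + a(T +_f \omega))$ every term with at least two factors lies in $\frp_{H'}^{2i} R' \subseteq \frp_{H'}^{i+1} R'$ (using $i \ge 1$), so $g_h \equiv 1 + \sum_\omega a(T +_f \omega) \pmod{\frp_{H'}^{i+1} R'}$. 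Splitting $\sum_\omega a(T +_f \omega) = q\,a(T) + \sum_\omega\big(a(T +_f \omega) - a(T)\big)$: the first summand is in $\frp_{H'}^{i+1}R$ because $q \in \frp_{H'}$; each difference lies in $\frp_{H'}^i \frp_{H_0'} R'$ because $F_f(T,\omega) - T \in \frp_{H_0'}R'$, and the sum, being $\Gal(H_0'/H')$-invariant, lies in $R \cap \frp_{H'}^i \frp_{H_0'}R' = \frp_{H'}^{i+1}R$ by the second bookkeeping fact. Thus $(\NC_f h)(f(T)) \equiv 1 \pmod{\frp_{H'}^{i+1}R}$, and writing $\NC_f h = 1 + \beta$ with $\beta \in R$, this reads $\beta(f(T)) \in \frp_{H'}^{i+1}R$; comparing $T$-adic coefficients modulo $\frp_{H'}^{i+1}$ then forces $\beta \in \frp_{H'}^{i+1}R$, i.e. (d).

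The step I expect to be the main obstacle is the very last one in (d): deducing a congruence for $\NC_f h$ from one for $(\NC_f h)(f(T))$ requires that substitution by $f$ be injective modulo $\frp_{H'}^{i+1}$, which hinges on $f(T)$ being a non-zero-divisor in $(\O_{H'}/\frp_{H'}^{i+1})[[T]]$ — a genuine Weierstrass-preparation input (one factors $f$ as a unit times a distinguished polynomial of degree $q$) rather than a formal manipulation. Everything else is careful bookkeeping between the totally ramified layer $H_0'$ and its base $H'$.
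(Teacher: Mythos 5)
The paper gives no proof of this proposition; it simply cites \cite[Ch.~I, Prop.~2.1]{dS87}, so there is nothing in the text to compare your argument against. Your proof is correct and is essentially the standard one from de Shalit/Coleman, carried out in the relative setting with the same decomposition into a formal part (b)--(c) and a ramification-theoretic part (a),(d). Two small remarks.

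In (a), passing from $g_h \equiv h^q \pmod{\frp_{H_0'}R'}$ to $g_h \equiv h^q \pmod{\frp_{H'}R}$ uses $R\cap\frp_{H_0'}R'=\frp_{H'}R$, which is your \emph{second} bookkeeping fact at $i=0$ (total ramification gives $\frp_{H_0'}\cap\calO_{H'}=\frp_{H'}$), not the first, which concerns powers of $\frp_{H'}$ extended to $R'$. More substantively, your caveat at the end of (d) is well placed: one does need to show that $\beta\circ f\in\frp_{H'}^{i+1}R$ forces $\beta\in\frp_{H'}^{i+1}R$, and it is not just bookkeeping. However, the cleanest route is a d\'evissage to the residue field rather than Weierstrass preparation, and it avoids having to check separately that a distinguished polynomial over the Artinian quotient is a non-zero-divisor. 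Set $A:=\calO_{H'}/\frp_{H'}^{i+1}$ with maximal ideal $\frm_A$; since $A$ is a quotient of a DVR one has $\mathrm{Ann}_A(\pi^j)=\frm_A^{\,i+1-j}$ for the image $\pi$ of a uniformizer and $0\le j\le i+1$. If $\bar\beta\circ\bar f=0$ in $A[[T]]$ with $\bar\beta\neq 0$, pick the largest $j\le i$ with $\bar\beta\in\frm_A^{\,j}A[[T]]$ and write $\bar\beta=\pi^{j}\gamma$, $\gamma\not\equiv 0\ (\mathrm{mod}\ \frm_A)$. Then
\[
\pi^{j}\bigl(\gamma\circ\bar f\bigr)=0\quad\Longrightarrow\quad \gamma\circ\bar f\in\frm_A^{\,i+1-j}A[[T]]\subseteq\frm_A A[[T]],
\]
and reducing modulo $\frm_A$ gives $\overline{\gamma}(T^q)=0$ over the residue field with $\overline{\gamma}\neq 0$, a contradiction. (The same d\'evissage also proves, if you prefer your route, that $\bar f$ is a non-zero-divisor in $A[[T]]$; but one then still needs the observation that $\bar f(0)=0$ to conclude that substitution by a non-zero-divisor is injective, so the direct argument is shorter.)
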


{
Let $\alpha = (\alpha_{n}) \in \varprojlim_{} (H_n^{\prime})^\times$ be a norm-coherent sequence. 
We fix  $\omega_{i} \in \widetilde{W}^{i}_{\varphi^{-i}(f)} $ such that  $(\varphi^{-i}f)(\omega_{i})= \omega_{i-1}$.
There is a unique integer $\nu(\alpha)$ such that $\alpha_{n} \O_{H_{n}^{\prime}}=\frp_{H_{n}^{\prime}}^{\nu(\alpha)}$ for all $n \ge 0$.
By \cite[Ch.~I, Thm~2.2]{dS87}  there exists a unique power series $\Col_{\alpha} \in T^{\nu(\alpha)} \cdot \O_{H^{\prime}}[[T]]^{\times}$ such that
\begin{equation}\label{def eq Col}
(\varphi^{-(i+1)} \Col_{\alpha})(\omega_{i+1})= \alpha_{i}
\end{equation}
 for all  $i \geq 0$.
 }
The power series $\Col_{\alpha}$ is called the \emph{Coleman power series} associated to $\alpha$.
We recall that by \cite[Ch.~I, Cor.~2.3 (i)]{dS87} Coleman power series are multiplicative in $\alpha$, i.e.
\[
\Col_{\alpha \alpha^{\prime}}= \Col_{\alpha} \cdot \Col_{\alpha^{\prime}}
\]
for norm-coherent sequences $\alpha, \alpha' \in \varprojlim_{n} (H_n^{\prime})^\times$.

\begin{remark}
If we fix $\omega_{n} \in \widetilde{W}^{n}_{\varphi^{-n}(f)}$ such that $(\varphi^{-n}f)(\omega_{n})=\omega_{n-1}$ for $1 \leq n < \infty$, 
then we call $\omega = (\omega_{n})_{n \geq 0}$ a \emph{generator of the Tate module of $F_{f}$}. Note 
that each $\omega_{n}$ is a division point on $F_{\varphi^{-n}(f)} = F_f^{\varphi^{-n}}$. 
\end{remark}

We set $H^{\prime}_{\infty}:= \bigcup_{n}H_n^{\prime}$ and 
let $u=(u_{n}) \in \varprojlim \O^{\times}_{H_n^{\prime}}$ be a norm-coherent sequence of units. 
For later reference we recall the following lemma.

\begin{lemma}\label{lemma_ColpowerseriesDS87p21}
For $\sigma \in \mathcal{G} := \Gal(H_\infty'/H)$, there exits a unique isomorphism $h: F_{f} \simeq F_{\sigma(f)}$ such 
that $h(\omega)= \sigma(\omega)$ for all $\omega \in W_{f}$. This $h$ is of the form 
$[\kappa(\sigma)]_{f, \sigma(f)}$ for a unique $\kappa(\sigma) \in \O^{\times}_{\locKp}$, 
and $\kappa(\sigma)^{\varphi -1}=f^{\prime}(0)^{\sigma -1}$. The map $\kappa: \mathcal{G} \rightarrow \O^{\times}_{\locKp}$ is a $1$-cocycle, i.e., 
$\kappa(\tau \sigma)= \kappa(\sigma)^{\tau} \cdot \kappa(\tau)$ for all $\sigma, \tau \in \mathcal{G}$,
and $\Col_{u}$ and $\Col_{\sigma(u)}$ are related by
\[
\Col_{\sigma(u)}= \Col_{u}^{\sigma} \circ [\kappa(\sigma)]_{f, \sigma(f)}.
\]
\end{lemma}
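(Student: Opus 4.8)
The statement collects several classical facts from de Shalit's theory of relative Lubin--Tate groups (\cite[Ch.~I]{dS87}) that transform correctly under the Galois action. The plan is to first produce the isomorphism $h\colon F_f \simeq F_{\sigma(f)}$ of formal groups, then identify it with a power series $[\kappa(\sigma)]_{f,\sigma(f)}$, derive the cocycle relation, and finally deduce the transformation formula for Coleman power series.

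First I would construct $h$. Applying $\sigma$ to the defining equation $F_f^\varphi\circ f = f\circ F_f$ and using that $\varphi$ commutes with $\sigma$ (both lie in the abelian group $\mathcal{G}$, or rather $\varphi$ acts on coefficients while $\sigma$ acts via its action on $H_\infty'$, and these are compatible), one sees that $\sigma(f) \in \calF_{\xi}$ as well, with the same $\xi$ since $v_H$ is $\sigma$-invariant. Then $\sigma$ maps the torsion group $W_f$ onto $W_{\sigma(f)}$ compatibly with the $\O_H$-module structures (because $\sigma$ commutes with $[a]_f$ up to the twist $[a]_{\sigma(f)}$). Now both $F_f$ and $F_{\sigma(f)}$ are relative Lubin--Tate groups for the same $\xi$; by \cite[Ch.~I, Prop.~1.5]{dS87} there is a unique isomorphism of the form $[a]_{f,\sigma(f)}$ for a suitable $a \in \O_{H'}^\times$ with $a^{\varphi-1} = \sigma(f)'(0)/f'(0) = f'(0)^{\sigma-1}$; one checks directly that this isomorphism sends the torsion point $\omega$ to $\sigma(\omega)$ by comparing the actions on $W_f$ via the explicit reciprocity law $\sigma_u(\omega) = [u^{-1}]_f(\omega)$. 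Uniqueness of an isomorphism with $h(\omega) = \sigma(\omega)$ for all $\omega \in W_f$ follows because $W_f$ is Zariski-dense in the formal group (it accumulates at $0$), so two such power series agreeing on $W_f$ coincide. Set $\kappa(\sigma) := a$; the relation $\kappa(\sigma)^{\varphi-1} = f'(0)^{\sigma-1}$ is then immediate.

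Next, the cocycle relation. For $\sigma,\tau \in \mathcal{G}$ I would compose $h_\tau\colon F_{\sigma(f)} \to F_{\tau\sigma(f)}$ obtained by applying $\tau$ (note $\tau$ carries the "$h$ for $\sigma$" data to the analogous data relative to $\sigma(f)$) with $h_\sigma\colon F_f\to F_{\sigma(f)}$; the composite is an isomorphism $F_f \to F_{\tau\sigma(f)}$ sending $\omega$ to $\tau\sigma(\omega)$, hence by uniqueness equals $h_{\tau\sigma}$. Unwinding this in terms of the power series $[a]_{f,g}$ and using the composition rule for these (the natural multiplicativity $[a]_{\sigma(f),\tau\sigma(f)}\circ[b]_{f,\sigma(f)} = [\tau\sigma\text{-twist}]$, with the Galois twist $\kappa(\sigma)^\tau$ appearing because $\tau$ acts on the coefficients of the middle power series) yields $\kappa(\tau\sigma) = \kappa(\sigma)^\tau\kappa(\tau)$.

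Finally, the formula $\Col_{\sigma(u)} = \Col_u^\sigma \circ [\kappa(\sigma)]_{f,\sigma(f)}$. I would verify it by checking the defining property \eqref{def eq Col} of the Coleman power series: evaluate the right-hand side at $\omega_{i+1}$ (a generator of the Tate module of $F_f$), use $[\kappa(\sigma)]_{f,\sigma(f)}(\omega_{i+1}) = \sigma(\omega_{i+1})$ (this is the defining property of $h$, transported down each level, noting $h$ intertwines the $\varphi^{-(i+1)}$-twisted pictures), then apply $\varphi^{-(i+1)}(\Col_u^\sigma) = \sigma(\varphi^{-(i+1)}\Col_u)$ to get $\sigma(\varphi^{-(i+1)}\Col_u)(\sigma(\omega_{i+1})) = \sigma(\alpha_i) = \sigma(u)_i$. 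By the uniqueness clause in \cite[Ch.~I, Thm~2.2]{dS87}, the right-hand side must be $\Col_{\sigma(u)}$.

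\textbf{Main obstacle.} The genuinely delicate point is bookkeeping the two distinct Galois actions: the Frobenius $\varphi$ acting on power-series coefficients versus $\sigma \in \mathcal{G}$ acting both on $H_\infty'$ and, via restriction, on coefficients in $\O_{H'}$. Keeping these compatible — in particular verifying that $\sigma$ really does send the relative Lubin--Tate group $F_f$ to $F_{\sigma(f)}$ with the correct $\xi$, and correctly tracking the twist $\kappa(\sigma)^\tau$ in the cocycle computation and the appearance of $\varphi^{-(i+1)}$ in the final evaluation — is where all the care is needed; the formal-group and Coleman-operator facts themselves are quoted from \cite{dS87}.
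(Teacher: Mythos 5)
Your proposal correctly reconstructs the de Shalit argument that the paper's proof simply cites (the paper's proof amounts to pointing at \cite[Ch.~I, Cor.~2.3]{dS87} and equation (15) on p.~21 there and remarking that those arguments do not use the splitness of $p$), so you are following the same route, just with the citation unfolded. The one step to tighten is the construction of $h$: the explicit reciprocity law $\sigma_u(\omega) = [u^{-1}]_f(\omega)$ only governs $\sigma$ in the inertia $\Gal(H_\infty'/H')$, so for a general $\sigma \in \mathcal{G} = \Gal(H_\infty'/H)$ (which includes an unramified component) you should instead argue that $\sigma$ and any fixed $[a_0]_{f,\sigma(f)}$ both induce $\O_H$-module isomorphisms $W_f \to W_{\sigma(f)}$ compatible with the formal group structures, so they differ on $W_f$ by $[c]_f$ for a unique $c \in \O_H^\times$ (using $W_f^n \simeq \O_H/\frp_H^n$), which pins down $\kappa(\sigma) = a_0 c$.
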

\begin{proof}
This is a generalization of {\cite[Ch. I, Cor. 2.3]{dS87}} or (15) on page 21 in Chapter I.3 of \cite{dS87}. 
{The arguments used to prove this assertion in \cite{dS87} do not need the assumption 
that $p$ splits in $k/\Qu$ which is assumed in \cite[Ch.~I.3]{dS87}.}
\end{proof}

\subsection{Auxiliary results}\label{aux section}
We continue to use the notation introduced in Section \ref{Rel Lubin-Tate}.

We fix $f \in \calF_\xi$.
For this subsection we fix an integer $m \in \Ze_{>0}$ but usually suppress $m$ in our notations. Let $\pi = \pi_H$ be a
uniformizing element and define
\[
R = R_{H'} := \frac{\O_{H'}[[T]]}{\left( [\pi^{m+1}]_f \right)} = \frac{\O_{H'}[[T]]}{\left( f^{(m+1)} \right)}.
\]
We write $\iota := \iota_{H'}$ for the injective ring homomorphism 
\begin{eqnarray*}
  \iota_{H'} \colon R &\lra& \O_{H'} \oplus \bigoplus_{l=0}^m \O_{H_l'}, \\
\bar{g} &\mapsto& \left( g(0), \left( (\varphi^{-(l+1)}g)(\omega_{l+1}) \right)_{l = 0,\ldots,m}   \right).
\end{eqnarray*}
We let $r=r_{H'} \colon R \lra \O_{H_m'}$ be the composite of $\iota$ and the projection to the last component.

If $F/H'$ is a finite unramified extension, we set $R_F := R \tensor_{\O_{H'}}\O_F$ and $F_l := FH_l'$ and note that $\O_{F_l} = \O_F\O_{H_l'}$.
Let $\iota_F \colon R_F \lra \O_{F} \oplus \bigoplus_{l=0}^m \O_{F_l}$ and $r_F \colon R_F \lra \O_{F_m}$ denote the base change
of $\iota$ and $r$ along $\O_F$ over $\O_{H'}$. Since $\O_{F_m} = \O_F[\omega_{m+1}]$ the ring homomorphism $r_F$ is surjective. In addition,
as it is actually a homomorphism of local rings, we conclude that $r_F \colon R_F^\times \lra \O_{F_m}^\times$ is surjective as well.

The Galois group $\Gal(F_\infty/F)$ naturally acts on $\O_{F} \oplus \bigoplus_{l=0}^m \O_{F_l}$. The following lemma shows that we
can transport this action to $R_F$ via $\iota_F$.

\begin{lemma}\label{Galois action}
 For $\bar{g} \in R_F$ and $\sigma \in \Gal(F_\infty/F)$ the element $\sigma( \iota_F(\bar{g}) )$ is contained in the image $\iota_F(R_F)$.
\end{lemma}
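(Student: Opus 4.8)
The plan is to show that $R_F$ carries a natural $\Gal(F_\infty/F)$-action making $\iota_F$ equivariant, by chasing what each $\sigma$ does to the three types of components. First I would fix $\sigma \in \Gal(F_\infty/F)$ and let $n$ be large enough that $\sigma$ acts trivially on $H'_m$-related data but, more importantly, recall from Lemma \ref{lemma_ColpowerseriesDS87p21} that $\sigma$ is realised on torsion points by an isomorphism $[\kappa(\sigma)]_{f,\sigma(f)} \colon F_f \to F_{\sigma(f)}$ with $\kappa(\sigma) \in \O_{H'}^\times$ (note $F/H'$ unramified, so $\kappa(\sigma)$ lies in $\O_F^\times$ too and $\sigma(f)$ still lies in $\calF_\xi$ over $F$). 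The key observation is that for $\bar g \in R_F$, represented by $g \in \O_F[[T]]$, the composite $g^{\sigma} \circ [\kappa(\sigma)]_{f,\sigma(f)}$ is again a power series in $\O_F[[T]]$, and I claim its image $\overline{g^{\sigma} \circ [\kappa(\sigma)]_{f,\sigma(f)}}$ in $R_F$ (this requires checking $g^\sigma \circ [\kappa(\sigma)]$ is well-defined modulo $(f^{(m+1)})$, which follows because $[\kappa(\sigma)]_{f,\sigma(f)}$ conjugates $f^{(m+1)}$ into $\sigma(f)^{(m+1)}$ up to a unit — here one uses $F_f^\varphi \circ f = f \circ F_f$ and the defining property of $[\kappa(\sigma)]$) is the desired preimage.

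Concretely, the second step is to verify $\iota_F\bigl( \overline{g^\sigma \circ [\kappa(\sigma)]_{f,\sigma(f)}} \bigr) = \sigma\bigl(\iota_F(\bar g)\bigr)$ component by component. For the constant-term component: $(g^\sigma \circ [\kappa(\sigma)])(0) = g^\sigma(0) = \sigma(g(0))$ since $[\kappa(\sigma)]$ has zero constant term and $\sigma$ fixes $\O_F$ only up to its action — actually $g(0) \in \O_F$ and $\sigma$ does act, so this reads $\sigma(g(0))$, matching the first component of $\sigma(\iota_F(\bar g))$. For the $l$-th component, $0 \le l \le m$: one must show $\bigl(\varphi^{-(l+1)}(g^\sigma \circ [\kappa(\sigma)]_{f,\sigma(f)})\bigr)(\omega_{l+1}) = \sigma\bigl( (\varphi^{-(l+1)}g)(\omega_{l+1}) \bigr)$. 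Here I would unwind: $\varphi^{-(l+1)}$ applied to a composite distributes, $\varphi^{-(l+1)}([\kappa(\sigma)]_{f,\sigma(f)}) = [\varphi^{-(l+1)}\kappa(\sigma)]_{\varphi^{-(l+1)}f,\,\varphi^{-(l+1)}\sigma(f)}$, and this isomorphism sends the generator $\omega_{l+1}$ of $\widetilde W^{l+1}_{\varphi^{-(l+1)}f}$ to $\sigma(\omega_{l+1})$ by the characterising property in Lemma \ref{lemma_ColpowerseriesDS87p21} (transported down by $\varphi^{-(l+1)}$); then $(\varphi^{-(l+1)}g^\sigma)(\sigma(\omega_{l+1})) = \sigma\bigl((\varphi^{-(l+1)}g)(\omega_{l+1})\bigr)$ because $\sigma$ commutes with evaluation and with $\varphi^{-(l+1)}$ in the appropriate sense (both act on the relevant field extension, and the $\varphi$-semilinearity of the coefficients is accounted for by the $\varphi^{-(l+1)}$ twist, which is exactly how $\iota$ was defined). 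This is a bookkeeping computation with Frobenius twists.

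The main obstacle I anticipate is precisely keeping the Frobenius twists straight: $\sigma$ lies in $\Gal(F_\infty/F)$ but its action on torsion points is via $[\kappa(\sigma)]$ between \emph{different} formal groups $F_f$ and $F_{\sigma(f)}$, and the maps $\varphi^{-(l+1)}$ in the definition of $\iota_F$ interact with the arithmetic Frobenius appearing in $\kappa(\sigma)^{\varphi-1} = f'(0)^{\sigma-1}$. One has to check compatibility of $\sigma$ with $\varphi$ on the unramified part (they commute on $F_\infty$ since $\Gal$ is abelian, but $\sigma$ fixes $F \supseteq H'$ whereas $\varphi$ moves $H'/H$), and confirm that $\sigma(f) \in \calF_\xi$ uses $\N_{H'/H}(\pi') = \xi$ being $\sigma$-stable because $\sigma$ fixes $H'$. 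A clean way to organise this is to first treat $\sigma \in \Gal(F_\infty/F)$ with $F$ fixed, so that $\kappa(\sigma)$ is genuinely in $\O_F^\times$, reducing everything to the relations already recorded in Lemma \ref{lemma_ColpowerseriesDS87p21} and Proposition \ref{prop_propertiescolemannormop}. Once the componentwise identities are in hand, equivariance of $\iota_F$ and the conclusion $\sigma(\iota_F(\bar g)) \in \iota_F(R_F)$ are immediate.
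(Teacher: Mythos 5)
Your idea — exhibit an explicit preimage of the form $\overline{g\circ(\text{a power series in }\O_F[[T]])}$ and check $\iota_F$-componentwise — is the same as the paper's, and the preimage you propose is in fact the same element once you simplify.  But two things are off.

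First, you never use the hypothesis that $\sigma\in\Gal(F_\infty/F)$.  Since $\sigma$ fixes $F\supseteq H'$, you have $\sigma(f)=f$ and $g^\sigma=g$ immediately, and from $\kappa(\sigma)^{\varphi-1}=f'(0)^{\sigma-1}=1$ you get $\kappa(\sigma)\in\O_H^\times$, not merely $\O_{H'}^\times\subseteq\O_F^\times$ as you say.  Your entire last paragraph about ``different formal groups $F_f$ and $F_{\sigma(f)}$'' is therefore worrying about a situation that does not occur here, and the reduction you suggest at the very end (``treat $\sigma$ with $F$ fixed'') is not a simplification you get to impose — it is given.

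Second, and this is the real gap, the crucial step is your assertion that $[\varphi^{-(l+1)}\kappa(\sigma)]_{\varphi^{-(l+1)}f,\,\varphi^{-(l+1)}\sigma(f)}$ sends $\omega_{l+1}$ to $\sigma(\omega_{l+1})$ ``by the characterising property in Lemma \ref{lemma_ColpowerseriesDS87p21} transported down by $\varphi^{-(l+1)}$.''  That lemma only tells you $[\kappa(\sigma)]_{f,\sigma(f)}(\omega)=\sigma(\omega)$ for $\omega\in W_f$; but $\omega_{l+1}\in\widetilde W^{l+1}_{\varphi^{-(l+1)}f}$, a torsion point of a \emph{different} formal group.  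Applying $\varphi^{-(l+1)}$ to the coefficients of the power series is a formal identity, but there is no ``transport'' of the identity on torsion points: $\omega_{l+1}$ and the elements of $W_f$ all live in the same field $H'_\infty$, and $\varphi^{-(l+1)}(\omega)$ for $\omega\in W_f$ is not an element of $W_{\varphi^{-(l+1)}f}$.  To justify the claim you would apply Lemma \ref{lemma_ColpowerseriesDS87p21} to the formal group $F_{\varphi^{-(l+1)}f}$, obtain its own cocycle $\kappa_{(l)}(\sigma)$, and then show $\kappa_{(l)}(\sigma)=\kappa(\sigma)$ — but the only clean way to see this is via the explicit reciprocity law $\rec_H(u^{-1})(\omega)=[u]_g(\omega)$ for $\omega\in W_g$ and any $g\in\calF_\xi$ (\cite[Ch.~I, Prop.~1.8]{dS87}).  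Once you have that, you never needed $\kappa(\sigma)$ at all: writing $\sigma=\rec_H(u^{-1})$ with $u\in\O_H^\times$, the reciprocity law applied to $g=\varphi^{-(l+1)}f$ gives directly $\sigma(\omega_{l+1})=[u]_f^{\varphi^{-(l+1)}}(\omega_{l+1})$, and since $\sigma$ fixes $\O_F$ and commutes with $\varphi$, the componentwise identity $\sigma(\iota_F(\bar g))=\iota_F(\overline{g\circ[u]_f})$ follows in two lines.  This is the paper's proof; your route through Lemma \ref{lemma_ColpowerseriesDS87p21} is a detour that smuggles in the same input at the unjustified ``transport'' step.
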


\begin{proof}
We identify $\Gal(F_\infty/F)$ and $\Gal(H'_\infty/H')$ via restriction and let
$u \in \O_H^\times$ such that $\sigma = \rec_H(u^{-1})$. Recall that $\omega_{l+1}$ is a torsion point for $F_{\varphi^{-(l+1)}(f)} = F_f^{\varphi^{-(l+1)}}$
and hence, 
\begin{equation}\label{rec eq}
\sigma(\omega_{l+1}) = \rec_H(u^{-1})(\omega_{l+1}) = [u]_f^{\varphi^{-(l+1)}}(\omega_{l+1})
\end{equation}
by \cite[Ch. I, Prop.~1.8]{dS87}.

We thus obtain
\begin{eqnarray*}
  \sigma(\iota_F(\bar{g})) &=& \left( \sigma(g(0)), \left( \sigma( (\varphi^{-(l+1)}g)(\omega_{l+1}) ) \right)_{l = 0,\ldots,m}   \right) \\
                                         &=& \left( g(0), \left(  (\varphi^{-(l+1)}g)([u]_f^{\varphi^{-(l+1)}}(\omega_{l+1}))  \right)_{l = 0,\ldots,m}   \right) \\
                                         &=& \left( g(0), \left(  (\varphi^{-(l+1)}(g \circ [u]_f))(\omega_{l+1})  \right)_{l = 0,\ldots,m}   \right) \\
                                         &=& \iota_F \left( \overline{g \circ [u]_f} \right),
\end{eqnarray*}
where we use (\ref{rec eq}) for the second equality.
\end{proof}

For $\bar{g} \in R_F$ we define $\N_{R_F/\O_F}(\bar{g})$ to be the norm of the $\O_F$-linear endomorphism of $R_F$ given by multiplication by $\bar{g}$.

\begin{lemma}\label{norm on R_F}
Let $\bar{g} \in R_F$. Then:\\
\begin{enumerate}[label=\alph*)]
\item $$\N_{R_F/\O_F}(\bar{g}) = g(0) \prod_{l=0}^m \left(  \N_{F_l/F}((\varphi^{-(l+1)}g) (\omega_{l+1})    \right)^{\varphi^{l+1}}.$$
\item $$\N_{R_F/\O_F}(\bar{g}) = \prod_{\omega \in W_f^{m+1}} g(\omega).$$
\end{enumerate}

\end{lemma}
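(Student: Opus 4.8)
\textbf{Plan for the proof of Lemma \ref{norm on R_F}.}

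The plan is to prove part a) first and then derive part b) from it. For part a), the key observation is that the ring homomorphism $\iota_F \colon R_F \lra \O_F \oplus \bigoplus_{l=0}^m \O_{F_l}$ becomes an isomorphism after inverting $\pi$ (equivalently, after $\tensor_{\O_F} F$), because $f^{(m+1)} = [\pi^{m+1}]_f$ factors over $\Fbar$ (or over a suitable extension) into the distinct linear factors $T - \omega$ for $\omega \in W_f^{m+1}$, and the $\omega_{l+1}$ are representatives of the distinct Galois orbits of these roots. Thus $R_F \tensor_{\O_F} F$ is \'etale over $F$ with components $F$ and $F_l$ for $l = 0, \ldots, m$, and multiplication by $\bar g$ on $R_F \tensor F$ is conjugate (via $\iota_F$) to multiplication by the tuple $(g(0), ((\varphi^{-(l+1)}g)(\omega_{l+1}))_l)$ on the product ring. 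The norm of multiplication by an element of a product of field extensions is the product of the field-theoretic norms of its components, so
\[
\N_{R_F/\O_F}(\bar g) = \N_{F/F}(g(0)) \cdot \prod_{l=0}^m \N_{F_l/F}\bigl( (\varphi^{-(l+1)}g)(\omega_{l+1}) \bigr).
\]
The only subtlety is the twist by $\varphi^{l+1}$: the component of $R_F \tensor F$ at the orbit of $\omega_{l+1}$ is $F(\omega_{l+1})$, and $\omega_{l+1}$ is a torsion point of $F_{\varphi^{-(l+1)}(f)}$, so the natural identification of this component with $F_l = FH_l'$ (which is $F(\omega'_{l+1})$ for a division point $\omega'_{l+1}$ of $F_f$ itself) differs by the Frobenius automorphism $\varphi^{l+1}$; tracking this through the norm accounts for the exponent $\varphi^{l+1}$ in the stated formula. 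I would make this precise by writing $\O_{H_l'} = \O_{H'}[\omega_{l+1}]$, computing the characteristic polynomial of $\omega_{l+1}$, and comparing with the corresponding data over $H_l'$ obtained by applying $\varphi^{-(l+1)}$.

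For part b), I would argue that both sides of a) and b) are multiplicative in $\bar g$ and continuous, so it suffices to check the identity of a) and b) on a generating set, or — more directly — simply to show that the two expressions in a) and b) agree. The quantity $\prod_{\omega \in W_f^{m+1}} g(\omega)$ is manifestly $\Gal(H'_\infty/H')$-invariant (the Galois group permutes $W_f^{m+1}$), hence lies in $\O_{H'}$, and it is the product of $g$ evaluated over all division points of level $\le m+1$. Grouping $W_f^{m+1} = W_f^0 \sqcup \coprod_{l=0}^m \widetilde W_f^{l+1}$ and noting that the Galois orbit of a chosen $\omega'_{l+1} \in \widetilde W_f^{l+1}$ is all of $\widetilde W_f^{l+1}$, the factor coming from level $l+1$ is $\N_{H_l'/H'}(g(\omega'_{l+1}))$; the factor from $W_f^0 = \{0\}$ is $g(0)$. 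Comparing with a): the Frobenius twist $\varphi^{l+1}$ is exactly the discrepancy between evaluating the original series $g$ at a division point $\omega'_{l+1}$ of $F_f$ and evaluating the Frobenius-twisted series $\varphi^{-(l+1)}g$ at the point $\omega_{l+1}$ of $F_{\varphi^{-(l+1)}(f)}$ that the paper has fixed, so the two products coincide termwise. Alternatively, one can prove b) by the classical Coleman-operator identity: reduce to $g$ a polynomial, use that $\prod_{\omega \in W_f^{m+1}} g(T +_f \omega) = (\NC_f^{(m+1)} g)(f^{(m+1)}(T))$ by Proposition \ref{prop_propertiescolemannormop} c), and specialize $T = 0$.

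The main obstacle I anticipate is bookkeeping the Frobenius twists correctly — making sure the exponent $\varphi^{l+1}$ (rather than $\varphi^{-(l+1)}$ or no twist at all) is the one that appears, given the paper's convention that $\omega_{l+1} \in \widetilde W^{l+1}_{\varphi^{-(l+1)}(f)}$ satisfies $(\varphi^{-(l+1)}f)(\omega_{l+1}) = \omega_l$. Everything else (étaleness of $R_F \tensor F$, norm of a product ring being a product of norms, surjectivity of $r_F$) is standard; the relative Lubin-Tate setting forces the twist and is the one place where a naive translation of the split case would go wrong.
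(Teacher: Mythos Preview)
Your overall strategy matches the paper's: use $\iota_F$ to pass to the product $\O_F \oplus \bigoplus_l \O_{F_l}$ and compute the norm componentwise, then for b) undo the Frobenius twist to recover the product over all of $W_f^{m+1}$. Your argument for b) is essentially identical to the paper's (the paper picks $\tau \in \Gal(F_\infty/H)$ with $\tau|_F = \varphi$ and observes $\bigl(\N_{F_l/F}((\varphi^{-(l+1)}g)(\omega_{l+1}))\bigr)^{\tau^{l+1}} = \N_{F_l/F}(g(\tau^{l+1}\omega_{l+1}))$ with $\tau^{l+1}\omega_{l+1} \in \widetilde W_f^{l+1}$, which is exactly your ``discrepancy'' remark).

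Your handling of part a), however, misidentifies the source of the twist. You write that multiplication by $\bar g$ is ``conjugate via $\iota_F$'' to multiplication by $\iota_F(\bar g)$ and then display the \emph{untwisted} formula $\N_{R_F/\O_F}(\bar g) = g(0)\prod_l \N_{F_l/F}\bigl((\varphi^{-(l+1)}g)(\omega_{l+1})\bigr)$ --- but conjugation preserves the $\O_F$-determinant only if the conjugating map is $\O_F$-linear, and $\iota_F$ is \emph{not}: for $a \in \O_F$ one has $\iota_F(a\bar g) = \bigl(ag(0),(\varphi^{-(l+1)}(a)\cdot(\varphi^{-(l+1)}g)(\omega_{l+1}))_l\bigr)$, so $\iota_F$ twists the $\O_F$-action on the $l$-th factor by $\varphi^{-(l+1)}$. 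The twist therefore does \emph{not} come from identifying $F(\omega_{l+1})$ with $F_l$ --- these are the same field, since $H'(W^{l+1}_{\varphi^{-(l+1)}(f)}) = H'(W^{l+1}_f) = H_l'$ --- but from the definition of $\iota_F$ itself. The paper makes this precise by endowing the target with the twisted $\O_F$-structure $a * (\beta_{-1},\ldots,\beta_m) := (a\beta_{-1}, \varphi^{-1}(a)\beta_0,\ldots,\varphi^{-(m+1)}(a)\beta_m)$, with respect to which $\iota_F$ \emph{is} $\O_F$-linear; one then checks directly on a basis that the resulting norm $\N^*_{F_l/F}$ satisfies $\N^*_{F_l/F}(\beta) = \N_{F_l/F}(\beta)^{\varphi^{l+1}}$, which produces the exponent in the statement. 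Once you locate the twist in $\iota_F$ rather than in any field identification, the bookkeeping you anticipate as the main obstacle becomes routine.
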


\begin{proof}
 For the proof of a) we first define a modified $\O_F$-module structure on $\O_{F} \oplus \bigoplus_{l=0}^m \O_{F_l}$ by
\[
a * (\beta_{-1}, \beta_0, \ldots, \beta_m) := (a\beta_{-1}, \varphi^{-1}(a)\beta_0, \ldots, \varphi^{-(m+1)}(a)\beta_m)
\]
for $a, \beta_{-1} \in \O_F$ and $\beta_l \in F_{l}$, $l \in \{0,\ldots,m\}$. With respect to this $\O_F$-module structure
$\iota_F$ is a homomorphism of $\O_F$-modules.

In the same way we define a new $\O_F$-module structure on each of the fields $F_{l}$ for $l\in~\{0,\ldots,m\}$. Explicitly, $a*\alpha := \varphi^{-(l+1)}(a)\alpha$ for $a \in \O_F$ and $\alpha \in \O_{F_{l}}$.

We fix $l \in \{0, \ldots, m\}$ and let $\alpha_1, \ldots, \alpha_s$ with $s := [F_l:F]$ denote an $\O_F$-basis of $\O_{F_l}$ with respect
to the usual $\O_F$-module structure given by multiplication. Then, for $\beta \in \O_{F_l}$ and $i \in \{1,\ldots,s\}$, there exist elements
$a_{ij} \in \O_F$ such that
\begin{equation}\label{norm eq}
  \beta\alpha_i = \sum_{j=1}^s a_{ij}\alpha_j =  \sum_{j=1}^s \varphi^{l+1}(a_{ij}) * \alpha_j.
\end{equation}
Multiplication by $\beta$ is $\O_F$-linear with respect to both $\O_F$-module structures on $\O_{F_l}$ and we write
$\N_{F_l/F}$, respectively $\N_{F_l/F}^*$, for the induced norm maps. Then (\ref{norm eq}) implies
\[
\N_{F_l/F} (\beta)^{\varphi^{l+1}} = \det\left(\left( a_{ij} \right)_{i,j = 1,\ldots,s}\right)^{\varphi^{l+1}} = \N_{F_l/F}^* (\beta).
\]
Hence  part a) of  the lemma follows from 
\[
\N_{R_F/\O_F}(\bar{g}) = g(0) \prod_{l=0}^m \left(  \N_{F_l/F}^*((\varphi^{-(l+1)}g) (\omega_{l+1}) )   \right),
\]
which in turn is immediate from $\O_F$-linearity of $\iota_F$ with respect to the modified $\O_F$-module structure.

In order to prove b) we fix an element $\tau \in \Gal(F_\infty/H)$ such that $\tau|_{F} = \varphi$. Then we obtain from a)
\[
\N_{R_F/\O_F}(\bar{g}) = g(0) \prod_{l=0}^m \left(  \N_{F_l/F}((\varphi^{-(l+1)}g) (\omega_{l+1})    \right)^{\tau^{l+1}}
                                       = g(0) \prod_{l=0}^m \N_{F_l/F}( g (\omega_{l+1}^{\tau^{l+1}})).
\]
Since $\Gal(F_l/F)$ acts simply transitive on $\widetilde{W_f^{l+1}}$ the result easily follows.
\end{proof}

\subsection{Seiriki's theorem on the constant term of a Coleman power series}
We let $F_f$ be a Lubin-Tate formal group relative to the unramified extension $H'/H$ and resume the 
notations of Section \ref{Rel Lubin-Tate}. Recall that $H_n' = H'(W_f^{n+1})$ for $n \ge 0$. We also set
$H_\infty' := \cup_{n \ge 0}H_n'$. If $\chi$ is a character of finite order of $\Gal(H_\infty' / H')$ we set $H_\chi' := (H_\infty')^{\ker(\chi)}$.
For a norm-coherent sequence \hbox{$u = (u_n)_{n \ge 0} \in \projlim{n} \O_{H_n'}^\times$} we set $u_{H'} := \N_{H_n'/H'}(u_n)$ for any
$n \ge 0$.

\begin{theorem}\label{Seiriki's theorem}
  Let $\chi$ be a character of finite order of $\Gal(H_\infty' / H')$. Let $u = (u_n)_{n \ge 0} \in \projlim{n} \O_{H_n'}^\times$
be a norm-coherent sequence with $u_{H'} = 1$. In addition, we assume $\Col_u(0) \in \O_H^\times$. Then
\begin{equation}\label{Sei main eq}
  (u, \chi)_{H_\infty'/H'} = - \chi(\rec_H(\Col_u(0))).
\end{equation}
\end{theorem}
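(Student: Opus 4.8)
The plan is to reduce the identity to a finite level, run the computation inside the semilocal ring $R=R_{H'}=\O_{H'}[[T]]/(f^{(m+1)})$ of Section~\ref{aux section}, and match the resulting valuation with the right hand side via the explicit local class field theory recalled there.

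\textbf{Reduction to a finite level.} Fix $m\ge 0$ large enough that $H_\chi'\subseteq H_m'$ and put $N:=H_m'$. Since $u_{H'}=\N_{H_m'/H'}(u_m)=1$, the unit $u_m$ lies in $U_{H_m'/H'}=\ker(\N_{H_m'/H'})$, so the finite pairing of Definition~\ref{def_pairing} is defined at $u_m$, and by the compatibility recorded right after Definition~\ref{def_pairing} together with Definition~\ref{def_pairing_inf} one has $(u,\chi)_{H_\infty'/H'}=(u_m,\chi)_{H_m'/H'}$. It therefore suffices to prove the equality at level $m$. I would also observe that both sides of \eqref{Sei main eq} are homomorphisms in $u$ — the pairing is multiplicative in its first argument, $\alpha\mapsto\Col_\alpha$ is multiplicative, and $\rec_H$ is a homomorphism — and, by Proposition~\ref{pairing mult} and $\Q/\Z$-linearity of characters, additive in $\chi$; this permits reducing, where convenient, to the case that $\chi$ has exact conductor $m+1$.

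\textbf{The constant term as a semilocal norm.} By the defining property \eqref{def eq Col} of the Coleman power series, $\iota_{H'}(\overline{\Col_u})=(\Col_u(0),(u_l)_{l=0}^m)$ in $\O_{H'}\oplus\bigoplus_{l=0}^m\O_{H_l'}$. Applying the norm $\N_{R_{H'}/\O_{H'}}$ and using part~a) of Lemma~\ref{norm on R_F} together with norm-coherence — so that $\N_{H_l'/H'}(u_l)=u_{H'}=1$ for every $l$ — all factors except the constant one collapse, and one obtains the clean identity $\N_{R_{H'}/\O_{H'}}(\overline{\Col_u})=\Col_u(0)$. This exhibits $\Col_u(0)$ as a norm from the semilocal ring $R_{H'}$ and is the bridge between the two sides of \eqref{Sei main eq}.

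\textbf{Matching with the pairing.} The composite $r_\chi:=\N_{H_m'/H_\chi'}\circ r_{H'}\colon R_{H'}^\times\to\O_{H_\chi'}^\times$ sends $\overline{\Col_u}$ to $\N_{H_m'/H_\chi'}(u_m)=u_\chi$, and it is equivariant for the Galois action on $R_{H'}$ furnished by Lemma~\ref{Galois action} and the genuine Galois action on $\O_{H_\chi'}$ — equivariance of $r_{H'}$ is precisely the computation in the proof of Lemma~\ref{Galois action} (via \eqref{rec eq}), and $\N_{H_m'/H_\chi'}$ is Galois-equivariant. Fixing a generator $\sigma$ of $G_\chi$ with $\chi(\sigma)=1/d_\chi+\Ze$ and a lift $\tilde\sigma\in\Gal(H_m'/H')$, I would then, after base change to a sufficiently large finite unramified extension $F/H'$ (so that $r_F$ is surjective on units and the unit group of the maximal unramified extension is cohomologically trivial, exactly as exploited in the proofs of Propositions~\ref{pairing mult} and \ref{main prop}), produce an element $B\in R_F^\times$ solving $B^{\tilde\sigma-1}\equiv\overline{\Col_u}$ modulo the $\tilde\sigma$-fixed part of $R_F^\times$, and take $b_\chi$ to be the (suitably normalized) image $r_\chi(B)$, which is a Hilbert~90 representative of $u_\chi$. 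Then $(u,\chi)_{H_\infty'/H'}=v_{H'}(b_\chi)+\Ze$ by the definition of the pairing, and the remaining task is to compute $v_{H'}(b_\chi)$. Feeding the norm formula of Lemma~\ref{norm on R_F} into this coboundary decomposition, norm-coherence again annihilates every contribution except the one carried by the $\O_F$-component of the fixed part, namely $\Col_u(0)$; what survives is exactly the exponent, modulo $d_\chi$, by which $\rec_H(\Col_u(0))$ acts on $H_\chi'$ through $\chi$. Translating this into pairing language by Lemma~\ref{exp formula} (applied to the explicit coboundary representation) and using the explicit description of $\rec_H$ on division points recalled before Lemma~\ref{Galois action}, one finds $v_{H'}(b_\chi)\equiv-\chi(\rec_H(\Col_u(0)))\pmod{\Ze}$, which is \eqref{Sei main eq}.

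\textbf{The main obstacle.} The delicate step is the third paragraph: extracting the Hilbert~90 witness $b_\chi$ from the ring $R_F$ with the correct normalization and then carrying out the valuation computation. Since $R_F=\O_F[[T]]/(f^{(m+1)})$ is far from a domain, ``valuations'' only make sense componentwise under the embedding $\iota_F$, and one must keep careful track of the Frobenius twists — recall that $\omega_{l+1}$ is a torsion point of $F_{\varphi^{-(l+1)}(f)}$, not of $F_f$ — as well as of how large the auxiliary unramified extension $F$ must be taken for the relevant Tate cohomology to vanish. This is essentially Solomon's descent argument transported into the setting of relative Lubin–Tate groups. By contrast, the reduction to a finite level, the multiplicativity reductions, and the collapse of the norm-coherent products are routine.
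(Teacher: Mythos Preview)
Your first two paragraphs are fine and coincide with the paper: the reduction to level $m$ and the identity $\N_{R_{H'}/\O_{H'}}(\overline{\Col_u})=\Col_u(0)$ are exactly how the argument is set up there. The gap is in your third paragraph. You propose to solve $B^{\tilde\sigma-1}\equiv\overline{\Col_u}$ in $R_F^\times$ modulo the $\tilde\sigma$-fixed part and then read off $v_{H'}(r_\chi(B))$. But under $\iota_F$ the ring $R_F$ has an $\O_F$-component on which $\tilde\sigma$ acts trivially, so every coboundary $B^{\tilde\sigma-1}$ has constant component $1$, whereas $\overline{\Col_u}$ has constant component $\Col_u(0)$, in general a nontrivial unit. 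The ``fixed-part'' correction you allow therefore swallows $\Col_u(0)$ into an unspecified $\tilde\sigma$-invariant $c$, and at that point the link between $b_\chi=r_\chi(B)$ and $\Col_u(0)$ is severed. The sentence ``what survives is exactly the exponent by which $\rec_H(\Col_u(0))$ acts'' is the entire content of the theorem, and no mechanism for it is given: Lemma~\ref{exp formula} converts valuations into characters (and $\Col_u(0)$ is a unit, valuation $0$), while the formula $\rec_H(v^{-1})(\omega_{l+1})=[v]_f^{\varphi^{-(l+1)}}(\omega_{l+1})$ relates $\rec_H$ to the action on division points, not to the abstract lift $B$.

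The paper closes this gap by a subtraction trick rather than a direct lift. One first \emph{chooses} $b_u\in\O_H^\times$ so that $\chi_j(\rec_H(b_u^{-1}))$ equals the pairing $(u,\chi_j)$ for a set of generating characters $\chi_j$ (pure surjectivity of $\rec_H$), and then manufactures the concrete norm-coherent sequence $u_n':=\rec_H(b_u^{-1})(\omega_{n+1})/\omega_{n+1}$, whose Coleman series is $[b_u]_f(T)/T$ with constant term exactly $b_u$. This is the step where $\rec_H$ actually enters. For $u'':=u/u'$ the statement is now equivalent to $\Col_{u''}(0)\equiv 1\pmod{\frp_H^{m+1}}$, and by construction $(u_m'',\psi)=0$ for \emph{all} $\psi$. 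Only then does the $R_F$-machinery come in: Proposition~\ref{main prop} (which needs the vanishing of all pairings) writes $u_m''$ as $\prod(\beta_i')^{\sigma_i'-1}$ with the $\beta_i'$ \emph{units}, one lifts this product to $R_F$ to get an element of semilocal norm $1$, and a separate argument with the Coleman norm operator $\N_f$ (Proposition~\ref{prop_propertiescolemannormop}~d)) shows that any $\bar x\in R_F^\times$ with $r_F(\bar x)=1$ satisfies $\N_{R_F/\O_F}(\bar x)\equiv 1\pmod{\frp_F^{m+1}}$. Your sketch is missing both ingredients: the auxiliary $u'$ that inserts $\rec_H$, and the use of $\N_f$ that controls the congruence.
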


\begin{remark}{In the case $H=H'$ this is essentially Corollary 2.8 in \cite{Sei15}.
In addition, there is a minus missing in the statement of this corollary.} 
\end{remark}

\begin{remark}\label{independence remark}
\begin{enumerate}[label=\alph*)]
\item The right hand side does not depend on the choice $\omega = (\omega_n)_{n\ge 1}$ of a generator of the Tate module. Indeed, if
$\omega' = (\omega_n')_{n\ge 1}$ is another such generator, then there is a unique $\sigma \in \Gal(H_\infty'/H')$ such that
$\sigma(\omega_n) = \omega_n'$ for all $n \ge 1$. By local class field theory there exists a unique $v \in \O_H^\times$ with $\rec_H(v) = \sigma$.
Then $\omega_n' = \sigma(\omega_n) =  [v^{-1}]_f^{\varphi^{-n}}(\omega_n)$ (since $\omega_n$ is a torsion point of $F_{\varphi^{-n}(f)}$).

If $\Col_u'$ denotes the Coleman power series with respect to $\omega'$, then
$\Col_u' = \Col_u \circ [v]_f$ and thus $\Col_u'(0) = \Col_u(0)$.

\item Without loss of generality we may assume that the sequence $\omega = (\omega_n)_{n \ge 1}$ is norm-coherent. To show
this we apply \cite[Lemma~4.1]{Bl04} and proceed as follows: We fix a norm-coherent sequence $\beta = (\beta_n)_{n \ge 1}$
of prime elements of $H_n'$ and let $\Col_\beta \in T\O_{H'}[[T]]$ be the associated Coleman power series. Let $\Col_\beta^{-1} \in T\O_{H'}[[T]]$
be such that $\Col_\beta \circ \Col_\beta^{-1}  = T$. 
If we set $f' := \Col_\beta^\varphi \circ f \circ \Col_\beta^{-1}$, then $f' \in  \mathcal{F}_\xi$ and the proof of \cite[Lemma~4.1 b)]{Bl04} shows that
$\beta$ is a generator of the Tate module for $F_{f'}$. With respect to $F_{f'}$ and $\beta$ the Coleman power series associated
to $u$ is equal to $\Col_u \circ \Col_\beta^{-1}$ and $(\Col_u \circ \Col_\beta^{-1})(0) = \Col_u(0)$, so that we may prove the theorem
for $f$ replaced by $f'$ and $\omega$ replaced by $\beta$.
\end{enumerate}
\end{remark}

\begin{proof}
We fix $m > 0$ and note that it suffices to prove the theorem for an arbitrary character $\chi$ of $\Gal(H_m'/H')$. We write
$\Gal(H_m'/H')$ as a direct product of cyclic subgroups,
\[
 \Gal(H_m'/H') = G_1 \times \ldots \times G_s,
\]
with $s \ge 1$ and for each $i \in \{ 1, \ldots, s\}$ we set $U_i := \prod_{j \ne i}G_j$ with the subscript $j$ ranging over $\{1,\ldots,s\}$. 
For each $j$ we fix a generator
$\sigma_j$ of $G_j$ and define a character $\chi_j \in \hat{G}_j$ by $\chi_j(\sigma_j) = \frac{1}{|G_j|} + \Ze$. Then the characters
$\chi_1, \ldots, \chi_s$ generate the group of characters of $\Gal(H_m'/H')$.

{\bf Claim 1: } For $j = 1, \ldots, s$ there exist units $b_{u, \chi_j} \in \O_H^\times$ such that
$(u_{\chi_j}, \chi_j)_{H_{\chi_j}'/H'} = \chi_j(\rec_H(b_{u, \chi_j}^{-1}))$ and $\rec_H(b_{u, \chi_j}) \in G_j$.

For the proof of Claim 1 we fix $a_j \in \Ze_{>0}$ such that $(u_{\chi_j}, \chi_j)_{H_{\chi_j}'/H'} =\frac{a_j}{|G_j|} + \Ze$. Since
$\rec_H$ induces an isomorphism 
$\left( \O_H / \frp_H^{m+1} \right)^\times \simeq \Gal(H_m'/H')$ 
there exists $b_{u, \chi_j} \in \O_H^\times$
such that $\rec_H(b_{u, \chi_j}^{-1}) = \sigma_j^{a_j}$. Claim 1 is now immediate from $\chi_j(\sigma_j) = \frac{1}{|G_j|}+\Ze$.

By Remark \ref{independence remark} we may, without loss of generality, assume that the generator $\omega = (\omega_n)_{n \ge 1}$ of
the Tate module is norm coherent. We set $b_u := \prod_{j=1}^s b_{u, \chi_j} \in \O_H^\times$ and define
\[
u_n' := \frac{\rec_H(b_u^{-1})(\omega_{n+1})}{\omega_{n+1}}.
\]
Then $u' := (u_n')_{n \ge 0}$ is a norm coherent sequence of units in $\varprojlim_{n} \O_{H_n'}^\times$.

{\bf Claim 2: } $\Col_{u'}(T) = \frac{[b_u]_f(T)}{T}$, and thus $ \Col_{u'}(0) = b_u$. 

For the proof of Claim 2 we recall that $\omega_{n+1}$ is a torsion point of $F_{\varphi^{-(n+1)}(f)} = F_f^{\varphi^{-(n+1)}}$.
We have $\rec_H(b_u^{-1})(\omega_{n+1}) = [b_u]_{\varphi^{-(n+1)}(f)}(\omega_{n+1}) = [b_u]_{f}^{\varphi^{-(n+1)}}(\omega_{n+1})$.
Put $g :=  \frac{[b_u]_f(T)}{T}$ and observe that
\[
(\varphi^{-(n+1)}(g))(\omega_{n+1}) = \frac{[b_u]_{f}^{\varphi^{-(n+1)}}(\omega_{n+1})}{\omega_{n+1}}
= \frac{\rec_H(b_u^{-1})(\omega_{n+1})}{\omega_{n+1}} = u_n',
\]
so that $g = \frac{[b_u]_f(T)}{T}$ satisfies the defining equality (\ref{def eq Col}) for all $n \ge 0$.

We now set $u'' := u/u'$ and obtain from Claim 2 that $\Col_{u''}(0) \cdot b_u = \Col_u(0)$. In particular, 
since we assume that $\Col_{u}(0)$ is contained in $\O_H^\times$, it follows that $\Col_{u''}(0) \in \O_H^\times$.

The right hand side of (\ref{Sei main eq}) is obviously multiplicative in the character, for the left hand side this
is shown in Proposition \ref{pairing mult}. Thus, it suffices to prove the theorem for each of the characters $\chi_i$,
$i = 1, \ldots, s$. 

We henceforth fix $i \in \{1, \ldots, s\}$. We observe that by Claim 1 
$\chi_i(\rec_H(b_u)) = \chi_i(\rec_H(b_{u, \chi_i}))$ and $(u, \chi_i)_{H_\infty'/H'} =  - \chi_i(\rec_H(b_{u, \chi_i}))$. It is now easy to see
that for $\chi = \chi_i$ the equality (\ref{Sei main eq}) is equivalent to
$ \chi(\rec_H(\Col_{u''}(0))) = 0$.
Since $\chi_i$ is a character of $\Gal(H_m'/H')$ and $\rec_H$ induces an isomorphism \hbox{$\left( \O_H/\frp_H^{m+1} \right)^\times \simeq \Gal(H_m'/H')$} 
it thus suffices to show that
\begin{equation}\label{Sei main eq i}
\Col_{u''}(0) \equiv 1 \pmod{\frp_H^{m+1}}.
\end{equation}

{\bf Claim 3:} $(u_m'', \psi)_{H_m'/H'} = 0$ for all characters $\psi$ of $\Gal(H_m'/H')$.

Because of the multiplicativity result of Proposition \ref{pairing mult} it suffices to show that $(u_m'', \chi_j)_{H_m'/H'} = 0$ for $j = 1, \ldots,s$.
We fix $j$ and compute
\[
(u_m'', \chi_j)_{H_m'/H'} = (u_m, \chi_j)_{H_m'/H'} - (u_m', \chi_j)_{H_m'/H'} = \frac{a_j}{|G_j|} - (u_m', \chi_j)_{H_m'/H'}
\]
with $a_j \in \Ze_{>0}$ as in the proof of Claim 1. 
Hence it suffices to show that  the equality  $(u_m', \chi_j)_{H_m'/H'} = \frac{a_j}{|G_j|} + \Ze$ is valid.
We set $\eta_{m+1} := \N_{H_m'/H_{\chi_j}'}(\omega_{m+1})$ and note that
$H_{\chi_j}' = (H_m')^{U_j}$. By  Claim 1 and its proof this implies
\begin{eqnarray*}
 \N_{H_m'/H_{\chi_j}'}(u_{m}') &=& \frac{\rec_H(b_u^{-1})(\eta_{m+1})}{\eta_{m+1}} =
 \frac{\left( \prod_{l=1}^s\rec_H(b_{u,\chi_l}^{-1}) \right) (\eta_{m+1})}{\eta_{m+1}}\\
 &=& \frac{\rec_H(b_{u,\chi_j}^{-1})(\eta_{m+1})}{\eta_{m+1}}
= \frac{\sigma_j^{a_j}(\eta_{m+1})}{\eta_{m+1}}.
\end{eqnarray*}
By definition of the pairing and Lemma \ref{exp formula} we conclude further
\begin{eqnarray*}
  \left( u_m', \chi_j \right)_{H_m'/H'} &=& \left( \frac{\sigma_j^{a_j}(\eta_{m+1})}{\eta_{m+1}}, \chi_j \right)_{H_{\chi_j}'/H'} \\
&=& v_{H_{\chi_j}'}(\eta_{m+1}) \chi_j(\sigma_j^{a_j}) =  \chi_j(\sigma_j^{a_j}) = \frac{a_j}{|G_j|} + \Ze,
\end{eqnarray*}
as required.

The following two claims now conclude the proof of (\ref{Sei main eq i}), and hence also the proof of Theorem \ref{Seiriki's theorem}.
We will use the notation and results of Section \ref{aux section}.

{\bf Claim 4:} There exists a finite unramified extension $F/H'$ and an element $\widetilde{u_m''} \in R_F^\times$ such that 
$r_F(\widetilde{u_m''}) = u_m''$ and $\N_{R_F/\O_F}(\widetilde{u_m''}) = 1$.

{\bf Claim 5:} For any $\bar{x} \in R_F^\times$ with $r_F(\bar{x}) = 1$ one has $ \N_{R_F/\O_F}(\bar x) \equiv 1 \pmod{\frp_F^{m+1}}$.

Indeed, by Lemma \ref{norm on R_F}, the defining equality (\ref{def eq Col}) 
for Coleman power series and the fact that $\N_{F_l/F}(u_l'') = 1$ for all $l$ we have 
\begin{eqnarray*}
\N_{R_F/\O_F}(\overline{\Col_{u''}}) &=& \Col_{u''}(0) \prod_{l=0}^{m} \left( \N_{F_l/F}\left( (\varphi^{-(l+1)}\Col_{u''})(\omega_{l+1}) \right)\right)^{\varphi^{l+1}}\\
                                                       &=& \Col_{u''}(0) \prod_{l=0}^{m} \left( \N_{F_l/F}(u_l'') \right)^{\varphi^{l+1}}\\                                                       
                                                       &=&  \Col_{u''}(0).
\end{eqnarray*}
Since $r_F(\overline{\Col_{u''}}) = u_m'' = r_F(\widetilde{u_m''})$ we thus conclude from Claims 4 and 5  
that \hbox{$ \Col_{u''}(0) \equiv 1 \pmod{\frp_F^{m+1}}$}. Since $  \Col_{u''}(0) \in \O_H$ and $\frp_F^{m+1} \cap \O_H = \frp_H^{m+1}$ (since $F/H$ is
unramified) the equality (\ref{Sei main eq i}) follows.

For the proof of Claim 4 we first note that by Claim 3 all assumptions of Proposition \ref{main prop} for $u_m''$ and $H_m'/H'$ are satisfied. 
Hence we  conclude that there exists a finite unramified extension $F/H'$, an integer $r > 0$, units $\beta_1, \ldots, \beta_r \in \O_{F_m}^\times$ and
automorphisms $\sigma_1, \ldots, \sigma_r \in \Gal(F_m/F)$ such that
\[
u_m^{\prime \prime} = \prod_{j=1}^r \frac{\sigma_j(\beta_j)}{\beta_j}.
\]
Since we know from Section \ref{aux section} that $r_F \colon R_F^\times \lra \O_F^\times$ is surjective, we can choose 
elements $\widetilde{\beta_j} \in R_F^\times$ such that $r_F(\widetilde{\beta_j}) =\beta_j$. 
Recall also from Lemma \ref{Galois action} that we have a natural action of $\Gal(F_m/F)$ on
$R_F$ and set
\[
 \widetilde{u_m^{\prime \prime}} := \prod_{j=1}^r \frac{\sigma_j(\widetilde{\beta_j)}}{\widetilde{\beta_j}} \in R_F^\times.
\]
So  $\widetilde{u_m^{\prime \prime}}$ is a unit in $R_F$ which by construction and Lemma \ref{norm on R_F} a) satisfies 
$\N_{R_F/\OF}(\widetilde{u_m''}) = 1$.

It finally remains to prove Claim 5. If $r_F(\bar x) = 1$ for a power series $x \in \O_F[[T]]$, 
then it is straightforward to see that $x(\omega) = 1$ for all
torsion points $\omega \in \widetilde{W_f^{m+1}}$. We set $h := \frac{f^{(m+1)}}{f^{(m)}} =  \frac{(\varphi^mf)(f^{(m)})}{f^{(m)}}$.
Then $h(T) = \tilde{\pi}+ h_1(f^{(m)})$ with a power series $h_1 \in T \O_{H'}[[T]]$ and  $\tilde\pi := \varphi^m(\pi')$, a uniformizing element in $H'$. 
The set of zeroes of $h$ is given by $\widetilde{W_f^{m+1}}$, so that a straightforward application of the Weierstrass preparation theorem
shows that $h$ divides $x-1$. We write $x = 1 + hg$ with a power series $g \in \O_F[[T]]$. 

By part a) of Lemma \ref{norm on R_F} we obtain
\begin{eqnarray*}
\N_{R_F/\O_F}(\bar{x}) &=& x(0) \prod_{l=0}^m \N_{F_l/F}\left( (\varphi^{-(l+1)}x)(\omega_{l+1}) \right)^{\varphi^{l+1}} \\
                                      &=&  x(0) \prod_{l=0}^{m-1} \N_{F_l/F}\left( (\varphi^{-(l+1)}x)(\omega_{l+1}) \right)^{\varphi^{l+1}}
\end{eqnarray*}
where the second equality holds because of $(\varphi^{-(m+1)}x)(\omega_{m+1}) = r_F(\bar{x}) = 1$. As in the proof of part b) of Lemma  \ref{norm on R_F} 
we derive
\[
\N_{R_F/\O_F}(\bar{x}) = \prod_{\omega \in W_f^m} x(\omega) = \prod_{\omega \in W_f^m} (1 + g(\omega)h(\omega)).
\]
Since  $h(T) = \tilde{\pi}+ h_1(f^{(m)})$ and $f^{(m)}(\omega) = 0$ for all $\omega \in W_f^m$ we further deduce 
\[
\N_{R_F/\O_F}(\bar{x}) = \prod_{\omega \in W_f^m} (1 + g(\omega)\tilde{\pi}).
\]
Set $j(T):= 1+ g(T) \tilde{\pi} \in \O_{\locF}[[T]]$. We note that $f \in \calF_{\xi_F}$ with $\xi_F := \xi^{[F:H']}$ and with respect to the unramified extension
$F/H$, so that the formal group $F_f$ can also be considered as a Lubin-Tate extension relative to $F/H$.  By 
Proposition \ref{prop_propertiescolemannormop} c) we therefore obtain
\[
\left( (\N^{(m)}_{f}j) \circ f^{(m)} \right) (T) = \prod_{\omega \in W^{m}_{f}} j(T +_{f} \omega).
\]
As a consequence
\[
\N^{(m)}_{f}(j)(0) = \prod_{\omega \in W^{m}_{f}} j(\omega) = \N_{R_{F}/ \O_{F}}(\bar{x}).
\]
Moreover, we have $j \equiv 1 \bmod \frp_{F}$. Applying Proposition \ref{prop_propertiescolemannormop} d) inductively we obtain
$\N_{f}^{(m)}(j) \equiv 1 \bmod \frp^{m+1}_{F}$ and hence $\N_{R_{\locF}/\O_{\locF}}(\bar{x}) \equiv 1 \bmod \frp^{m+1}_{F}$.
\end{proof}

For the proof of Theorem \ref{thm_main} we will need a variant of Theorem \ref{Seiriki's theorem}. Let 
\hbox{$u=(u_{n})_{n \geq 0} \in \varprojlim_{n} \O^{\times}_{H_n'}$} be a norm-coherent sequence.
The proof of \cite[Lemma~4.2]{Bl04} shows that  $\N_{H_n' / H}(u_{n})=1$ for all $n \ge 0$. 
We fix a set of representatives $\{\tau_1, \ldots, \tau_d\}$ of $\Gal(H_\infty'/H)$ modulo  $\Gal(H_\infty'/H')$
and define for all $n \geq 0$
\begin{equation}\label{w def}
\w_{n}:= \prod_{i=1}^{d} \tau_{i}(u_{n}).
\end{equation}
Note that $\w_n$ depends on the choice of the set $\{\tau_1, \ldots, \tau_d\}$, however, we will suppress this dependency in our notation.

\begin{lemma}
\label{lemma_propertiesvandw}
For the elements $\w_n$ constructed above, we obtain
\begin{enumerate}[label=\alph*)]
\item $\N_{H_m' / H_n'}(\w_{m})=\w_{n}$ for $m \geq n \ge 0$.
\item $\N_{H_n' / H'}(\w_{n})=1$ for all $n\geq 0$.
\item $\Col_{\w}(0) = \N_{\locKp / \locL}(\Col_{u}(0))$
\end{enumerate}
\end{lemma}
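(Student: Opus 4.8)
The plan is to verify the three assertions in turn, each being essentially a formal consequence of the definition \eqref{w def} together with the multiplicativity and Galois-equivariance properties of Coleman power series recalled in Section \ref{Rel Lubin-Tate}. For part a), I would observe that $\w_m = \prod_{i=1}^d \tau_i(u_m)$ and that applying $\N_{H_m'/H_n'}$ commutes with each $\tau_i$ after adjusting the coset representatives: more precisely, for fixed $i$ the map $\sigma \mapsto \tau_i \sigma \tau_i^{-1}$ permutes $\Gal(H_m'/H_n')$ (this is a normal subgroup of $\Gal(H_m'/H)$ since $\Gal(H_\infty'/H)$ is abelian — indeed $H_\infty'/H$ is abelian), hence $\N_{H_m'/H_n'}(\tau_i(u_m)) = \tau_i(\N_{H_m'/H_n'}(u_m)) = \tau_i(u_n)$ using norm-coherence of $u$. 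Multiplying over $i$ gives $\N_{H_m'/H_n'}(\w_m) = \w_n$. Part b) is then the special case combined with the remark, recalled just before the lemma, that the proof of \cite[Lemma~4.2]{Bl04} gives $\N_{H_n'/H}(u_n) = 1$; concretely, $\N_{H_n'/H'}(\w_n) = \prod_{i=1}^d \N_{H_n'/H'}(\tau_i(u_n)) = \prod_{i=1}^d \tau_i(\N_{H_n'/H}(u_n))^{?}$ — here one has to be a little careful, so instead I would argue directly: $\N_{H_n'/H'}(\w_n) = \N_{H_n'/H'}\bigl(\prod_i \tau_i(u_n)\bigr)$, and since $\{\tau_i\}$ is a full set of coset representatives for $\Gal(H_\infty'/H')$ in $\Gal(H_\infty'/H)$, the composite $\N_{H_n'/H'} \circ (\text{product of the } \tau_i)$ applied to $u_n$ equals $\N_{H_n'/H}(u_n)$ restricted appropriately, which is $1$.

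For part c), the key input is Lemma \ref{lemma_ColpowerseriesDS87p21}, which gives $\Col_{\sigma(u)} = \Col_u^\sigma \circ [\kappa(\sigma)]_{f,\sigma(f)}$ for each $\sigma \in \mathcal{G} = \Gal(H_\infty'/H)$, together with multiplicativity of $\alpha \mapsto \Col_\alpha$ in $\alpha$. From $\w = \prod_{i=1}^d \tau_i(u)$ and multiplicativity I would get $\Col_\w = \prod_{i=1}^d \Col_{\tau_i(u)} = \prod_{i=1}^d \bigl(\Col_u^{\tau_i} \circ [\kappa(\tau_i)]_{f, \tau_i(f)}\bigr)$. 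Evaluating at $T = 0$ and using that $[\kappa(\tau_i)]_{f,\tau_i(f)}$ has zero constant term (it is a homomorphism of formal groups, so sends $0$ to $0$), one finds $\Col_\w(0) = \prod_{i=1}^d \Col_u^{\tau_i}(0) = \prod_{i=1}^d \tau_i\bigl(\Col_u(0)\bigr)$. Since $\Col_u(0) \in \calO_{H'}$ (it is the constant term of a power series in $\calO_{H'}[[T]]$) and $\{\tau_1,\ldots,\tau_d\}$ is a set of representatives of $\Gal(H_\infty'/H)/\Gal(H_\infty'/H') \cong \Gal(H'/H)$, the product $\prod_{i=1}^d \tau_i(\Col_u(0))$ is precisely $\N_{H'/H}(\Col_u(0))$, which is the claimed identity $\Col_\w(0) = \N_{H'/H}(\Col_u(0))$.

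The main obstacle I anticipate is bookkeeping with coset representatives and the $\varphi$-twists in part c): the power series $\Col_{\tau_i(u)}$ lives over $\tau_i(f)$ rather than $f$, and one must make sure that evaluating the composite at $0$ genuinely kills the cocycle term $[\kappa(\tau_i)]$ and that the restrictions $\tau_i|_{H'}$ really do run over all of $\Gal(H'/H)$ exactly once, so that the product telescopes to the field-theoretic norm and lands back in $\calO_H$. One should also check that $\Col_u^{\tau_i}(0) = \tau_i(\Col_u(0))$ — this is immediate since applying $\tau_i$ to a power series means applying it to the coefficients, and in particular to the constant term. None of these points is deep, but they require the kind of careful tracking of Galois actions on Lubin-Tate towers that is easy to get slightly wrong; everything else is a direct unwinding of definitions.
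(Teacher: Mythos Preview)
Your proposal is correct and follows essentially the same route as the paper: parts a) and b) are dispatched as immediate from the definitions (the paper says only ``immediate from the definitions,'' and your bookkeeping with the abelianness of $\Gal(H_\infty'/H)$ is precisely what makes them so), while for part c) both you and the paper obtain $\Col_\w = \prod_i \Col_{\tau_i(u)}$, invoke Lemma~\ref{lemma_ColpowerseriesDS87p21} to write each factor as $\Col_u^{\tau_i}\circ[\kappa(\tau_i)]_{f,\tau_i(f)}$, evaluate at $T=0$, and identify the product of the $\tau_i(\Col_u(0))$ with the norm $\N_{H'/H}(\Col_u(0))$. The only cosmetic difference is that the paper verifies $\Col_\w = \prod_i \Col_{\tau_i(u)}$ by checking the defining relation~\eqref{def eq Col} directly, whereas you cite multiplicativity of $\alpha\mapsto\Col_\alpha$; both are recorded in Section~\ref{Rel Lubin-Tate} and are equivalent here.
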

\begin{proof}
The proofs of a) and b) are immediate from the definitions. 
Since \[
\varphi^{-(j+1)} \left( \prod_{i=1}^{d} \Col_{\tau_{i}(u)}\right)(\omega_{j+1})= \prod_{i=1}^{d}\tau_{i}(u_{j})=\w_{j}
\]
for all $j \ge 0$
we know by (\ref{def eq Col}) that $\prod_{i=1}^{d} \Col_{\tau_{i}(u)}= \Col_{\w}$. By Lemma \ref{lemma_ColpowerseriesDS87p21} we
have \hbox{$\Col_{\tau_{i}(u)} = \left( \Col_{u}^{\tau_i} \right) \circ [\kappa(\tau_{i})]_{f, \tau_i(f)}$}, and as a consequence 
$ \Col_{\tau_{i}(u)}(0) = \tau_{i}(\Col_{u}(0))$  and so the result of c) obviously follows.
\end{proof}

\begin{corollary}\label{cor_sei}
Let $u=(u_{n})_{n \geq 0} \in \varprojlim_{n} \O^{\times}_{H_n'}$ be a norm-coherent sequence of units.
Then we have for each character $\chi$ of $\Gal(H_\infty'/H)$ of finite order
\[
(u, \chi)_{H_\infty'/H} = -\chi(\rec_H(\N_{H'/H}(\Col_u(0)))).
\]
\end{corollary}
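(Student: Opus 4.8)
The plan is to reduce everything to Theorem~\ref{Seiriki's theorem}, applied not to $u$ itself but to the ``traced'' sequence $w=(w_{n})_{n\ge 0}$, $w_{n}=\prod_{i=1}^{d}\tau_{i}(u_{n})$, of Lemma~\ref{lemma_propertiesvandw}. Since every $u_{n}$ is a unit we have $\Col_{u}\in\calO_{H'}[[T]]^{\times}$, hence $\Col_{u}(0)\in\calO_{H'}^{\times}$; by Lemma~\ref{lemma_propertiesvandw}~c) this gives $\Col_{w}(0)=\N_{H'/H}(\Col_{u}(0))\in\calO_{H}^{\times}$, and by Lemma~\ref{lemma_propertiesvandw}~b) we have $\N_{H_{n}'/H'}(w_{n})=1$. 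Thus $w$ satisfies the hypotheses of Theorem~\ref{Seiriki's theorem}, which yields $(w,\chi')_{H_\infty'/H'}=-\chi'\bigl(\rec_{H}(\N_{H'/H}(\Col_{u}(0)))\bigr)$ for every finite-order character $\chi'$ of $\Gal(H_\infty'/H')$. Because $\N_{H'/H}(\Col_{u}(0))$ is a unit, $\rec_{H}(\N_{H'/H}(\Col_{u}(0)))$ restricts on $H_\infty'$ to an element of the inertia subgroup $\Gal(H_\infty'/H')$, so for a character $\chi$ of $\Gal(H_\infty'/H)$ the right-hand side of the corollary is unchanged if $\chi$ is replaced by $\chi|_{\Gal(H_\infty'/H')}$. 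Consequently the corollary is equivalent to the identity
\[
(u,\chi)_{H_\infty'/H}=(w,\chi|_{\Gal(H_\infty'/H')})_{H_\infty'/H'}\qquad\text{for all finite-order }\chi\in\widehat{\Gal(H_\infty'/H)},
\]
and the rest of the proof establishes this.

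Put $I:=\Gal(H_\infty'/H')$ and $\chi':=\chi|_{I}$. First I would note that both sides of the displayed identity depend only on $\chi'$: the right side manifestly does, and for the left side it suffices, by multiplicativity of the pairing in the character, to check that $(u,\mu)_{H_\infty'/H}=0$ whenever $\mu$ is trivial on $I$. Such a $\mu$ factors through $\Gal(H'/H)$, so $H_{\mu}\subseteq H'$ and $H_{\mu}/H$ is unramified; writing $u_{\mu}:=\N_{H_{0}'/H_{\mu}}(u_{0})$, this is a unit with $\N_{H_{\mu}/H}(u_{\mu})=\N_{H_{0}'/H}(u_{0})=1$, the last equality being the fact recalled before Lemma~\ref{lemma_propertiesvandw}. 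Since the units of a finite unramified extension of local fields form a cohomologically trivial Galois module (\cite[Ch.~V]{Ser79}), the element produced by Hilbert~90 for $u_{\mu}$ may be chosen to be a unit of $H_{\mu}$, whence $(u,\mu)_{H_\infty'/H}=0$. It then remains to prove the identity for one convenient extension $\chi$ of each $\chi'$. For this one passes to a finite level $H_{m}'$ large enough that $H_{\chi}H'\subseteq H_{m}'$; there, by the very definition of the infinite pairing (Definition~\ref{def_pairing_inf}) together with the base-change relation $(a,\psi)_{L/K}=(\N_{L/K_{\psi}}(a),\psi)_{K_{\psi}/K}$, both sides reduce to pairings for the \emph{cyclic} extensions $H_{\chi}/H$ and $H_{\chi}H'/H'$. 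From $w_{m}=\prod_{i}\tau_{i}(u_{m})$ and abelianness of $\Gal(H_{m}'/H)$ one gets $\N_{H_{m}'/H_{\chi}H'}(w_{m})=\prod_{i}\tau_{i}(v)$ with $v:=\N_{H_{m}'/H_{\chi}H'}(u_{m})$, while $\N_{H_{m}'/H_{\chi}}(u_{m})=\N_{H_{\chi}H'/H_{\chi}}(v)$ and $\N_{H_{\chi}H'/H}(v)=\N_{H_{m}'/H}(u_{m})=1$. Since $H'/H$ is unramified, the valuations $v_{H}$ and $v_{H'}$ agree on $(H_{\chi}H')^{\times}$, and after choosing a generator $\sigma$ of $\Gal(H_{\chi}/H)$ with $\chi(\sigma)=\frac{1}{d_{\chi}}+\Ze$ and a compatible generator of $\Gal(H_{\chi}H'/H')$, a direct bookkeeping of the two Hilbert~90 elements identifies the two pairing values in $\Qu/\Ze$.

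I expect this last finite-level comparison to be the main obstacle. It cannot simply be read off Lemma~\ref{lemma_auxlemmatomainlemma}, because $\Gal(H_\infty'/H)$ is in general a \emph{non-split} extension of $\Gal(H'/H)$ by its inertia subgroup $I$, so one genuinely has to argue at finite level and keep careful track of ramification in the tower $H\subseteq H'\subseteq H_{\chi}H'\subseteq H_{m}'$. Once that identity is in hand, letting $m\to\infty$ and combining with the two reductions of the previous paragraph gives $(u,\chi)_{H_\infty'/H}=(w,\chi')_{H_\infty'/H'}=-\chi(\rec_{H}(\N_{H'/H}(\Col_{u}(0))))$, which is the assertion of the corollary.
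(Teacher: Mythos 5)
Your overall strategy coincides with the paper's: apply Theorem~\ref{Seiriki's theorem} to the traced-down sequence $w$ of Lemma~\ref{lemma_propertiesvandw}, observe via Lemma~\ref{lemma_propertiesvandw}~c) that $\Col_w(0)=\N_{H'/H}(\Col_u(0))$, and reduce the corollary to the identity $(u,\chi)_{H_\infty'/H}=(w,\chi|_I)_{H_\infty'/H'}$ with $I:=\Gal(H_\infty'/H')$. The gap is exactly where you flag it: the ``direct bookkeeping of the two Hilbert~90 elements'' is not a routine step, and you neither carry it out nor indicate what makes it go through. Moreover your preliminary reduction (that both sides depend only on $\chi|_I$) quietly uses multiplicativity of $(\cdot,\cdot)_{H_\infty'/H}$ in the character variable for the extension $H_\infty'/H$, which is \emph{not} totally ramified; Proposition~\ref{pairing mult} as proved only covers the totally ramified case, so this step would itself require an argument.

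The paper closes the gap by first \emph{choosing} the transversal $\{\tau_i\}$ that defines $w_n$ in a way adapted to $\chi$. Set $t:=[H_\chi\cap H':H]$, pick $\sigma\in\Gal(H_\infty'/H)$ with $\chi(\sigma)=\tfrac{1}{[H_\chi:H]}+\Ze$, and pick $\tau\in\Gal(H_\infty'/H)$ with $\tau|_{H_\chi}=1$ and $\tau|_{H'}=\varphi^t$. Then $\{\sigma^i\tau^j:0\le i<t,\ 0\le j<d/t\}$ is a transversal of $I$ in $\Gal(H_\infty'/H)$, and $\chi(\sigma^t)=\tfrac{1}{[H_\chi':H']}+\Ze$. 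With $w_n$ defined via this transversal one computes directly, by splitting $\N_{H_n'/H_\chi}$ along the $\tau^j$ and then applying $1+\sigma+\cdots+\sigma^{t-1}$, that if $\tilde\beta\in H_\chi^\times$ satisfies $\tilde\beta^{\sigma-1}=\N_{H_n'/H_\chi}(u_n)$, then $\tilde\beta^{\sigma^t-1}=\N_{H_n'/H_\chi'}(w_n)$. Comparing with the Hilbert~90 element $\beta\in (H_\chi')^\times$ for $w$ gives $\beta/\tilde\beta\in (H')^\times$, and since $H'/H$ is unramified, $v_{H'}(\beta)\equiv v_{H'}(\tilde\beta)=v_H(\tilde\beta)\pmod\Ze$, i.e.\ $(w,\chi)_{H_\infty'/H'}=(u,\chi)_{H_\infty'/H}$. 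Note that this argument establishes the identity for \emph{this specific} $w$; that $(w,\chi')_{H_\infty'/H'}$ is a posteriori independent of the transversal follows only from Theorem~\ref{Seiriki's theorem} (since $\Col_w(0)$ is), and cannot be assumed beforehand. Once you have this transversal trick, your auxiliary reduction to $\chi|_I$-dependence is also unnecessary.
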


\begin{proof}
We set $H_\chi := (H_\infty')^{\ker(\chi)}$ and $H_\chi' :=  (H_\infty')^{\ker(\chi) \cap \Gal(H_\infty'/H')}$. This is summarized in the following diagram of fields.

\[
\begin{tikzcd}[column sep=small, row sep=small]
{} &  & H_{\infty}^{\prime} \arrow[d, no head] \arrow[ddd, "\ker(\chi)", no head, bend left] \\
 &  & H_{\chi}^{\prime} \arrow[dd, no head] \\
H^{\prime} \arrow[dd, "\left\langle \varphi^{t} \right\rangle", no head] \arrow[rru, no head] &  &  \\
 &  & H_{\chi} \arrow[lldd, "\left\langle \sigma\mid_{H_\chi}\right\rangle", no head, bend left] \\
{} \arrow[d, "t", no head] \arrow[rru, no head] &  &  \\
H &  & 
\end{tikzcd}
\]


Choose an element $\sigma \in \Gal(H_\infty'/H)$ such that $\chi(\sigma) = \frac{1}{[H_\chi:H]}+\Ze$. We set $t := [H_\chi \cap H' : H]$ and fix
$\tau \in  \Gal(H_\infty'/H)$ such that $\tau|_{H_\chi} = 1$ and  $\tau|_{H'} = \varphi^t$. Recall that $d = [H' : H]$. It is  
easy to see that the set $\{ \sigma^i \tau^j | 0 \le i <t, 0 \le j < d/t \}$ constitutes a set of representatives of
$\Gal(H_\infty'/H)$ modulo  $\Gal(H_\infty'/H')$. For all $n \ge 0$ we define $\w_n$ as in (\ref{w def}) with respect to this set of representatives.

Then $\chi(\sigma^t) = \frac{1}{[H_\chi':H']}+\Ze$. Let $n$ be large enough so that $H_\chi' \subseteq H_n'$.
By Lemma \ref{lemma_propertiesvandw} b) and Hilbert's Theorem 90 there exists
an element $\beta \in (H_\chi')^\times$ such that $\beta^{\sigma^t-1} = \N_{H_n'/H_\chi'}(\w_n)$. By the definition of the pairing we derive
\begin{equation}\label{nice lemma eq 1}
(\w, \chi)_{H_\infty'/H'} = v_{H'}(\beta).
\end{equation}
If $\tilde\beta \in H_\chi^\times$ is such that  $\tilde\beta^{\sigma-1} = \N_{H_n'/H_\chi}(u_n)$, then 
\begin{equation}\label{nice lemma eq 2}
(u, \chi)_{H_\infty'/H} = v_{H}(\tilde\beta).
\end{equation}
Now we compute
\begin{eqnarray*}
  \tilde\beta^{\sigma^t-1} &=& \tilde\beta^{(\sigma-1)(1+\sigma + \ldots + \sigma^{t-1})} \\
                                        &=& \N_{H_n'/H_\chi}(u_n)^{(1+\sigma + \ldots + \sigma^{t-1})} \\
                                      &=& \left( \prod_{j=0}^{d/t-1}  \tau^j \left( \N_{H_n'/H_\chi'}(u_n)\right) \right)^{(1+\sigma + \ldots + \sigma^{t-1})} \\
                                       &=& \N_{H_n'/H_\chi'}\left( \prod_{j=0}^{{d}/{t}-1}  \prod_{i=0}^{t-1}\sigma^i\tau^j (u_n) \right) \\
                                       &=&  \N_{H_n'/H_\chi'}\left( \w_n \right).
\end{eqnarray*}
It follows that $\beta/\tilde\beta \in (H')^\times$ and hence $v_{H'}(\beta) \equiv v_{H'}(\tilde\beta) \pmod{\Ze}$. Since $H'/H$ is unramified
we obtain furthermore $v_{H'}(\tilde\beta) = v_{H}(\tilde\beta)$, which combined with (\ref{nice lemma eq 1}) and (\ref{nice lemma eq 2})
leads to  $(\w, \chi)_{H_\infty'/H'} =  (u, \chi)_{H_\infty'/H}$. 
The result  is now immediate from Theorem \ref{Seiriki's theorem} 
applied for the norm-coherent sequence $\w$ together with Lemma \ref{lemma_propertiesvandw}, part c).
\end{proof}

\section{Proof of the Main Theorem}

We recall that $F=k(\frf)$ with an ideal $\frf$ such that $\frf_L \mid \frf,\; w(\frf) = 1$ and $\frp \nmid \frf$.
By \cite[Ch.~II, Lemma~1.4]{dS87} there exists an elliptic curve $E$ defined over $F$ with complex multiplication by $\Ok$
and such that $F(E_{tor})$ is an abelian extension of $k$. The associated Gr\"o\ss encharacter is of the form 
$\psi_{E/F} = \varphi \circ \N_{F/k}$ with a {Gr\"o\ss encharacter} $\varphi$ of $k$ of
infinity type $(1,0)$ and conductor $\frf$. 
Note that $E$ has good reduction at all primes of $F$ above $\frp$.

Let $\frP$ be a prime of $F$ above $\frp$. Let $\iota: \Q^{c} \hookrightarrow \Q_{p}^{c}$ be a field embedding defining $\frP$. 
Via $\iota$ we view elements of $\Q^{c}$ as elements of $\Q_{p}^{c}$, but we sometimes omit $\iota$ in our notation. 
Furthermore, for any finite extension $M/k$ we write $\widetilde{M}$ for the completion of $\iota(M)$.

Since $E$ has good reduction at $\frP$, we may and will fix a Weierstrass model over the localization $\O_{F, \frP}$ of $\O_{F}$ at $\frP$ 
such that the associated discriminant $\Delta_{E}$ is a unit in $\O_{F, \frP}$. Replacing $E$ by one of its conjugates, if necessary, 
we may assume that the period lattice associated with the standard invariant differential of our 
fixed Weierstrass model is given by $\Omega\frf$ with $\Omega \in \Ce^\times$.

Let $\hat{E}$ be the one-parameter formal group law of $E$ with respect to the parameter \hbox{$t= -2x/y$}. 
Then $\hat{E}$ is defined over the completion $\O_{\tilde{F}} $ of $\O_{F, \frP}$.
By \cite[Ch.~II, Lemma~1.10]{dS87} this formal group $\hat{E}$ is a relative Lubin-Tate group of height two (since we always assume that $p$ is non-split) 
with respect to the unramified extension $\widetilde{F}/\widetilde{k}$.

For any integral ideal $\frc$ of $k$ we write $E[\frc]$ for the subgroup of $E(\Qu^c)$ annihilated by all elements $\alpha \in \frc$.
From \cite[Ch.~II, Prop.~1.6, Prop.~1.9 (i)]{dS87} we deduce that $F(E[\frp^{n}]) = k(\frf\frp^{n})$ (exponent changed) for all $n \ge 0$.

We set $H := \widetilde{k} = \widetilde{L}$, $H' := \widetilde{F}$ and resume the notation of Section~\ref{Rel Lubin-Tate}. 
In particular, $\hat{E}$ is a Lubin-Tate formal group relative to the unramified extension $H'/H$.
Similarly as in Section~\ref{Rel Lubin-Tate} we let $W^n(\hat{E})$ denote the group of division points of level $n$ in $\hat{E}$. Then
\cite[Ch.~II, Prop.~1.6, Prop.~1.8]{dS87} implies that \hbox{$\widetilde{k(\frf\frp^{n})}= H'(W^{n}(\hat{E})) = H_{n}'$} for $n \ge 0$.

We set $u_{n}:= \iota(\psi(1, \frf\frp^{n+1}, \fra))$ for $n \ge 0$  and get a 
norm-coherent sequence \hbox{$u:= \left( u_{n} \right)_{n=0}^{\infty} \in \varprojlim_{n} \O^{\times}_{H_n'}$} 
with an associated Coleman power series \hbox{$\Col_{u} \in \O_{\locKp}[[T]]$}
depending on a choice of a generator $\omega = (\omega_n)_{n \ge 0}$ of the Tate module of $\hat{E}$. 

As explained in \cite[Ch.~II.4.4]{dS87} or \cite[Sec.~4.3]{Bl04} one can choose a generator of the Tate module of $\hat{E}$ such that
Proposition \ref{prop_Colemanpslocaltoglobal} below holds. { Note that in \cite[Ch.~II.4]{dS87} it is assumed that $p$ is split in $k/\Qu$. However, one can show that this assumption is not needed for proving the following result.}

\begin{proposition}
\label{prop_Colemanpslocaltoglobal}
Let $P(z) \in \C[[z]]$ be the Taylor series expansion of $\psi({\Omega} - z; \Omega\frf , \fra)$ at $z=0$. 
Let $\lambda_{\hat{E}}$ denote the formal logarithm associated with $\hat{E}$ normalized such that $\lambda_{\hat{E}}(0) = 1$.
Then $P(z) \in F[[z]] \subseteq \locKp[[z]]$, and moreover:
\begin{enumerate}[label=\alph*)]
\item $P(\lambda_{\hat{E}}(T)) \in \O_{\locKp}[[T]].$
\item $\Col_{u}(T)=P(\lambda_{\hat{E}}(T)) $.
\item $\Col_{u}(0)= \iota(\psi(1; \frf , \fra))$.
\end{enumerate}
\end{proposition}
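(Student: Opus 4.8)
The plan is to verify the three assertions in sequence, using the explicit description of $\hat{E}$ in terms of the Weierstrass parametrization and the characterizing property (\ref{def eq Col}) of Coleman power series. First I would recall that the standard invariant differential $\omega_E = dx/y$ of the fixed Weierstrass model corresponds, under the exponential map $\exp_L\colon\C/\Omega\frf\xrightarrow{\sim} E(\C)$, to $dz$, and that the formal logarithm $\lambda_{\hat E}$ is precisely the formal integral of $\omega_E$ in the parameter $t=-2x/y$; with the normalization $\lambda_{\hat E}(0)=1$ (i.e.\ $\lambda'_{\hat E}(0)=1$, I read the displayed condition in that sense) this means $z=\Omega\lambda_{\hat E}(T)$ identifies the formal parameter $T$ with the analytic parameter near the origin. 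Since $\psi(\,\cdot\,;\Omega\frf,\fra)$ is a meromorphic function on $\C$ with no zero or pole at $z=\Omega$ (because $\Omega$ lies in the lattice $\Omega\frf$ and $\psi$ is a unit away from torsion of conductor dividing $\frf$, cf.\ the properties in Section~\ref{subsec_ellipticunits}), the Taylor expansion $P(z)$ of $\psi(\Omega-z;\Omega\frf,\fra)$ at $z=0$ is a well-defined element of $\C[[z]]^\times$. That $P(z)\in F[[z]]$ follows from the algebraicity of special values of $\psi$ at torsion points together with the fact that the coefficients of $P$ are, up to powers of a period, values of modular functions rational over $F$; concretely one invokes the rationality statements for $\psi$ (the analogue of \cite[Ch.~II]{dS87}), so $P(z)\in F[[z]]\subseteq \locKp[[z]]$ after applying $\iota$.

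For part a), I would argue that $P(\lambda_{\hat E}(T))$ has $\O_{\locKp}$-integral coefficients by identifying it with $\Col_u$ and invoking that Coleman power series of a norm-coherent sequence of \emph{units} lie in $\O_{\locKp}[[T]]^\times$ (here $\nu(u)=0$). But to avoid circularity it is cleaner to establish b) first and deduce a) from it. So the core is part b): I would show that the power series $g(T):=P(\lambda_{\hat E}(T))$ satisfies the defining interpolation property (\ref{def eq Col}), namely
\[
(\varphi^{-(i+1)}g)(\omega_{i+1}) = u_i = \iota(\psi(1;\frf\frp^{i+1},\fra))
\]
for all $i\ge 0$, where $\omega=(\omega_n)$ is the chosen generator of the Tate module. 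The point is that $\omega_{i+1}$, viewed analytically, corresponds to a division point $z_{i+1}\in \frac{1}{\Omega\frf\frp^{i+1}}\Omega\frf$ representing a generator of $E[\frp^{i+1}]$, and the generator of the Tate module can be chosen (this is exactly the content of \cite[Ch.~II.4.4]{dS87}, adapted to the non-split case) so that $z_{i+1}\equiv \Omega\pmod{\Omega\frf\frp^{i+1}}$ after the appropriate twist by $\varphi^{-(i+1)}$; then $\Omega-z_{i+1}$ is congruent to an element generating the relevant torsion, and evaluating $\psi(\Omega-z;\Omega\frf,\fra)$ there reproduces, via the distribution/norm relations of $\psi$, the value $\psi(1;\frf\frp^{i+1},\fra)$. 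The action of $\varphi^{-(i+1)}$ on coefficients matches the passage from the period lattice of $E$ to that of its conjugate $E^{\varphi^{-(i+1)}}=\hat E^{\varphi^{-(i+1)}}$, which is why the twisted evaluation is the correct normalization. Once (\ref{def eq Col}) is checked, uniqueness of the Coleman power series forces $\Col_u(T)=P(\lambda_{\hat E}(T))$, giving b), and then a) is immediate since $\Col_u\in\O_{\locKp}[[T]]$.

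Finally, part c) is the evaluation at $T=0$: since $\lambda_{\hat E}(0)=0$ we get $\Col_u(0)=P(0)=\psi(\Omega-0;\Omega\frf,\fra)=\psi(\Omega;\Omega\frf,\fra)$, and by the homogeneity property of $\psi$ under scaling of the lattice (the analogue of $\psi(\alpha z;\alpha L,\fra)=\psi(z;L,\fra)$ in \cite[Ch.~II.2]{dS87}, with $\alpha=\Omega^{-1}$) this equals $\psi(1;\frf,\fra)$; applying $\iota$ yields $\Col_u(0)=\iota(\psi(1;\frf,\fra))$. I expect the main obstacle to be the bookkeeping in part b): precisely tracking how the Frobenius twist $\varphi^{-(i+1)}$ interacts with the choice of generator of the Tate module and with the period lattice, and verifying that the chosen $\omega_{i+1}$ really corresponds to the translate by $\Omega$ so that the interpolation values come out as the $u_i$ rather than some Galois conjugate or auxiliary multiple of them. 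This is exactly where the non-split hypothesis (height two) must be accommodated, and where one must check that the arguments of \cite[Ch.~II.4]{dS87}, written under the split assumption, carry over verbatim.
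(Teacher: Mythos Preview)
Your proposal is correct and aligns with the paper's treatment: the paper does not give a self-contained proof but refers to \cite[Ch.~II.4.4]{dS87} and \cite[Sec.~4.3]{Bl04}, noting only that the split hypothesis in \cite{dS87} is not actually needed, and your sketch is precisely an expansion of what those references do---choose the generator $\omega$ of the Tate module so that the interpolation identity (\ref{def eq Col}) reproduces the elliptic units $u_i$, deduce b) by uniqueness of $\Col_u$, obtain a) as a corollary, and get c) from homogeneity of $\psi$. One small point: the identification of the analytic and formal parameters near the origin should read $z=\lambda_{\hat E}(T)$ (not $z=\Omega\lambda_{\hat E}(T)$), since the invariant differential $dx/y$ already corresponds to $dz$ on $\C/\Omega\frf$ under the chosen model; this does not affect the rest of your argument.
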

In order to prove Theorem \ref{thm_main} we will fix $n \ge 0$ and show the congruence of Remark \ref{rem_equivalencemainconj 2}.
We first consider the special case where $L$ is the full decomposition field of $\frp$ in $F/k$.
Let $e$ denote the ramification degree of $H/\Qp$. We set 
\[
\tilde{k} = \tilde{L} = H, \quad \tilde{F} = H', \quad H_\infty' = \widetilde{K_\infty} \text{ and }
\widetilde{K_n} =  H_{s+en}' 
\] 
and obtain a diagram of field extensions of $\tilde{k} = \tilde{L}$ of the same shape as the diagram  after Definition \ref{psi_n def}.

By a slight abuse of notation we write $\Gamma$ (resp. $\Delta$) for the Galois group of $H_\infty'/H_s'$ (resp. $H_\infty' / \widetilde{L_\infty}$).
For $i \in \{1,2\}$ we define a character $\chi_{i,n} \colon \Gal(H_\infty'/H) \lra \Qu/\Ze$ by 
\begin{eqnarray*}
  \chi_{i,n}&\colon& \Gal(H_\infty'/H) = \Delta \times \Gamma \twoheadrightarrow \Gamma \stackrel{\chiell}\lra 1+\frp_H^s \twoheadrightarrow \\ 
&&  \frac{1+\frp_H^s}{1+\frp_H^{s+en}} \stackrel{\log_p}\lra \frac{\frp_H^s}{\frp_H^{s+en}}  \stackrel{\pi_{\omega_j}}\lra \Zp/p^n\Zp \simeq \frac{1}{p^n}\Ze/\Ze.
\end{eqnarray*}

By construction $\ker(\chi_{i,n}) = \Gal(H_\infty'/\widetilde{L_{i,n}})$ and $\chi_{i,n}(\gamma_j) = \frac{1}{[\widetilde{L_{i,n}} : H]} + \Ze = 
 \frac{1}{p^n} + \Ze$.

\begin{remark}\label{local_global_reciprocity}
We emphasize that $\chiell$ arises as the inverse of the projective limit of global Artin isomorphisms  
$\frac{1+\frf\frp^s}{1+\frf\frp^{s+n}} \lra \Gal(k(\frf\frp^{s+n})/k(\frf\frp^{s}))$. The computations in case ($\gamma$) 
of \cite[Ch.~II.4.4.3]{Gras} show that the composite
\[
1+\frp^s\O_H \stackrel{\rec_H}\lra \Gamma \stackrel{\chiell}\lra 1 + \frp^s\O_H
\]
is given by $\alpha \mapsto \alpha^{-1}$.
\end{remark}

By Corollary \ref{cor_sei} we obtain the following equality in $\Qu/\Ze$
\[
(u, \chi_{i,n})_{H_\infty'/H} = - \chi_{i,n}(\rec_H(\N_{H'/H}(\Col_u(0)))).
\]
We first compute the right hand side of this equality. 
By construction of $\chi_{i,n}$, Proposition \ref{prop_Colemanpslocaltoglobal}
and Remark \ref{local_global_reciprocity} we get
\begin{eqnarray}
- \chi_{i,n}(\rec_H(\N_{H'/H}(\Col_u(0)))) &=&  \frac{1}{p^n}\pi_{\omega_j} \left( \log_p(\N_{H'/H}(\Col_u(0))) \right)  \nonumber\\
                                                             &=&  \frac{1}{p^n}\pi_{\omega_j} \left( \log_p(\iota(\N_{F/L}(\psi(1;\frf,\fra))) \right) \label{last but one eq}.
\end{eqnarray}
For the computation of the left hand side we note that $\iota(\beta_{i,n})^{\gamma_j - 1} = \iota(\epsilon_{i,n}) = \N_{{\widetilde{K_{n}}}/\widetilde{L_{i,n}}}(u_{s+n})$,
so that by definition of Seiriki's pairing we obtain
\begin{equation}\label{last eq}
(u, \chi_{i,n})_{H_\infty'/H} = \frac{1}{p^n} v_{\widetilde{L_{i,n}}}(\iota(\beta_{i,n})).
\end{equation}
Combining (\ref{last but one eq}) and (\ref{last eq}) we derive the congruence of Remark \ref{rem_equivalencemainconj 2}. 
This concludes the proof of Theorem \ref{thm_main} in the case that $L$ is the full decomposition field of $\frp$ in $F/k$.

The general case follows from the special case precisely in the same way as in \cite[Sec.~4.3]{Bl04}.

\bibliographystyle{abbrv}

\footnotesize

  Werner Bley, \textsc{Department of Mathematics, Ludwig-Maximilians-Universit\"at M\"unchen}\par\nopagebreak
  \textit{E-mail address: } \texttt{bley@math.lmu.de}

  \medskip

  Martin Hofer, \textsc{Department of Mathematics, Ludwig-Maximilians-Universit\"at M\"unchen}\par\nopagebreak
  \textit{E-mail address: }\texttt{hofer@math.lmu.de}

\end{document}